\numberwithin{equation}{section}
\theoremstyle{definition}
\numberwithin{equation}{section}
\newtheorem{theorem}{\bf Theorem}[section]
\newtheorem{remark}{\bf Remark}[section]
\newtheorem{proposition}{Proposition}[section]
\newtheorem{lemma}{Lemma}[section]
\newtheorem{corollary}{Corollary}[section]
\newtheorem{definition}{Definition}[section]
\newtheoremstyle
    {remarkstyle}
    {}
    {11pt}
    {}
    {}
    {\bfseries}
    {:}
    {     }
    {\thmname{#1} \thmnumber{#2} }
\theoremstyle{remarkstyle}
\begin{document}
\title{On Transient Probabilities of Fractional Birth-Death Process}
\author{Kuldeep Kumar Kataria}
\address{K. K. Kataria, Department of Mathematics,
 Indian Institute of Technology Bhilai, Durg, 491002, INDIA.}
 \email{kuldeepk@iitbhilai.ac.in}
 \author{Pradeep Vishwakarma}
 \address{P. Vishwakarma, Department of Mathematics,
 	Indian Institute of Technology Bhilai, Durg, 491002, INDIA.}
 \email{pradeepv@iitbhilai.ac.in}

\subjclass[2010]{Primary: 60J27; Secondary: 60J80}

\keywords{extinction probability; fractional birth-death process; Adomian decomposition method; transient probabilities; cumulative births; Caputo derivative; Riemann-Liouville integral.}
\date{\today}

\maketitle

\begin{abstract}
	We study a fractional birth-death process with state dependent birth and death rates. It is defined using a system of fractional differential equations that generalizes the classical birth-death process introduced by Feller (1939). We obtain the closed form expressions for its transient probabilities using Adomian decomposition method. In this way, we obtain the unknown transient probabilities for the classical birth-death process (see Feller (1968), p. 454). Its various distributional properties are studied. For the case of linear birth and death rates, the obtained results are verified with the existing results. Also, we discuss the cumulative births in the fractional linear birth-death process. Later, we consider a time-changed linear birth-death process where we discuss the asymptotic behaviour of the distribution function of its extinction time at zero.
\end{abstract}

\section{Introduction}
The birth-death process (BDP) is a classical example of the continuous-time and discrete state-space Markov process which is used to model the growth of population over time.  It was introduced by Feller (1939) and has been used in various fields, such as, physics, ecology, biology, queuing theory, \textit{etc}. For instance, in queuing theory,  the notion of states represent the number of people waiting in a queue in which one person can join or leave the queue at any point of time. In BDP, the rates of transition depend on  state of process at any given time $t\ge0$. Let $\lambda_n,\ n\ge0$ and $\mu_n,\ n\ge1$ be the birth and death rates in BDP, respectively. In an infinitesimal time interval of length $h$ such that $o(h)/h\to0$ as $h\to0$, the probability of a birth  is $\lambda_nh+o(h)$, the probability of a death is $\mu_nh+o(h)$, the probability of no change is $1-(\lambda_n+\mu_n)h+o(h)$ and the probability of any other event is $o(h)$. Recently, a generalization of BDP in which at any instance multiple births or deaths can occur with some positive probability has been studied (see Vishwakarma and Kataria (2024a, 2024b)).
 
Let us assume that there is no possibility of immigration in the BDP  $\{N(t)\}_{t\ge0}$, that is, $\lambda_0=0$. If at time $t=0$ there is exactly one progenitor then the system of differential equations that governs its state probabilities
$
    p(n,t)=\mathrm{Pr}\{N(t)=n\}
$
is given by (see Feller (1968), p. 454):
\begin{equation}\label{bdpequ}
    \mathcal{D}_tp(n,t)=
        -(\lambda_n+\mu_n)p(n,t)+\lambda_{n-1}p(n-1,t)+\mu_{n+1}p(n+1,t),\ n\ge0,
\end{equation}
with initial conditions $p(1,0)=1$ and $p(n,t)=0$ for all $n\ne1$, where $\mathcal{D}_t={\mathrm{d}}/{\mathrm{d}t}$ denotes the first order derivative. Here, $\mu_0=0$ and $p(-1,t)=0$ for all $t\ge0$.

It is observed that for any arbitrary choices of birth and death rates there exist a solution $p(n,t)$ of (\ref{bdpequ}) such that $\sum_{n=0}^{\infty}p(n,t)\leq1$ (see Feller (1968)). However, the closed form expression for its solution is not known yet. The detailed study on the behaviour of solution such that $\sum_{n=0}^{\infty}p(n,t)<1$ has been done in Ledermann and Reuter (1954), Karlin and McGregor (1957a, 1957b). Moreover, if the sequences of birth rates $\{\lambda_n\}_{n\ge1}$ and death rates $\{\mu_n\}_{n\ge1}$ increase sufficiently slowly then the solution is unique and satisfy the regularity condition, that is, $\sum_{n=0}^{\infty}p(n,t)=1$. For more details on the regularity of continuous-time Markov processes, we refer the reader to Resnick (2002). 

In this paper, the transition rates $\lambda_n$'s and $\mu_n$'s are chosen such that the BDP is regular. If the transition rates in BDP are linear, that is, $\lambda_n=n\lambda$ and $\mu_n=n\mu$ then it is called the linear BDP. The explicit expressions for the state probabilities of linear BDP are given in Bailey (1964). The fractional variants of linear BDP was introduced and studied by Orsingher and Polito (2011), where they obtained its transient probabilities in three differents cases of birth and death rates. It is observed that the fractional variant of linear BDP has faster mean growth than the linear BDP. Hence, it provides a better mathematical model for a system governed by a BDP under accelerating conditions. Some integrals for the linear case are discussed in Vishwakarma and Kataria (2024c). 

In this paper, we consider a time-fractional variants of the BDP. We denote it by $\{N^\alpha(t)\}_{t\ge0}$, $0<\alpha\leq1$ and call it the fractional birth-death process (FBDP). It is a process whose state probabilities $p^\alpha(n,t)=\mathrm{Pr}\{N^\alpha(t)=n\}$, $n\ge0$ solve the following system of fractional differential equations: 
\begin{equation}\label{diffequ}
	\mathcal{D}^\alpha_tp^\alpha(n,t)=
		-(\lambda_n+\mu_n)p^\alpha(n,t)+\lambda_{n-1}p^\alpha(n-1,t)+\mu_{n+1}p^\alpha(n+1,t),\ n\ge0,
\end{equation}
with initial conditions
\begin{equation}\label{initial}
	p^\alpha(n,0)=\begin{cases}
		1,\ n=1,\\
		0,\ n\ne1,
	\end{cases}
\end{equation}
where the derivative involved in (\ref{diffequ}) is the Caputo fractional derivative defined by (Kilbas \textit{et al.} (2006))
\begin{equation}\label{caputoder}
	\mathcal{D}_t^\alpha f(t)=\begin{cases}
		\frac{1}{\Gamma(1-\alpha)}\int_{0}^{t}\frac{f'(s)\,\mathrm{d}s}{(t-s)^\alpha},\ 0<\alpha<1,\\
		f'(t),\ \alpha=1,
	\end{cases}
\end{equation} 
for a suitable choice of $f(\cdot)$. Its Laplace transform is given by
\begin{equation}\label{frderlap}
	\int_{0}^{\infty}e^{-wt}	\mathcal{D}_t^\alpha f(t)\,\mathrm{d}t=w^{\alpha}\int_{0}^{\infty}e^{-wt}f(t)\,\mathrm{d}t-w^{\alpha-1}f(0),\ w>0.
\end{equation}

To the best of our knowledge, the solution for system of equations given in (\ref{diffequ}) is not known for arbitrary choices of $\lambda_n$'s and $\mu_n$'s, even in the specific case of $\alpha = 1$, where it reduces to the system of equations (\ref{bdpequ}) corresponding to BDP. In this study, the Adomian decomposition method (ADM) is utilize to solve (\ref{diffequ}). We obtain the series expressions for transient probabilities in FBDP. For any process, the transient probabilities and associated moments are important statistical quantities for practical purposes. For example, in an epidemic model, one of the most important quantity of interest is the expected number of individuals infected by a disease up to some finite time, and to determine this, we require these probabilities.

This paper is organized as follows:

In the following section, we collect some definitions and known results.

In Section \ref{sec3}, we define the FBDP whose state probabilities solve a system of fractional differential equations. We establish a characterization for FBDP as a time-changed BDP. We use the ADM to solve the governing system of differential equations for FBDP, and derive a closed form expressions of its state probabilities. Also, some of its other distributional properties are studied in detail.

In Section \ref{sec4}, we obtain additional properties of the FBDP with linear birth and death rates. We call it the linear FBDP which was introduced and studied by Orsingher and Polito (2011). We obtain the series representations for its state probabilities. Later, we consider the cumulative births and study its joint distributional properties with the linear FBDP.

In the last section, we consider a time-changed BDP where the time changes according to an  inverse subordinator. It reduces to the FBDP as a particular case. In the case of linear birth and death rates, we discuss some asymptotic results at zero for the distribution function of its extinction time. 
\section{Preliminaries}\label{pre}
Here, we collect some definitions and results related to subordinator and its inverse. 
\subsection{Subordinator and its first hitting time}\label{subdef}
	A subordinator $\{S^\phi(t)\}_{t\ge0}$ is a non-decreasing L\'evy process with state space $([0,\infty), \mathcal{B}[0,\infty))$ whose Laplace transform is given by (see Applebaum (2004))
	\begin{equation*}
		\mathbb{E}e^{-\eta S^\phi(t)}=e^{-t\phi(\eta)},\ \eta>0.
	\end{equation*} 
	Here, $\phi(\eta)$ is a Bern\v stein function 
	\begin{equation}\label{lapexp}
		\phi(\eta)=\int_{0}^{\infty}(1-e^{-\eta x})\nu(\mathrm{d}x),
	\end{equation}
	where  $\nu(\cdot)$ is the L\'evy measure of $S^\phi(t)$. So,	$
		\nu(-\infty, 0)=0$ and $\int_{0}^{\infty}\min\{x,1\}\nu(\mathrm{d}x)<\infty.
	$
	The function $\phi(\eta)$ is known as the Laplace exponent of the corresponding subordinator.
	In particular, if $\phi(\eta)=\eta^\alpha$, $\alpha\in(0,1)$ then it is called an $\alpha$-stable subordinator, and we denote it by $\{S^\alpha(t)\}_{t\ge0}$.

 The first passage time process $\{E^\phi(t)\}_{t\ge0}$ defined by 
	\begin{equation}\label{invsub}
		E^\phi(t)=\inf\{u\ge0:S^\phi(u)>t\}
	\end{equation}
	is called the inverse subordinator. The first hitting time of an $\alpha$-stable subordinator$\{S^\alpha(t)\}_{t\ge0}$, $\alpha\in(0,1)$ is known as the inverse $\alpha$-stable subordinator, we denote it by $\{E^\alpha(t)\}_{t\ge0}$.	The Laplace transform of its density function is given by (see Meerscheart \textit{et al.} (2011)) 
	\begin{equation}\label{ainvsublap}
		\int_{0}^{\infty}e^{-w t}\mathrm{Pr}\{E^\alpha(t)\in\mathrm{d}x\}\,\mathrm{d}t=w^{\alpha-1}e^{-w^\alpha x}\,\mathrm{d}x,\ w>0.
	\end{equation}
	
	Let $h(x,t)=\mathrm{Pr}\{E^\phi(t)\in\mathrm{d}x\}/\mathrm{d}x$ be the density of inverse subordinator defined in (\ref{invsub}). It solves the following generalized fractional Cauchy problem (see Kolokoltsov (2009)):
	\begin{equation}\label{invsubeq}
		\begin{cases}
			\mathscr{D}_th(x,t)=-\mathcal{D}_xh(x,t),\\
			h(0,t)=\int_{t}^{\infty}\nu(\mathrm{d}x),
		\end{cases}
	\end{equation}
	where $\mathscr{D}_t$ is the generalized fractional differential operator defined as
	\begin{equation}\label{gfrder}
		\mathscr{D}_tf(t)=\int_{0}^{\infty}(f(t)-f(t-x))\nu(\mathrm{d}x)
	\end{equation}
	for a suitable function $f(\cdot)$, and $\nu(\cdot)$ is the L\'evy measure of the associated subordinator.
	
	In particular, for the inverse $\alpha$-stable subordinator $\{E^\alpha(t)\}_{t\ge0}$, $\alpha\in(0,1)$, the operator $\mathscr{D}_t$ reduces to the Caputo fractional derivative as defined in (\ref{caputoder}).

Next, we give a brief introduction of the Adomian decomposition method.
\subsection{Adomian decomposition method (ADM)}
Let $Q$ be a nonlinear operator and $\phi$ be a known function, and let us consider the following functional equation:
 \begin{equation}\label{admdef}
 	u=\phi+Q(u),
 \end{equation}
 where it is assumed that the solution $u$ of (\ref{admdef}) and  term $Q(u)$ can be expressed as absolutely convergent series $u=\sum_{n=0}^{\infty}u_n$ and $Q(u)=\sum_{n=0}^{\infty}A_n(u_0,u_1,\ldots,u_n)$, respectively (see Adomian (1986), (1994)). Thus, (\ref{admdef}) can be rewritten as follows:
 \begin{equation*}
 \sum_{n=0}^{\infty}u_n=\phi + \sum_{n=0}^{\infty}A_n(u_0,u_1,\ldots ,u_n).
 \end{equation*}
 Here, $A_n$ is known as the $n$th Adomian polynomial in terms of $u_0,u_1,\ldots,u_n$.
 In ADM, the series components $u_n$'s are obtained by using the following recursive relation:
 \begin{equation*}
 	u_0=\phi\ \ \ \ \mathrm{and}\ \ \ \ u_n=A_{n-1}(u_0,u_1,\ldots,u_{n-1}).
 \end{equation*}
 
 The key aspect of ADM is the calculation of Adomian polynomials. Adomian (1986) introduced a technique to derive these polynomials by parametrizing the solution $u$. Moreover, when $Q$ is a linear operator, such as $Q(u) = u$, the polynomial $A_n$ simplifies to $u_n$. For further details on these polynomials, we refer the reader to Rach (1984), Duan (2010, 2011), Kataria and Vellaisamy (2016).

  It is important to note that there are no nonlinear terms involved in the system of differential equations corresponding to the FBDP. So, the ADM can be effectively use to solve the system of equations (\ref{diffequ}) to obtain its state probabilities.

\section{Fractional birth-death process}\label{sec3}
Orsingher and Polito (2011) studied the FBDP for linear birth and death rates, that is, $\lambda_n=n\lambda$ and $\mu_n=n\mu$ for all $n\ge0$, where $\lambda$ and $\mu$ are positive constants. Its transient probabilities are obtained using the state probabilities of linear BDP and a time-changed  representation of the linear FBDP in terms of the linear BDP. It is established using the governing differential equation of the probability generating function (pgf) of linear FBDP.

First, we show that a similar time-changed relationship holds for the FBDP. However, it is not possible to find the governing equation for the pgf of FBDP. So, the method used by Orsingher and Polito (2011) is not applicable in this case.  We use a different method to established a time-changed representation of FBDP. Also, the method used is applicable for the linear birth and death rates too. 

\begin{theorem}\label{thmsub}
	Let $\{T^{2\alpha}(t)\}_{t\ge0}$, $0<\alpha\leq1$ be a random process independent of the BDP $\{N(t)\}_{t\ge0}$, whose density function $\mathrm{Pr}\{T^{2\alpha}(t)\in\mathrm{d}x\}/\mathrm{d}x$ is the folded solution of the following fractional Couchy problem:
	\begin{equation*}
		\mathcal{D}_t^{2\alpha}h^\alpha(x,t)=\mathcal{D}^2_xh^\alpha(x,t),\ 0<\alpha\leq1,\,x\in\mathbb{R},\,t\ge0,
	\end{equation*}
	with initial condition $h^\alpha(x,0)=\delta(x)$ for $0<\alpha\leq1$, and also $h^\alpha_t(x,0)=0$ when $1/2<\alpha\leq1$. Here, $\delta(\cdot)$ is the Dirac delta function and $\mathcal{D}_x^2$ denotes the second order derivative with respect to $x$. Then, FBDP has the following characterization:
	\begin{equation}\label{subr}
		N^\alpha(t)\overset{d}{=}N(T^{2\alpha}(t)),\ t\ge0,\ 0<\alpha\leq1,
	\end{equation}
	where $\overset{d}{=}$ denotes the equality in distribution.
\end{theorem}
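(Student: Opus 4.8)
The plan is to establish the distributional identity (\ref{subr}) by showing that the subordinated probabilities
\[
\bar p(n,t):=\mathrm{Pr}\{N(T^{2\alpha}(t))=n\}=\int_{0}^{\infty}p(n,x)\,g(x,t)\,\mathrm{d}x,\qquad g(x,t):=\mathrm{Pr}\{T^{2\alpha}(t)\in\mathrm{d}x\}/\mathrm{d}x,
\]
solve the governing system (\ref{diffequ}) with the initial condition (\ref{initial}), where $p(n,x)$ denotes the BDP state probabilities solving (\ref{bdpequ}) with $\mathcal{D}_x=\mathrm{d}/\mathrm{d}x$. Since the rates are chosen so that the BDP, and hence the FBDP, is regular, the solution of (\ref{diffequ})--(\ref{initial}) is unique, so this identification gives $\bar p(n,t)=p^\alpha(n,t)$ and therefore (\ref{subr}). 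Because every factor under the integral is nonnegative and $p(\cdot,x)$ is a probability mass function, Tonelli's theorem legitimises all interchanges of $\sum_n$, $\int\mathrm{d}x$ and the $t$-Laplace transform used below.

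First I would compute the $t$-Laplace transform of $g$. Taking the Laplace transform in $t$ of the folded Cauchy problem and using the Laplace rule for the Caputo derivative of order $2\alpha$ (namely $w^{2\alpha}\tilde f(w)-w^{2\alpha-1}f(0)$ for $0<2\alpha\le1$, and $w^{2\alpha}\tilde f(w)-w^{2\alpha-1}f(0)-w^{2\alpha-2}f'(0)$ for $1<2\alpha\le2$), the prescribed data $h^\alpha(x,0)=\delta(x)$ and, when $1/2<\alpha\le1$, $h^\alpha_t(x,0)=0$ collapse the transform of the left-hand side to the \emph{same} expression $w^{2\alpha}\tilde h^\alpha(x,w)-w^{2\alpha-1}\delta(x)$ in both regimes; this is precisely the role of the extra condition $h^\alpha_t(x,0)=0$. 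Hence $\tilde h^\alpha(\cdot,w)$ solves $\tilde h^\alpha_{xx}-w^{2\alpha}\tilde h^\alpha=-w^{2\alpha-1}\delta(x)$, whose solution decaying at $\pm\infty$ is $\tilde h^\alpha(x,w)=\tfrac12 w^{\alpha-1}e^{-w^\alpha|x|}$. Folding at the origin (using that $h^\alpha$ is even in $x$) yields $\int_{0}^{\infty}e^{-wt}g(x,t)\,\mathrm{d}t=w^{\alpha-1}e^{-w^\alpha x}$, which is exactly the transform (\ref{ainvsublap}); in particular $T^{2\alpha}(t)$ has the law of the inverse $\alpha$-stable subordinator $\{E^\alpha(t)\}_{t\ge0}$.

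The main step is then a verification in Laplace space. Writing $\hat p(n,s)=\int_{0}^{\infty}e^{-sx}p(n,x)\,\mathrm{d}x$, the above and Tonelli give $\int_{0}^{\infty}e^{-wt}\bar p(n,t)\,\mathrm{d}t=w^{\alpha-1}\hat p(n,w^\alpha)=:\hat q(n,w)$. Transforming (\ref{bdpequ}) in $x$ and using $p(n,0)=\delta_{n,1}$ shows $(s+\lambda_n+\mu_n)\hat p(n,s)-\lambda_{n-1}\hat p(n-1,s)-\mu_{n+1}\hat p(n+1,s)=\delta_{n,1}$. Setting $s=w^\alpha$ and multiplying by $w^{\alpha-1}$ gives $(w^\alpha+\lambda_n+\mu_n)\hat q(n,w)-\lambda_{n-1}\hat q(n-1,w)-\mu_{n+1}\hat q(n+1,w)=w^{\alpha-1}\delta_{n,1}$, which is exactly the $t$-Laplace transform of (\ref{diffequ})--(\ref{initial}) obtained from (\ref{frderlap}) with $p^\alpha(n,0)=\delta_{n,1}$. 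By uniqueness of the solution of this transformed infinite linear system (again a consequence of regularity) and of Laplace inversion, $\bar p(n,t)=p^\alpha(n,t)$ for all $n\ge0$ and $t\ge0$, which proves (\ref{subr}).

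I expect the main difficulty to be the bookkeeping of the initial data rather than any hard estimate. If one argues directly in the time domain—applying $\mathcal{D}_t^\alpha$ to $\bar p(n,t)$, using $\mathcal{D}_t^\alpha g=-\mathcal{D}_x g$ for $x>0$, and integrating by parts against (\ref{bdpequ})—an apparent spurious source term $\delta_{n,1}\,t^{-\alpha}/\Gamma(1-\alpha)$ arises from the boundary value $g(0,t)=t^{-\alpha}/\Gamma(1-\alpha)$ paired with $p(1,0)=1$; it is cancelled exactly by the discrepancy between the Riemann--Liouville and Caputo derivatives of $\bar p$, since $\bar p(n,0)=\delta_{n,1}\ne0$. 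The Laplace-transform route circumvents this delicate cancellation, which is why I would adopt it; the only remaining care is to justify the term-by-term transforms via nonnegativity (Tonelli) and to invoke regularity for the uniqueness of the transformed system.
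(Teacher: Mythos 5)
Your proposal is correct and follows essentially the same route as the paper's proof: both identify the Laplace transform of the subordinated probabilities via (\ref{diffslap}), transform the BDP system (\ref{bdpequ}), substitute $w\mapsto w^\alpha$ and multiply by $w^{\alpha-1}$ to recover the Laplace-transformed Caputo system (\ref{frderlap}), and conclude by uniqueness. The only difference is that you derive the transform $w^{\alpha-1}e^{-w^\alpha x}$ of the density of $T^{2\alpha}(t)$ directly from the fractional Cauchy problem, whereas the paper cites it from Orsingher and Beghin (2004), Eq.~(3.3); this makes your argument slightly more self-contained but does not change the method.
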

\begin{proof}
	It is sufficient to show that $\mathrm{Pr}\{N(T^{2\alpha}(t))=n\}$, $n\ge0$ solves (\ref{diffequ}). Note that
	\begin{equation*}
		\mathrm{Pr}\{N(T^{2\alpha}(t))=n\}=\int_{0}^{\infty}\mathrm{Pr}\{N(x)=n\}\mathrm{Pr}\{T^{2\alpha}(t)\in\mathrm{d}x\},\ n\ge0
	\end{equation*}
	and its Laplace transform is given by
	\begin{equation}\label{fbdppmflap}
		\int_{0}^{\infty}e^{-wt}\mathrm{Pr}\{N(T^{2\alpha}(t))=n\}\,\mathrm{d}t=w^{\alpha-1}\int_{0}^{\infty}p(n,x)e^{-w^\alpha x}\,\mathrm{d}x,\ n\ge0,\ w>0.
	\end{equation}
	Here, we have used the following result (see Orsingher and Beghin (2004), Eq. (3.3) for $c=1$):
	\begin{equation}\label{diffslap}
		\int_{0}^{\infty}e^{-wt}\mathrm{Pr}\{T^{2\alpha}(t)\in\mathrm{d}x\}\,\mathrm{d}t=w^{\alpha-1}e^{-w^\alpha x}\,\mathrm{d}x,\ w>0.
	\end{equation}	
	On taking the Laplace transform on both sides of (\ref{bdpequ}), we have
	\begin{equation*}
		w\int_{0}^{\infty}e^{-wt}p(n,t)\,\mathrm{d}t-p(n,0)=\int_{0}^{\infty}e^{-wt}(-(\lambda_n+\mu_n)p(n,t)+\lambda_{n-1}p(n-1,t)+\mu_{n+1}p(n+1,t))\,\mathrm{d}t.
	\end{equation*}
	So,
	\begin{multline}\label{bdppmflap}
		w^\alpha w^{\alpha-1}\int_{0}^{\infty}e^{-w^\alpha t}p(n,t)\,\mathrm{d}t-p(n,0)w^{\alpha-1}\\=w^{\alpha-1}\int_{0}^{\infty}e^{-w^\alpha t}(-(\lambda_n+\mu_n)p(n,t)+\lambda_{n-1}p(n-1,t)+\mu_{n+1}p(n+1,t))\,\mathrm{d}t.
	\end{multline}
	On substituting (\ref{fbdppmflap}) in (\ref{bdppmflap}), we get
	\begin{align}\label{pf1}
		w^\alpha\int_{0}^{\infty}&e^{-wt}\mathrm{Pr}\{N(T^{2\alpha}(t))=n\}\,\mathrm{d}t-\mathrm{Pr}\{N(T^{2\alpha}(0))=n\}w^{\alpha-1}\nonumber\\
		&=\int_{0}^{\infty}e^{-w t}(-(\lambda_n+\mu_n)\mathrm{Pr}\{N(T^{2\alpha}(t))=n\}\nonumber\\
		&\hspace{1.5cm} +\lambda_{n-1}\mathrm{Pr}\{N(T^{2\alpha}(t))=n-1\}+\mu_{n+1}\mathrm{Pr}\{N(T^{2\alpha}(t))=n+1\})\,\mathrm{d}t,\, w>0,
	\end{align}
	where we have used $p(n,0)=\mathrm{Pr}\{N(T^{2\alpha}(0))=n\}$. Now, on taking the inverse Laplace transform of (\ref{pf1}) and using (\ref{frderlap}), we get 
	\begin{align*}
		\mathcal{D}_t^\alpha\mathrm{Pr}\{N(T^{2\alpha}(t))=n\}&=-(\lambda_n+\mu_n)\mathrm{Pr}\{N(T^{2\alpha}(t))=n\}\\
		&\ \ +\lambda_{n-1}\mathrm{Pr}\{N(T^{2\alpha}(t))=n-1\}+\mu_{n+1}\mathrm{Pr}\{N(T^{2\alpha}(t))=n+1\},\ n\ge0.
	\end{align*}
	The proof follows from the uniqueness of distribution.
\end{proof}
\begin{remark}
	The distribution of $T^{2\alpha}(t)$ can be explicitly derived for some specific values of $\alpha$. For instance, if $\alpha=2^{-l}$, $l\ge1$ then the density of $T^{2\alpha}(t)$ coincides with the density of $(l-1)$th iterated Brownian motion, that is,
	$\mathrm{Pr}\{T^{1/2^{l-1}}(t)\in\mathrm{d}x\}=\mathrm{Pr}\{|B_1(|B_2(\dots|B_l(t)|\dots)|)|\in\mathrm{d}x\}$, where $B_i(t)$'s are independent Brownian motions (see Orsingher and Beghin (2009), Theorem 2.1). In particular, for $l=1$, its density coincides with that of a reflecting Brownian motion $\{|B(t)|\}_{t\ge0}$. Thus, $\{N^{1/2}(t)\}_{t\ge0}$ is a BDP at reflecting Brownian time.
\end{remark}
\begin{remark}\label{re2.2}
	Let $\{S^\alpha(t)\}_{t\ge0}$, $0<\alpha<1$ be an $\alpha$-stable subordinator (for definition see Section \ref{subdef}) and $\{E^\alpha(t)\}_{t\ge0}$ be its first hitting time  as defined in (\ref{invsub}). Then, from Theorem 3.1 of Meerschaert \textit{et al.} (2011), we have $E^\alpha(t)\overset{d}{=}T^{2\alpha}(t)$. It follows that $N^\alpha(t)\overset{d}{=}N(E^\alpha(t))$, where $\{E^\alpha(t)\}_{t\ge0}$ is independent of $\{N(t)\}_{t\ge0}$. 
\end{remark}

\subsection{Application of ADM to FBDP} Here, we apply the ADM to solve the system of differential equations given in (\ref{diffequ}). First, we recall the definition of Riemann-Liouville fractional integral.
\begin{definition}
	Let $g(\cdot)$ be an real valued integrable function. Then, its Riemann-Liouville fractional integral is defined as (see Kilbas \textit{et al.} (2006), p. 79)
	\begin{equation*}
		I_t^\alpha g(t)\coloneqq\frac{1}{\Gamma(\alpha)}\int_{0}^{t}\frac{g(s)}{(t-s)^{1-\alpha}}\,\mathrm{d}s,\ \alpha>0,
	\end{equation*}
	where $I^\alpha_t$ is known as the Riemann-Liouville fractional integral operator of order $\alpha>0$.
\end{definition}

Further, for $\beta>0$, we have the following result (see Kilbas \textit{et al.} (2006), p. 81):
\begin{equation}\label{frintresult}
I_t^\alpha(t^{\beta-1})=\frac{\Gamma(\beta)t^{\alpha+\beta-1}}{\Gamma(\alpha+\beta)},\ \alpha>0.
\end{equation}

Note that the Riemann-Liouville fractional integral operator $I_t^\alpha$ is linear. So, the Adomian polynomials $A_k$'s for $Q(u(t))=I_t^\alpha (u(t))$ are given by $A_k(u_0(t), u_1(t)$, $\ldots,u_k(t))=I_t^\alpha(u_k(t))$
for all $k\ge0$.

 On applying $I_t^\alpha$ on both sides of $(\ref{diffequ})$ and substituting $p^\alpha(n,t)=\sum_{k=0}^{\infty}p_k^\alpha(n,t)$ for all $n\ge0$, we get
\begin{equation*}
    \sum_{k=0}^{\infty}p_k^\alpha(n,t)=
        p^\alpha(n,0)+\sum_{k=0}^{\infty}I_t^\alpha(-(\lambda_n+\mu_n)p_k^\alpha(n,t)+\lambda_{n-1}p_k^\alpha(n-1,t)+\mu_{n+1}p_k^\alpha(n+1,t)),\ n\ge0.
\end{equation*}
Using ADM, we have

\begin{equation}\label{adm2}
	p_k^\alpha(n,t)=
		I_t^\alpha(-(\lambda_n+\mu_n)p_{k-1}^\alpha(n,t)+\lambda_{n-1}p_{k-1}^\alpha(n-1,t)+\mu_{n+1}p_{k-1}^\alpha(n+1,t)),\ n\ge0
\end{equation}
and
\begin{equation}\label{adm1}
	p_0^\alpha(n,t)=p^\alpha(n,0)=\begin{cases}
		1,\ n=1,\\
		0,\ n\ne 1,
	\end{cases}
\end{equation}
where $p_k^\alpha(-1,t)=0$ for all $t\ge0$.

 Next result provides a sufficient condition under which the series components of the state probabilities of FBDP are zero.
\begin{proposition}\label{prop1}
For $t\ge0$ and $n\ge k+2$, the series component  $p_k^\alpha(n,t)$ defined by (\ref{adm2}) vanishes, that is, $p_k^\alpha(n,t)=0$ given $n-k\ge2$.
\end{proposition}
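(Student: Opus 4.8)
The plan is to prove the statement by induction on $k$, exploiting the linearity of the Riemann-Liouville operator $I_t^\alpha$ together with the fact that $I_t^\alpha(0)=0$. The recursion (\ref{adm2}) expresses $p_k^\alpha(n,t)$ purely in terms of the three quantities $p_{k-1}^\alpha(n,t)$, $p_{k-1}^\alpha(n-1,t)$ and $p_{k-1}^\alpha(n+1,t)$, so if all three vanish the fractional integral of their linear combination is identically zero. The whole argument therefore reduces to tracking which state indices fall inside the vanishing range as the level $k$ increases by one.

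For the base case $k=0$, the claim is that $p_0^\alpha(n,t)=0$ for $n\ge 2$. This is immediate from the initial data (\ref{adm1}), since $p_0^\alpha(n,t)=0$ for every $n\ne 1$, and in particular for all $n\ge 2=0+2$. For the inductive step I would assume, for some fixed $k\ge 1$, that $p_{k-1}^\alpha(m,t)=0$ whenever $m\ge (k-1)+2=k+1$, and then verify the same property one level up. Fix $n\ge k+2$. I would check the three indices appearing on the right-hand side of (\ref{adm2}): for $p_{k-1}^\alpha(n,t)$ we have $n\ge k+2\ge k+1$; for $p_{k-1}^\alpha(n+1,t)$ we have $n+1\ge k+3\ge k+1$; and for the down-shifted term $p_{k-1}^\alpha(n-1,t)$ we have $n-1\ge k+1$. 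Hence all three terms lie in the range covered by the induction hypothesis and vanish, so that
\begin{equation*}
	p_k^\alpha(n,t)=I_t^\alpha\big(-(\lambda_n+\mu_n)\cdot 0+\lambda_{n-1}\cdot 0+\mu_{n+1}\cdot 0\big)=I_t^\alpha(0)=0,
\end{equation*}
which closes the induction and proves $p_k^\alpha(n,t)=0$ for all $n\ge k+2$.

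The only point that requires attention is the down-shifted term $p_{k-1}^\alpha(n-1,t)$, which carries the smallest state index in the recursion. The threshold $n\ge k+2$ is precisely what is needed to guarantee $n-1\ge k+1=(k-1)+2$, so that this term too falls under the inductive hypothesis; this is the tightest of the three constraints and effectively dictates the exponent $k+2$ in the statement. Since there are no nonlinear terms in (\ref{diffequ}), the Adomian polynomials reduce to $I_t^\alpha(\cdot)$ applied directly to the previous components, so no further machinery beyond linearity and the induction is needed, and I do not anticipate any genuine obstacle.
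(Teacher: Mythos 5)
Your proof is correct and follows essentially the same route as the paper: induction on $k$, using the fact that the recursion (\ref{adm2}) only references the indices $n-1$, $n$, $n+1$ at level $k-1$, so that $n\ge k+2$ (forced by the down-shifted term) places all three inside the inductive vanishing range and $I_t^\alpha(0)=0$ closes the step. The only cosmetic difference is that you anchor the induction at $k=0$ via the initial data (\ref{adm1}), whereas the paper verifies $k=1$ explicitly before inducting; the argument is otherwise identical.
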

\begin{proof}
    It is sufficient to show that
    \begin{equation}\label{3.5}
        p_k^\alpha(n+1+k,t)=0, \ n\ge1,\ k\ge1.
    \end{equation}
    On taking $n=k=1$ in (\ref{adm2}) and using $(\ref{adm1})$, we get
    \begin{equation*}
        p_1^\alpha(3,t)=I_t(-(\lambda_{3}+\mu_{3})p_0^\alpha(3,t)+\lambda_{2}p_0^\alpha(2,t)+\mu_{4}p_0^\alpha(4,t))=I_t0=0.
    \end{equation*}
    So, $(\ref{3.5})$ holds for $n=k=1$.  For $k=1$ and $n\ge1$, from $(\ref{adm2})$, we get
    \begin{equation*}
        p_1^\alpha(n+2,t)=I_t^\alpha(-(\lambda_{n+2}+\mu_{n+2})p_0^\alpha(n+2,t)+\lambda_{n+1}p_0^\alpha(n+1,t)+\mu_{n+3}p_0^\alpha(n+3,t))=I_t^\alpha 0=0,
    \end{equation*}
    where we have used (\ref{adm1}). Hence, the result holds for $k=1$ and $n\ge1$.
    
     Suppose the result holds for some $k=m\ge2$ and for all $n\ge1$, that is,
    \begin{equation}\label{3.6}
        p_l^\alpha(n+1+m,t)=0,\ n\ge1.
    \end{equation}
     Then, for $k=m+1$, from (\ref{adm2}) and using (\ref{3.6}), we get
    \begin{align*}
        p_{m+1}^\alpha(n+m+2,t)&=I_t^\alpha(-(\lambda_{n+m+2}+\mu_{n+m+2})p_l^\alpha(n+m+2,t)\\
        &\ \ +\lambda_{n+m+1}p_l^\alpha(n+m+1,t)+\mu_{n+m+3}p_l^\alpha(n+m+3,t))=I_t^\alpha0=0.
    \end{align*}
   The proof is complete using the method of induction.
\end{proof}

 	Now, we explicitly compute some  of the series components. 	
 	 Let $\Lambda_n=\lambda_n+\mu_n$ and $\theta_n=\lambda_n\mu_{n+1}$ for all $n\ge1$. Then, from (\ref{adm2}), we have
	\begin{equation}\label{ncons1}
		p_k^\alpha(n,t)=\begin{cases}
			I_t^\alpha(\mu_1 p_{k-1}^\alpha(1,t)),\ n=0,\  k\ge1,\vspace{0.1cm}\\
			I_t^\alpha(-\Lambda_1p_{k-1}^\alpha(1,t)+\mu_2p_{k-1}^\alpha(2,t)),\ n=1,\ k\ge1,\vspace{0.1cm}\\
			I_t^\alpha(-\Lambda_np_{k-1}^\alpha(n,t)+\lambda_{n-1}p_{k-1}^\alpha(n-1,t)+\mu_{n+1} p_{k-1}^\alpha(n+1,t)),\ n\ge2,\ k\ge1.
		\end{cases}
	\end{equation}
On taking $n=0$ in (\ref{ncons1}), we have
$p_k^\alpha(0,t)=I_t^\alpha(\mu_1 p_{k-1}^\alpha(1,t)),\ k\ge1.$
So, by using (\ref{frintresult}), we get
\begin{equation}\label{0.1}
	p_1^\alpha(0,t)=I_t^\alpha(\mu_1 p_0^\alpha(1,t))=\mu_1 I_t^\alpha(t^0)=\frac{\mu_1 t^\alpha}{\Gamma(\alpha+1)}.
\end{equation}
For $n=1$, we have
\begin{equation}\label{1.1}
	p_1^\alpha(1,t)=I_t^\alpha(-\Lambda_1p_{0}^\alpha(1,t)+\mu_{2} p_{0}^\alpha(2,t)),\ k\ge1.
\end{equation}
In view of Proposition \ref{prop1}, using (\ref{frintresult}), we get $p_1^\alpha(1,t)=-\Lambda_1 t^\alpha/\Gamma(\alpha+1)$. So, $p_2^\alpha(0,t)=-\mu_1\Lambda_1t^{2\alpha}/\Gamma(2\alpha+1)$.

Now, on taking $n=2$ in (\ref{ncons1}) and using Proposition \ref{prop1}, we get
\begin{equation*}
	p_1^\alpha(2,t)=I_t^\alpha(-\Lambda_2p_0^\alpha(2,t)+\lambda_1p_0^\alpha(1,t)+\mu_3p_0^\alpha(3,t))=\frac{\lambda_1t^\alpha}{\Gamma(\alpha+1)}.
\end{equation*}
Thus, 
\begin{equation*}
	p_2^\alpha(1,t)=I_t^\alpha(-\Lambda_1p_1^\alpha(1,t)+\mu_2p_1^\alpha(2,t))=(\Lambda_1^2+\theta_1)\frac{t^{2\alpha}}{\Gamma(2\alpha+1)}.
\end{equation*}
Similarly, we get the following series components:
	\begin{align*}		   p_3^\alpha(0,t)&=I_t^\alpha(\mu_1p_2^\alpha(1,t))=\mu_1(\Lambda_1^2+\theta_1)\frac{t^{3\alpha}}{\Gamma(3\alpha+1)},\\
		p_2^\alpha(2,t)&=I_t^\alpha(-\Lambda_2p_1^\alpha(2,t)+\lambda_1p_1^\alpha(1,t)+\mu_3p_1^\alpha(3,t))=-\lambda_1(\Lambda_1+\Lambda_2)\frac{t^{2\alpha}}{\Gamma(2\alpha+1)},\\
		p_3^\alpha(1,t)&=I_t^\alpha(-\Lambda_1p_2^\alpha(1,t)+\mu_2p_2^\alpha(2,t))=-(\Lambda_1^3+2\theta_1\Lambda_1+\theta_1\Lambda_2)\frac{t^{3\alpha}}{\Gamma(3\alpha+1)},\\
		p_4^\alpha(0,t)&=I_t^\alpha(\mu_1p_3^\alpha(1,t))=-\mu_1(\Lambda_1^3+2\theta_1\Lambda_1+\theta_1\Lambda_2)\frac{t^{4\alpha}}{\Gamma(4\alpha+1)},\\
		p_2^\alpha(3,t)&=I_t^\alpha(-\Lambda_3p_1^\alpha(3,t)+\lambda_2p_1^\alpha(2,t)+\mu_4p_1^\alpha(4,t))=\lambda_1\lambda_2\frac{t^{2\alpha}}{\Gamma(2\alpha+1)},\\
		p_3^\alpha(2,t)&=I_t^\alpha(-\Lambda_2p_2^\alpha(2,t)+\lambda_1p_2^\alpha(1,t)+\mu_3p_2^\alpha(3,t))=\lambda_1(\Lambda_1^2+\Lambda_1\Lambda_2+\Lambda_2^2+\theta_1+\theta_2)\frac{t^{3\alpha}}{\Gamma(3\alpha+1)},\\
		p_4^\alpha(1,t)&=I_t^\alpha(-\Lambda_1p_3^\alpha(1,t)+\mu_2p_3^\alpha(2,t))=(\Lambda_1^4+3\theta_1\Lambda_1^2+2\theta_1\Lambda_1\Lambda_2+\theta_1\Lambda_2^2+\theta_1^2+\theta_1\theta_2)\frac{t^{4\alpha}}{\Gamma(4\alpha+1)},\\
		p_5^\alpha(0,t)&=I_t^\alpha(\mu_1p_4^\alpha(1,t))=\mu_1(\Lambda_1^4+3\theta_1\Lambda_1^2+2\theta_1\Lambda_1\Lambda_2+\theta_1\Lambda_2^2+\theta_1^2+\theta_1\theta_2)\frac{t^{5\alpha}}{\Gamma(5\alpha+1)},\\
		p_3^\alpha(3,t)&=I_t^\alpha(-\Lambda_3p_2^\alpha(3,t)+\lambda_2p_2^\alpha(2,t)+\mu_4p_2^\alpha(4,t))=-\lambda_1\lambda_2(\Lambda_1+\Lambda_2+\Lambda_3)\frac{t^{3\alpha}}{\Gamma(3\alpha+1)}.
	\end{align*}

Note that all the series components can be computed recursively. Next, we derive a closed form expression for the $k$th series component.
\begin{theorem}\label{cmpts}
Let $\{a_{n,\,k}\}_{n\ge0,\,k\ge1}$ and $\{b_{n,\,k}\}_{n\ge0,\,k\ge1}$ be two double index sequences of positive integer defined as follows:
\begin{equation}\label{ank}
	a_{n,\,k}=\left[\frac{n+k}{2}\right],\ n\ge0,\,k\ge1
\end{equation} 
and
\begin{equation}\label{bnk}
	b_{n,\,k}=\begin{cases}
		\left[\frac{n+k+1}{2}\right],\ n-k\ne1,\vspace{0.1cm}\\
		0,\ n-k=1,
	\end{cases}
\end{equation}
such that $a_{n,\,k}\leq b_{n,\,k}$ when $n\neq k+1$ for all $n\ge0$ and $k\ge1$, where $[\cdot]$ denotes the greatest integer function. Also, let $\psi_{n,\,k}\big((\lambda_j)_{1}^{a_{n,\,k}},(\mu_j)_{1}^{b_{n,\,k}}\big)$, $n\ge0$, $k\ge0$ be functions of $\lambda_1,\lambda_{2},\dots,\lambda_{a_{n,\,k}}, \mu_1,\mu_{2},\dots,\mu_{b_{n,\,k}}$ such that $\psi_{0,0}=0$, $\psi_{1,0}=1$, $\psi_{0,1}=\mu_1$ and
{\scriptsize\begin{equation}\label{psidef}
	\psi_{n,\,k+1}\big((\lambda_j)_{1}^{a_{n,\,k+1}},(\mu_j)_{1}^{b_{n,\,k+1}}\big)=\begin{cases}
		0,\ n> k+2,\vspace{0.2cm}\\
		\mu_1\psi_{1,\,k}\big((\lambda_j)_{1}^{a_{1,k}},(\mu_j)_{1}^{b_{1,k}}\big),\ n=0,\,k\ge1,\vspace{0.2cm}\\
		\Lambda_1\psi_{1,\,k}\big((\lambda_j)_{1}^{a_{1,k}},(\mu_j)_{1}^{b_{1,k}}\big)+\mu_2\psi_{2,\,k}\big((\lambda_j)_{1}^{a_{2,k}},(\mu_j)_{1}^{b_{2,k}}\big),\ n=1,\,k\ge0,\vspace{0.2cm}\\
		\Lambda_n\psi_{n,\,k}\big((\lambda_j)_{1}^{a_{n,\,k}},(\mu_j)_{1}^{b_{n,\,k}}\big)+\lambda_{n-1}\psi_{n-1,\,k}\big((\lambda_j)_{1}^{a_{n-1,\,k}},(\mu_j)_{1}^{b_{n-1,\,k}}\big)\\
		\hspace{1.5cm}+\mu_{n+1}\psi_{n+1,\,k}\big((\lambda_j)_{1}^{a_{n+1,\,k}},(\mu_j)_{1}^{b_{n+1,\,k}}\big),\ n\ge2,\,k> n-2,\vspace{0.2cm}\\
		\lambda_1\lambda_{2}\cdots\lambda_{n-1},\ n\ge2,\,n-k=2.
	\end{cases}
\end{equation}}
 Then, for all $t\ge0$, the series components $p_k^\alpha(n,t)$, $k\ge0$ of the state probabilities $p^\alpha(n,t)$, $n\ge0$ are given by
\begin{equation}\label{secomps}
	p_k^\alpha(n,t)=\begin{cases}
		(-1)^{k+1}
		\psi_{0,\,k}\big((\lambda_j)_{1}^{a_{0,k}},(\mu_j)_{1}^{b_{0,k}}\big)\frac{t^{k\alpha}}{\Gamma(k\alpha+1)},\ n=0,\, k\ge1,\vspace{0.2cm}\\
		(-1)^{k-n+1}
		\psi_{n,\,k}\big((\lambda_j)_{1}^{a_{n,\,k}},(\mu_j)_{1}^{b_{n,\,k}}\big)\frac{t^{k\alpha}}{\Gamma(k\alpha+1)},\ n\ge1,\, n-k<1,\vspace{0.2cm}\\
		\prod_{j=1}^{n-1}\lambda_j\frac{t^{k\alpha}}{\Gamma(k\alpha+1)},\ n\ge2,\, n-k=1,\vspace{0.2cm}\\
		0,\ n=k=0\ \text{or}\ n-k>1.
	\end{cases}
\end{equation}
\end{theorem}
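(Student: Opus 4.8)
The plan is to argue by "guess and verify": I will show that the piecewise expression on the right-hand side of (\ref{secomps}) satisfies both the ADM recursion (\ref{adm2}) and the initial data (\ref{adm1}), and then conclude by the uniqueness of the decomposition components. The first step is to separate the time dependence from the combinatorial coefficients. I claim that for every $k\ge 0$ and $n\ge 0$ there is a constant $c_k(n)$, independent of $t$, with
\[
	p_k^\alpha(n,t)=c_k(n)\,\frac{t^{k\alpha}}{\Gamma(k\alpha+1)}.
\]
This follows by induction on $k$: the base case $k=0$ is exactly (\ref{adm1}), and in the inductive step every summand $p_{k-1}^\alpha(m,t)$ on the right of (\ref{adm2}) already carries the factor $t^{(k-1)\alpha}/\Gamma((k-1)\alpha+1)$, so applying $I_t^\alpha$ and invoking (\ref{frintresult}) with $\beta=(k-1)\alpha+1$ reproduces precisely the factor $t^{k\alpha}/\Gamma(k\alpha+1)$. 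Substituting this form back into (\ref{adm2}) collapses the functional recursion to the purely numerical one
\[
	c_k(n)=-\Lambda_n c_{k-1}(n)+\lambda_{n-1}c_{k-1}(n-1)+\mu_{n+1}c_{k-1}(n+1),
\]
valid with the conventions $\lambda_0=\mu_0=0$ and $c_k(-1)=0$. It therefore suffices to check that the coefficients read off from (\ref{secomps}) satisfy this recursion together with the correct initial values.

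The heart of the verification is a sign-bookkeeping identity. On the interior region $n-k<1$ I write $c_k(n)=(-1)^{k-n+1}\psi_{n,\,k}$, and track the signs of the three neighbouring coefficients appearing in the recursion, namely $(-1)^{k-n}$ for $c_{k-1}(n)$, and $(-1)^{k-n+1}$ for both $c_{k-1}(n-1)$ and $c_{k-1}(n+1)$. The leading minus sign in front of $\Lambda_n$ is then absorbed by the mismatched parity of the first neighbour, and after cancelling the common factor $(-1)^{k-n+1}$ the numerical recursion becomes
\[
	\psi_{n,\,k}=\Lambda_n\psi_{n,\,k-1}+\lambda_{n-1}\psi_{n-1,\,k-1}+\mu_{n+1}\psi_{n+1,\,k-1},
\]
which is exactly (\ref{psidef}) with $k$ replaced by $k-1$. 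The special lines $n=0$ and $n=1$ of (\ref{psidef}) are recovered in the same manner once $\lambda_0=\mu_0=0$ and $c_k(-1)=0$ are taken into account, and the base data $\psi_{1,0}=1$, $\psi_{0,1}=\mu_1$ match (\ref{adm1}) and (\ref{0.1}) directly.

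It remains to treat the boundary regions. The vanishing region $n-k>1$ is supplied verbatim by Proposition \ref{prop1}. On the diagonal $n-k=1$ I evaluate $c_{n-1}(n)$ from the recursion itself: its two off-diagonal neighbours $c_{n-2}(n)$ and $c_{n-2}(n+1)$ both lie in the region $n-k>1$ and hence vanish by Proposition \ref{prop1}, leaving
\[
	c_{n-1}(n)=\lambda_{n-1}c_{n-2}(n-1)=\lambda_{n-1}\prod_{j=1}^{n-2}\lambda_j=\prod_{j=1}^{n-1}\lambda_j,
\]
so the third line of (\ref{secomps}) propagates down the diagonal by induction, with base value $c_0(1)=1$. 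Finally one checks that the unified sign rule is self-consistent at the interface between regions: on the diagonal the exponent $(-1)^{(n-1)-n+1}=(-1)^0=1$ agrees with the explicit product, so a diagonal entry feeding into an interior recursion carries the correct sign automatically.

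The main obstacle is entirely bookkeeping rather than conceptual: one must confirm that the three pieces of (\ref{secomps}) (the signed $\psi$-interior, the $\lambda$-product diagonal, and the vanishing region) interlock consistently across their shared boundaries, so that a single recursion $c_k(n)=-\Lambda_n c_{k-1}(n)+\lambda_{n-1}c_{k-1}(n-1)+\mu_{n+1}c_{k-1}(n+1)$ governs all of them, and that the parity factor $(-1)^{k-n+1}$ transforms correctly whenever the recursion steps a neighbour from one region into another. Once this interlocking and the base cases at $k=0,1$ are established, the induction on $k$ closes and (\ref{secomps}) follows.
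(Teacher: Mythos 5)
Your proof is correct and follows essentially the same route as the paper's: an induction on $k$ that verifies the claimed closed form against the ADM recursion (\ref{adm2}), using (\ref{frintresult}) to produce the factor $t^{k\alpha}/\Gamma(k\alpha+1)$, Proposition \ref{prop1} for the vanishing region $n-k>1$, the diagonal product obtained from vanishing neighbours, and the sign bookkeeping that collapses the recursion exactly onto (\ref{psidef}). The only difference is organizational---you extract the time factor once via a preliminary separation lemma and then verify a purely numerical coefficient recursion, whereas the paper carries the time factor through each case ($n=0$, $n=1$, $n\ge2$ with its subcases) of the induction---which is a repackaging of the same argument rather than a different one.
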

\begin{proof}
Note that $p_0^\alpha(0,t)=0$ and $p^\alpha_k(n,t)=0$ for $n> k+1$ follow from (\ref{adm1}) and Proposition \ref{prop1}, respectively. Also, for $n=0$, $k=1$ and  $n=k=1$, the result follows from (\ref{0.1}) and (\ref{1.1}), respectively.

Now, let us assume that the result holds for $n\ge0$ and some $k=m\ge2$, that is,
\begin{equation}\label{indhypo}
	p_m^\alpha(n,t)=\begin{cases}
		(-1)^{m+1}
		\psi_{0,m}\big((\lambda_j)_{1}^{a_{0,m}},(\mu_j)_{1}^{b_{0,m}}\big)\frac{t^{m\alpha}}{\Gamma(m\alpha+1)},\ n=0,\vspace{0.2cm}\\
		(-1)^{m-n+1}
		\psi_{n,m}\big((\lambda_j)_{1}^{a_{n,m}},(\mu_j)_{1}^{b_{n,m}}\big)\frac{t^{m\alpha}}{\Gamma(m\alpha+1)},\ n\ge1,\, n-m<1,\vspace{0.2cm}\\
		\prod_{j=1}^{n-1}\lambda_j\frac{t^{m\alpha}}{\Gamma(m\alpha+1)},\ n\ge1,\,n-m=1,\vspace{0.2cm}\\
		0,\ n-m>1.
	\end{cases}
\end{equation}
Then, we have the following three cases:\\
\textit{Case I.} If $n=0$ then from (\ref{ncons1}), we have $	p^\alpha_{m+1}(0,t)=I_t^\alpha(\mu_1p^\alpha_m(1,t))$. So, using induction hypothesis (\ref{indhypo}), we get
\begin{align*}
	p^\alpha_{m+1}(0,t)&=\mu_1I_t^\alpha\left((-1)^{m}
	\psi_{1,m}\big((\lambda_j)_{1}^{a_{1,m}},(\mu_j)_{1}^{b_{1,m}}\big)\frac{t^{m\alpha}}{\Gamma(m\alpha+1)}\right)\\
	&=(-1)^{m}\mu_1\psi_{1,m}\big((\lambda_j)_{1}^{a_{1,m}},(\mu_j)_{1}^{b_{1,m}}\big)\frac{t^{(m+1)\alpha}}{\Gamma((m+1)\alpha+1)},
\end{align*}
where have used (\ref{frintresult}) to get the last equality. So, the result holds for $n=0$ and $k=m+1$ using (\ref{psidef}).\\
\textit{Case II.} Let $n=1$. Then, for $k=m+1$, from (\ref{ncons1}), we have
\begin{align*}
	p^\alpha_{m+1}(1,t)&=I_t^\alpha(-\Lambda_1 p^\alpha_m(1,t)+\mu_{2}p^\alpha_m(2,t))\\
	&=\big(\Lambda_1(-1)^{m+1}\mu_1\psi_{1,m}\big((\lambda_j)_{1}^{a_{1,m}},(\mu_j)_{1}^{b_{1,m}}\big)\\
	&\ \ +\mu_{2}(-1)^{m-1}
	\psi_{2,m}\big((\lambda_j)_{1}^{a_{2,m}},(\mu_j)_{1}^{b_{2,m}}\big)\big)I_t^\alpha\bigg(\frac{t^{m\alpha}}{\Gamma(m\alpha+1)}\bigg)\\
	&=(-1)^{m+1}\psi_{1,m+1}\big((\lambda_j)_{1}^{a_{1,m+1}},(\mu_j)_{1}^{b_{1,m+1}}\big)\frac{t^{(m+1)\alpha}}{\Gamma((m+1)\alpha+1)},
\end{align*}
where we have used (\ref{indhypo}) to get the first equality, and the last step follows from (\ref{frintresult}) and (\ref{psidef}). Hence, the result holds for $n=1$ and $k=m+1$.\\
\textit{Case III.} If $n\ge2$ then in view of Proposition \ref{prop1}, we have $p^\alpha_{m+1}(n,t)=0$ for all $n>m+2$.

Suppose $n-m=2$. Then, from Proposition \ref{prop1} it follows that $p^\alpha_m(n+1,t)=p^\alpha_m(n,t)=0$ for all $t\ge0$, and we have
\begin{align*}
	p^\alpha_{m+1}(m+2,t)&=I_t^\alpha(\lambda_{m+1}p^\alpha_m(m+1,t))\\
	&=I_t^\alpha\left(\lambda_{m+1}
	\psi_{m+1,m}\big((\lambda_j)_{1}^{a_{m+1,m}},(\mu_j)_{1}^{b_{m+1,m}}\big)\frac{t^{m\alpha}}{\Gamma(m\alpha+1)}\right)\\
	&=\lambda_{m+1}
	\big(\Lambda_{m+1}\psi_{m+1,m-1}\big((\lambda_j)_{1}^{a_{m+1,m-1}},(\mu_j)_{1}^{b_{m+1,m-1}}\big)\\
	&\ \ +\lambda_{m}\psi_{m,m-1}\big((\lambda_j)_{1}^{a_{m,m-1}},(\mu_j)_{1}^{b_{m,m-1}}\big)\\
	&\ \ +\mu_{m+2}\psi_{m+2,m-1}\big((\lambda_j)_{1}^{a_{m+2,m-1}},(\mu_j)_{1}^{b_{m+2,m-1}}\big)\big)\frac{t^{(m+1)\alpha}}{\Gamma((m+1)\alpha+1)}\\
	&=\lambda_1\lambda_{2}\cdots\lambda_{m+1}\frac{t^{(m+1)\alpha}}{\Gamma((m+1)\alpha+1)},
\end{align*}
where we have used  $\psi_{m,m-1}\big((\lambda_j)_{1}^{a_{m,m-1}},(\mu_j)_{1}^{b_{m,m-1}}\big)=\lambda_1\lambda_2\cdots\lambda_{m-1}$, $\psi_{m+1,m-1}=0$ and $\psi_{m+2,m-1}$ $=0$.

If $n-m=1$ then $p^\alpha_m(m+2,t)=0$ for all $t\ge0$. So,
\begin{align*}
	p^\alpha_{m+1}(m+1,t)&=I_t^\alpha(-\Lambda_{m+1}p^\alpha_m(m+1,t)+\lambda_{m}p^\alpha_m(m,t))\\
	&=\big(-\Lambda_{m+1}\psi_{m+1,m}\big((\lambda_j)_{1}^{a_{m+1,m}},(\mu_j)_{1}^{b_{m+1,m}}\big)\\
	&\ \ -\lambda_m\psi_{m,m}\big((\lambda_j)_{1}^{a_{m,m}},(\mu_j)_{1}^{b_{m,m}}\big)\big)\frac{t^{(m+1)\alpha}}{\Gamma((m+1)\alpha+1)}\\	
	&=-\psi_{m+1,m+1}\big((\lambda_j)_{1}^{a_{m+1,m+1}},(\mu_j)_{1}^{b_{m+1,m+1}}\big)\frac{t^{(m+1)\alpha}}{\Gamma((m+1)\alpha+1)},
\end{align*}
where the last equality follows from (\ref{psidef}) on using the fact that $\psi_{m+2,m}=0$.

Let $n-m\leq0$. Then, from (\ref{ncons1}), we have
\begin{align*}
	p^\alpha_{m+1}(n,t)&=I_t^\alpha(-\Lambda_np_{m}^\alpha(n,t)+\lambda_{n-1}p_{m}^\alpha(n-1,t)+\mu_{n+1} p_{m}^\alpha(n+1,t))\\
	&=(-1)^{m-n+2}\big(\Lambda_n\psi_{n,m}\big((\lambda_j)_{1}^{a_{n,m}},(\mu_j)_{1}^{b_{n,m}}\big)+\lambda_{n-1}\psi_{n-1,m}\big((\lambda_j)_{1}^{a_{n-1,m}},(\mu_j)_{1}^{b_{n-1,m}}\big)\\
	&\ \ +\mu_{n+1}\psi_{n+1,m}\big((\lambda_j)_{1}^{a_{n+1,m}},(\mu_j)_{1}^{b_{n+1,m}}\big)\big)\frac{t^{(m+1)\alpha}}{\Gamma((m+1)\alpha+1)}\\
	&=(-1)^{m-n+2}\psi_{n,m+1}\big((\lambda_j)_{1}^{a_{n,m+1}},(\mu_j)_{1}^{b_{n,m+1}}\big)\frac{t^{(m+1)\alpha}}{\Gamma((m+1)\alpha+1)},
\end{align*}
where we have used (\ref{indhypo}) to get the second equality and the last step follows from (\ref{psidef}). Thus, the result holds for $n\ge2$ and $k=m+1$. The proof is complete using the method of induction.
\end{proof}
\begin{proposition}
	The functions $\psi_{n,\,k}\big((\lambda_j)_{1}^{a_{n,\,k}},(\mu_j)_{1}^{b_{n,\,k}}\big)$, $n\ge0$, $k\ge0$ as defined in (\ref{psidef}) are homogeneous polynomials of order $k$ in $\lambda_1,\lambda_{2},\dots,\lambda_{a_{n,\,k}},\mu_1,\mu_{2},\dots,\mu_{b_{n,\,k}}$, where $a_{n,\,k}$ and $b_{n,\,k}$ are defined in (\ref{ank}) and (\ref{bnk}), respectively.	
\end{proposition}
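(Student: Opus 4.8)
The plan is to induct on the order $k$, reading off the claim from the recursive definition (\ref{psidef}). The essential observation is that every coefficient that appears in that recursion, namely $\Lambda_n=\lambda_n+\mu_n$, $\lambda_{n-1}$, $\mu_{n+1}$, as well as the prefactors $\mu_1$ and $\mu_2$, is a homogeneous polynomial of degree $1$ in the variables $\lambda_j,\mu_j$. Since a product of a degree-$1$ polynomial with a homogeneous polynomial of degree $k$ is homogeneous of degree $k+1$, and a sum of homogeneous polynomials of a common degree is homogeneous of that same degree, the recursion (\ref{psidef}) should raise the degree by exactly one at each step, matching the increment in $k$.

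First I would dispose of the base cases. For $k=0$ the prescribed value $\psi_{1,0}=1$ is a constant, hence homogeneous of degree $0$, while $\psi_{0,0}=0$ and, by the vanishing branch of (\ref{psidef}), $\psi_{n,0}=0$ for every $n\ge2$; the zero polynomial is (conventionally) homogeneous of every degree, so this is consistent. The remaining initial datum $\psi_{0,1}=\mu_1$ is homogeneous of degree $1$. This establishes the statement for order $k=0$ and for the seed value at order $1$. For the inductive step I fix $k\ge1$, assume $\psi_{n,k}$ is homogeneous of degree $k$ for all $n\ge0$, and examine the branches of (\ref{psidef}) defining $\psi_{n,k+1}$. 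The branch $n>k+2$ gives $\psi_{n,k+1}=0$, homogeneous of degree $k+1$ trivially. For $n=0$ one has $\psi_{0,k+1}=\mu_1\psi_{1,k}$, a degree-$1$ factor times a degree-$k$ homogeneous polynomial, hence of degree $k+1$. For $n=1$ and for $n\ge2$ with $k>n-2$, each summand is one of $\Lambda_n,\lambda_{n-1},\mu_{n+1},\mu_2$ multiplied by $\psi_{n,k}$ or $\psi_{n\pm1,k}$, which are homogeneous of degree $k$ by hypothesis, so each summand and therefore the sum is homogeneous of degree $k+1$. Finally, in the terminal branch $n-k=2$ we have $\psi_{n,k+1}=\lambda_1\lambda_2\cdots\lambda_{n-1}$, a product of $n-1=k+1$ variables, which is homogeneous of degree $k+1$. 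As every branch produces a homogeneous polynomial of degree $k+1$, the induction closes.

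The homogeneity count above is clean; the point I expect to require genuine care is the bookkeeping of the variable ranges. To assert that $\psi_{n,k+1}$ really is a polynomial in the declared set $\lambda_1,\dots,\lambda_{a_{n,k+1}}$ and $\mu_1,\dots,\mu_{b_{n,k+1}}$, one must check that the variables occurring on the right-hand sides of (\ref{psidef}), which come from $\psi_{n,k}$ and $\psi_{n\pm1,k}$ together with the extra factors, never exceed these index bounds. This amounts to verifying the elementary monotonicity inequalities $a_{n\pm1,k}\le a_{n,k+1}$ and $b_{n\pm1,k}\le b_{n,k+1}$ (and $a_{n,k+1}\le b_{n,k+1}$ for $n\ne k+1$) directly from the closed forms (\ref{ank}) and (\ref{bnk}) involving the greatest-integer function, with attention to the parity-dependent cases and to the exceptional value $n-k=1$ where $b_{n,k}=0$. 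Once these index relations are confirmed, the degree argument applies verbatim and the proof is complete.
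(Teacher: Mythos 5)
Your proof is correct and follows essentially the same route as the paper's: induction on $k$, using the fact that each branch of the recursion (\ref{psidef}) multiplies a degree-$k$ homogeneous polynomial by a degree-$1$ factor ($\Lambda_n$, $\lambda_{n-1}$, $\mu_{n+1}$, $\mu_1$, $\mu_2$), with the base cases checked directly. You are in fact more careful than the paper, which compresses the inductive step into a single sentence: in particular, your explicit verification of the terminal branch $n-k=2$ (where $\lambda_1\cdots\lambda_{n-1}$ has exactly $k+1$ factors, independently of the induction hypothesis) and your flagging of the variable-range bookkeeping via $a_{n,k}$ and $b_{n,k}$ are both points the paper passes over silently.
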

\begin{proof}
	Note that $\psi_{n,\,k}=0$ whenever $n-k\ge2$ which is a homogeneous polynomial of any order. Also, $\psi_{0,0}=0$, $\psi_{0,1}=\mu_1$ and $\psi_{1,0}=1$ are homogeneous polynomials. For $n=k=1$, from (\ref{psidef}), we have $\psi_{1,1}(\lambda_1,\mu_1)=(\lambda_1+\mu_1)\psi_{1,0}=(\lambda+\mu)$. So, the result holds for $n=k=1$.
	
	Let us assume that the result holds for all $n\ge0$ and for some $k=m\ge2$, that is, the polynomial $\psi_{n,m}\big((\lambda_j)_{1}^{a_{n,m}},(\mu_j)_{1}^{b_{n,m}}\big)$ is homogeneous of order $m$ for all $n\ge0$. So, from (\ref{psidef}) and using induction hypothesis, it follows that $\psi_{n,m+1}\big((\lambda_j)_{1}^{a_{n,m+1}},(\mu_j)_{1}^{b_{n,m+1}}\big)$ is a homogeneous polynomial of order $m+1$. This completes the proof. 	
\end{proof}

On summing (\ref{secomps}) over the range of $k$, we get the following result:
\begin{theorem}\label{thmpmf}
	For $n\ge1$, the state probabilities of FBDP are given by
	\begin{equation}\label{pn}
		p^\alpha(n,t)=\frac{t^{(n-1)\alpha}}{\Gamma((n-1)\alpha+1)}\prod_{j=1}^{n-1}\lambda_j+\sum_{k=n}^{\infty}(-1)^{k-n+1}
		\psi_{n,\,k}\big((\lambda_j)_{1}^{a_{n,\,k}},(\mu_j)_{1}^{b_{n,\,k}}\big)\frac{t^{k\alpha}}{\Gamma(k\alpha+1)},\ n\ge1
	\end{equation}
	and its extinction probability is 
	\begin{equation}\label{p0}
		p^\alpha(0,t)=\sum_{k=1}^{\infty}(-1)^{k+1}
		\psi_{0,\,k}\big((\lambda_j)_{1}^{a_{0,k}},(\mu_j)_{1}^{b_{0,k}}\big)\frac{t^{k\alpha}}{\Gamma(k\alpha+1)}.
	\end{equation}
\end{theorem}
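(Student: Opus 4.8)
The plan is to read off both (\ref{pn}) and (\ref{p0}) directly from the ADM decomposition $p^\alpha(n,t)=\sum_{k=0}^{\infty}p_k^\alpha(n,t)$ by substituting the closed form of each component supplied by Theorem \ref{cmpts} and collecting the surviving terms. Since (\ref{secomps}) partitions $p_k^\alpha(n,t)$ into four regimes according to the sign of $n-k$, the whole argument amounts to summing over $k$ within these regimes. For the extinction probability ($n=0$) this is immediate: $p_0^\alpha(0,t)=0$ by (\ref{adm1}), and for each $k\ge1$ the first branch of (\ref{secomps}) applies, so summing over $k\ge1$ gives (\ref{p0}) with no further work.

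For $n\ge1$ I would split $\sum_{k=0}^{\infty}p_k^\alpha(n,t)$ at $k=n-1$. By Proposition \ref{prop1}, equivalently the fourth branch of (\ref{secomps}), every component with $k<n-1$ (that is, $n-k>1$) vanishes, so the sum effectively starts at $k=n-1$. The single term $k=n-1$ equals $\big(\prod_{j=1}^{n-1}\lambda_j\big)t^{(n-1)\alpha}/\Gamma((n-1)\alpha+1)$---via the third branch of (\ref{secomps}) when $n\ge2$, and via the initial value $p_0^\alpha(1,t)=1$ (an empty product) when $n=1$---which is precisely the leading term of (\ref{pn}). All remaining terms $k\ge n$ lie in the second branch ($n-k<1$), and their sum is exactly the series $\sum_{k=n}^{\infty}(-1)^{k-n+1}\psi_{n,\,k}\,t^{k\alpha}/\Gamma(k\alpha+1)$. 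Adding the two pieces yields (\ref{pn}).

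The one step that is more than reindexing is justifying the term-by-term summation, which relies on the absolute convergence of $\sum_{k}p_k^\alpha(n,t)$ assumed in the ADM framework. I would check it by estimating $\psi_{n,\,k}$: being a homogeneous polynomial of degree $k$ in the rates (as recorded in the Proposition just above), built from products of $k$ transition rates along birth--death paths of length $k$, it admits a bound $|\psi_{n,\,k}|\le C_k$, so the $k$-th term is dominated by $C_k\,t^{k\alpha}/\Gamma(k\alpha+1)$. Under the regularity hypothesis on $\{\lambda_n\}$ and $\{\mu_n\}$ this majorant sums to a convergent Mittag--Leffler-type series for each fixed $t\ge0$---explicitly so in the linear case of Orsingher and Polito (2011)---which simultaneously legitimizes the rearrangement and confirms that the assembled $p^\alpha(n,t)$ solves (\ref{diffequ}). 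This convergence verification is where I would expect the real work to lie, the summation itself being a routine consequence of Theorem \ref{cmpts}.
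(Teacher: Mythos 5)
Your proposal is correct and follows essentially the same route as the paper: the paper's entire argument is precisely to sum the components (\ref{secomps}) of Theorem \ref{cmpts} over $k$, with the vanishing terms ($n-k>1$), the single boundary term $k=n-1$, and the tail $k\ge n$ assembling into (\ref{pn}), and the $n=0$ case giving (\ref{p0}). Your closing paragraph on justifying the term-by-term summation addresses a point the paper leaves implicit (absolute convergence of $\sum_k p_k^\alpha(n,t)$ is an assumption built into the ADM framework of Section 2.2), so it is a welcome addition rather than a divergence in method.
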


\begin{remark}
	For $\alpha=1$, the FBDP reduces to the BDP. Its state probabilities are
	\begin{equation}\label{bdppn}
		p(n,t)=\frac{t^{n-1}}{(n-1)!}\prod_{j=1}^{n-1}\lambda_j+\sum_{k=n}^{\infty}(-1)^{k-n+1}
		\psi_{n,\,k}\big((\lambda_j)_{1}^{a_{n,\,k}},(\mu_j)_{1}^{b_{n,\,k}}\big)\frac{t^{k}}{k!},\ n\ge1
	\end{equation}
	and  
	\begin{equation}\label{bdpp0}
		p(0,t)=\sum_{k=1}^{\infty}(-1)^{k+1}
		\psi_{0,\,k}\big((\lambda_j)_{1}^{a_{0,k}},(\mu_j)_{1}^{b_{0,k}}\big)\frac{t^{k}}{k!},
	\end{equation}
	where $\psi_{n,\,k}$'s are as defined in (\ref{psidef}).
\end{remark}
\begin{theorem}
	The state probabilities given in (\ref{pn}) and (\ref{p0}) satisfy the regularity condition.
\end{theorem}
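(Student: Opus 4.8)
The plan is to sum the series representation over all states and exploit the layered structure produced by the Adomian decomposition. Writing $p^\alpha(n,t)=\sum_{k=0}^{\infty}p_k^\alpha(n,t)$, I would establish the single-level identity $\sum_{n=0}^{\infty}p_k^\alpha(n,t)=\delta_{k,0}$ and then interchange the two summations, so that $\sum_{n=0}^{\infty}p^\alpha(n,t)=\sum_{k=0}^{\infty}\sum_{n=0}^{\infty}p_k^\alpha(n,t)=1$. The advantage of organizing the computation this way is that Proposition \ref{prop1} forces $p_k^\alpha(n,t)=0$ whenever $n\ge k+2$, so for each fixed $k$ the inner sum $\sum_{n=0}^{\infty}p_k^\alpha(n,t)$ has only the finitely many nonzero terms $n=0,1,\dots,k+1$, and no convergence question arises at the level of a single $k$.

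First I would verify the base level: from (\ref{adm1}), $p_0^\alpha(n,t)=p^\alpha(n,0)$ equals $1$ for $n=1$ and $0$ otherwise, so $\sum_{n=0}^{\infty}p_0^\alpha(n,t)=1$. Next, for $k\ge1$ I would sum the recursion (\ref{adm2}) over $n\ge0$. Since $I_t^\alpha$ is linear and acts only on $t$, it can be pulled outside the finite sum, leaving
\[
\sum_{n=0}^{\infty}p_k^\alpha(n,t)=I_t^\alpha\!\Big(\sum_{n=0}^{\infty}\big[-(\lambda_n+\mu_n)p_{k-1}^\alpha(n,t)+\lambda_{n-1}p_{k-1}^\alpha(n-1,t)+\mu_{n+1}p_{k-1}^\alpha(n+1,t)\big]\Big).
\]
Reindexing the shift terms and using the conventions $\mu_0=0$ and $p_{k-1}^\alpha(-1,t)=0$, the sum $\sum_n\lambda_{n-1}p_{k-1}^\alpha(n-1,t)$ becomes $\sum_m\lambda_m p_{k-1}^\alpha(m,t)$ and $\sum_n\mu_{n+1}p_{k-1}^\alpha(n+1,t)$ becomes $\sum_m\mu_m p_{k-1}^\alpha(m,t)$; adding these to the diagonal term, the coefficients cancel termwise to give $\sum_{n=0}^{\infty}p_k^\alpha(n,t)=I_t^\alpha(0)=0$. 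This is precisely the discrete conservation of total probability built into the generator of the process.

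The one genuine obstacle is justifying the interchange $\sum_n\sum_k=\sum_k\sum_n$, because the components $p_k^\alpha(n,t)$ alternate in sign and the series defining $p^\alpha(n,t)$ is not termwise nonnegative. To legitimize this I would establish absolute convergence of the double array over the admissible region $\{(n,k):0\le n\le k+1\}$: by Theorem \ref{cmpts}, $|p_k^\alpha(n,t)|=\psi_{n,\,k}\,t^{k\alpha}/\Gamma(k\alpha+1)$ with $\psi_{n,\,k}$ a homogeneous polynomial of degree $k$ in the rates, and I would bound $\sum_{n=0}^{k+1}\psi_{n,\,k}$ by a quantity of the form $C^k$ (coming from the at most two-fold branching in the recursion (\ref{psidef})), so that $\sum_{k}C^k t^{k\alpha}/\Gamma(k\alpha+1)=E_\alpha(Ct^\alpha)<\infty$ is a Mittag--Leffler majorant and Fubini--Tonelli applies.

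Alternatively---and more cleanly---I would invoke the subordination identity of Theorem \ref{thmsub}, which together with uniqueness of the solution identifies $p^\alpha(n,t)=\int_{0}^{\infty}p(n,x)\,\mathrm{Pr}\{T^{2\alpha}(t)\in\mathrm{d}x\}$. Since the integrand $p(n,x)\ge0$, Tonelli permits summing under the integral, and the assumed regularity of the classical BDP gives $\sum_{n=0}^{\infty}p(n,x)=1$; because $T^{2\alpha}(t)\overset{d}{=}E^\alpha(t)$ is a proper random variable its density integrates to $1$, whence $\sum_{n=0}^{\infty}p^\alpha(n,t)=\int_{0}^{\infty}\mathrm{Pr}\{T^{2\alpha}(t)\in\mathrm{d}x\}=1$. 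This route sidesteps the polynomial estimate entirely, so it is the argument I would ultimately present, using the direct cancellation computation above as the motivating heuristic.
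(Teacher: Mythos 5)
Your first computation is, in substance, the paper's own proof reorganized: the paper sums (\ref{pn}) and (\ref{p0}) over $n$, interchanges the two summations, and uses the recursion (\ref{psidef}) to collapse the inner sum, $\sum_{n=0}^{k}(-1)^{k-n+1}\psi_{n,\,k}=-\lambda_1\lambda_2\cdots\lambda_k$, which then cancels against the product terms coming from the $n-k=1$ diagonal. Your identity $\sum_{n=0}^{\infty}p_k^\alpha(n,t)=\delta_{k,0}$ is exactly this cancellation (your level-$k$ sum simply absorbs the $n=k+1$ product term, so it vanishes outright instead of telescoping), and your derivation of it by reindexing the recursion (\ref{adm2}) with the conventions $\mu_0=0$, $p^\alpha_{k-1}(-1,t)=0$ is arguably cleaner than the paper's manipulation of the $\psi_{n,\,k}$. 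The genuinely different part is the subordination argument you say you would ultimately present: the paper does not use Theorem \ref{thmsub} in this proof. That route buys rigor on the probabilistic side (Tonelli with nonnegative integrands, $\sum_n p(n,x)=1$ by the standing regularity assumption, and $T^{2\alpha}(t)$ a proper random variable), but it proves that $\mathrm{Pr}\{N(T^{2\alpha}(t))=n\}$ sums to one; to conclude the stated theorem you must still identify these probabilities with the ADM series (\ref{pn})--(\ref{p0}), which the paper only does at the level of Laplace transforms (Remark 3.3), not by a bare appeal to ``uniqueness.''

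There is one concrete flaw to flag: your proposed justification of the interchange, the bound $\sum_{n=0}^{k+1}\psi_{n,\,k}\leq C^k$, fails for unbounded state-dependent rates, which are the paper's principal examples. The recursion (\ref{psidef}) at level $k$ involves rates up to index roughly $k$, so no uniform constant $C$ exists; indeed, for linear rates with $\lambda=\mu$ the paper's own Corollary 4.3 gives $\psi_{0,\,k}=k!\,\lambda^k$ exactly, and by Stirling $k!/\Gamma(k\alpha+1)$ grows like $k^{(1-\alpha)k}$ up to exponential factors, so for $0<\alpha<1$ the series $\sum_k k!\,\lambda^k t^{k\alpha}/\Gamma(k\alpha+1)$ diverges for every $t>0$. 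There is therefore no Mittag--Leffler majorant, and in fact the series (\ref{pn})--(\ref{p0}) are not absolutely convergent in the linear case with $\alpha<1$; they are formal (Borel-type resummable) series, as the paper's Remark 4.3 implicitly shows by resumming $\sum_k(-1)^{k+1}k!(\lambda t^\alpha)^k/\Gamma(k\alpha+1)$ into an integral of $E_{\alpha,1}$. So the interchange in your first route --- and equally in the paper's proof, which performs it without comment --- is a term-by-term formal manipulation, legitimate as stated only under additional hypotheses such as bounded rates. Your instinct to fall back on the subordination argument is therefore sound: of the arguments on the table it is the only one that is rigorous for the actual probabilities, provided you add the Laplace-transform identification of Remark 3.3 as the bridge to the series expressions.
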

\begin{proof}
	For $t\ge0$, we have
	\begin{align*}
		\sum_{n=0}^{\infty}p^\alpha(n,t)&=\sum_{k=1}^{\infty}(-1)^{k+1}
		\psi_{0,\,k}\big((\lambda_j)_{1}^{a_{0,k}},(\mu_j)_{1}^{b_{0,k}}\big)\frac{t^{k\alpha}}{\Gamma(k\alpha+1)}+\sum_{n=1}^{\infty}\prod_{j=1}^{n-1}\lambda_j\frac{t^{(n-1)\alpha}}{\Gamma((n-1)\alpha+1)}\\
		&\ \ +\sum_{n=1}^{\infty}\sum_{k=n}^{\infty}(-1)^{k-n+1}
		\psi_{n,\,k}\big((\lambda_j)_{1}^{a_{n,\,k}},(\mu_j)_{1}^{b_{n,\,k}}\big)\frac{t^{k\alpha}}{\Gamma(k\alpha+1)}\\
		&=\sum_{k=1}^{\infty}\sum_{n=0}^{k}(-1)^{k-n+1}
		\psi_{n,\,k}\big((\lambda_j)_{1}^{a_{n,\,k}},(\mu_j)_{1}^{b_{n,\,k}}\big)\frac{t^{k\alpha}}{\Gamma(k\alpha+1)}+\sum_{n=1}^{\infty}\prod_{j=1}^{n-1}\lambda_j\frac{t^{(n-1)\alpha}}{\Gamma((n-1)\alpha+1)}\\
		&=1+\sum_{k=1}^{\infty}\sum_{n=0}^{k}(-1)^{k-n+1}
		\psi_{n,\,k}\big((\lambda_j)_{1}^{a_{n,\,k}},(\mu_j)_{1}^{b_{n,\,k}}\big)\frac{t^{k\alpha}}{\Gamma(k\alpha+1)}+\sum_{n=1}^{\infty}\prod_{j=1}^{n}\lambda_j\frac{t^{n\alpha}}{\Gamma(n\alpha+1)},
	\end{align*}
	where using (\ref{psidef}), we have
	\begin{align*}
		\sum_{n=0}^{k}(-1)^{k-n+1}&
		\psi_{n,\,k}\big((\lambda_j)_{1}^{a_{n,\,k}},(\mu_j)_{1}^{b_{n,\,k}}\big)\\
		&=(-1)^{k+1}\psi_{0,\,k}\big((\lambda_j)_{1}^{a_{0,k}},(\mu_j)_{1}^{b_{0,k}}\big)+(-1)^k\psi_{1,\,k}\big((\lambda_j)_{1}^{a_{1,k}},(\mu_j)_{1}^{b_{1,k}}\big)\\
		&\ \ +\sum_{n=2}^{k}\big(\Lambda_n\psi_{n,k-1}\big((\lambda_j)_{1}^{a_{n,k-1}},(\mu_j)_{1}^{b_{n,k-1}}\big)+\lambda_{n-1}\psi_{n-1,k-1}\big((\lambda_j)_{1}^{a_{n-1,k-1}},(\mu_j)_{1}^{b_{n-1,k-1}}\big)\\
		&\ \ +\mu_{n+1}\psi_{n+1,k-1}\big((\lambda_j)_{1}^{a_{n+1,k-1}},(\mu_j)_{1}^{b_{n+1,k-1}}\big)\big)\\
		&=(-1)^{k+1}\psi_{0,\,k}\big((\lambda_j)_{1}^{a_{0,k}},(\mu_j)_{1}^{b_{0,k}}\big)+(-1)^k\psi_{1,\,k}\big((\lambda_j)_{1}^{a_{1,k}},(\mu_j)_{1}^{b_{1,k}}\big)\\
		&\ \ +\sum_{n=2}^{k}\Lambda_n\psi_{n,k-1}\big((\lambda_j)_{1}^{a_{n,k-1}},(\mu_j)_{1}^{b_{n,k-1}}\big)+\sum_{n=1}^{k-1}\lambda_{n}\psi_{n,k-1}\big((\lambda_j)_{1}^{a_{n,k-1}},(\mu_j)_{1}^{b_{n,k-1}}\big)\\
		&\ \ +\sum_{n=3}^{k+1}\mu_{n}\psi_{n,k-1}\big((\lambda_j)_{1}^{a_{n,k-1}},(\mu_j)_{1}^{b_{n,k-1}}\big)\big)\\
		&=(-1)^{k+1}\psi_{0,\,k}\big((\lambda_j)_{1}^{a_{0,k}},(\mu_j)_{1}^{b_{0,k}}\big)+(-1)^k\psi_{1,\,k}\big((\lambda_j)_{1}^{a_{1,k}},(\mu_j)_{1}^{b_{1,k}}\big)\\
		&\ \ +(-1)^{k-1}\mu_2\psi_{2,k-1}\big((\lambda_j)_{1}^{a_{2,k-1}},(\mu_j)_{1}^{b_{2,k-1}}\big)-\lambda_k\psi_{k,k-1}\big((\lambda_j)_{1}^{a_{k,k-1}},(\mu_j)_{1}^{b_{k,k-1}}\big)\\
		&\ \ -(-1)^k\lambda_1\psi_{1,k-1}\big((\lambda_j)_{1}^{a_{1,k-1}},(\mu_j)_{1}^{b_{1,k-1}}\big)\\
		&=-\lambda_k\psi_{k,k-1}\big((\lambda_j)_{1}^{a_{k,k-1}},(\mu_j)_{1}^{b_{k,k-1}}\big)=-\lambda_1\lambda_2\cdots\lambda_k.
	\end{align*}
	Thus, $\sum_{n=0}^{\infty}p^\alpha(n,t)=1$. This completes the proof.
\end{proof}
\begin{remark}
	We note that given a BDP $\{N(t)\}_{t\ge0}$ and an independent inverse $\alpha$-stable subordinator $\{E^\alpha(t)\}_{t\ge0}$, $\alpha\in(0,1)$, following the approach of Meerschaert \textit{et al.} (2011), we can define a time-changed BDP $\{N(E^\alpha(t))\}_{t\ge0}$. In view of Remark \ref{re2.2}, it is equal in distribution with FBDP $\{N^\alpha(t)\}_{t\ge0}$ whenever $0<\alpha<1$. Therefore, the distribution of FBDP can also be derived as follows:
	\begin{equation}\label{lapm}
		p^\alpha(n,t)=\mathrm{Pr}\{N(E^\alpha(t))=n\}=\int_{0}^{\infty}p(n,x)\mathrm{Pr}\{E^\alpha(t)\in\mathrm{d}x\},\ n\ge0,\ \alpha\in(0,1),
	\end{equation}
	where $p(n,x)$ denotes the distribution of BDP. The Laplace transform of (\ref{lapm}) is given by
	\begin{equation*}
		\int_{0}^{\infty}e^{-wt}p^\alpha(n,t)\,\mathrm{d}t=w^{\alpha-1}\int_{0}^{\infty}e^{-w^\alpha x}p(n,x)\,\mathrm{d}x,\ n\ge0,\ w>0,
	\end{equation*}
	where we have used (\ref{ainvsublap}).
	 Hence, using (\ref{bdppn}) and (\ref{bdpp0}), we get
	\begin{equation*}
		\int_{0}^{\infty}e^{-wt}p^\alpha(n,t)\,\mathrm{d}t=\begin{cases}
			\sum_{k=1}^{\infty}(-1)^{k+1}
			\psi_{0,\,k}\big((\lambda_j)_{1}^{a_{0,k}},(\mu_j)_{1}^{b_{0,k}}\big)\frac{1}{w^{k\alpha+1}},\ n=0,\vspace{0.2cm}\\
			\frac{\prod_{j=1}^{n-1}\lambda_j}{w^{(n-1)\alpha+1}}+\sum_{k=n}^{\infty}(-1)^{k-n+1}
			\psi_{n,\,k}\big((\lambda_j)_{1}^{a_{n,\,k}},(\mu_j)_{1}^{b_{n,\,k}}\big)\frac{1}{w^{k\alpha+1}},\ n\ge1,
		\end{cases}
	\end{equation*}
	whose inverse Laplace transform coincides with (\ref{pn}) and (\ref{p0}) for $n\ge1$ and $n=0$, respectively.
\end{remark}

The following result is a consequence of Theorem \ref{thmpmf}. It can be established by adding the series components of the state probabilities of FBDP whenever the rate of birth is equal to the rate of death.
\begin{corollary}
	If the birth and death rates of FBDP are equal at each state, that is, $\lambda_n=\mu_n$ for all $n\ge1$ then its state probabilities are
	\begin{equation}
		p^\alpha(n,t)=\frac{t^{(n-1)\alpha}}{\Gamma((n-1)\alpha+1)}\prod_{j=1}^{n-1}\lambda_j+\sum_{k=n}^{\infty}(-1)^{k-n+1}\xi_{n,\,k}\left(\lambda_1,\lambda_2,\dots,\lambda_{\left[\frac{n+k+1}{2}\right]}\right)\frac{t^{k\alpha}}{\Gamma(k\alpha+1)},\ n\ge1
	\end{equation}
	and its extinction probability is given by
	\begin{equation}
		p^\alpha(0,t)=\sum_{k=1}^{\infty}(-1)^{k+1}\xi_{n,\,k}\left(\lambda_1,\lambda_2,\dots,\lambda_{\left[\frac{n+k+1}{2}\right]}\right)\frac{t^{k\alpha}}{\Gamma(k\alpha+1)},
	\end{equation}
	where $[\cdot]$ is the greatest integer function and $\xi_{n,\,k}\left(\lambda_1,\lambda_2,\dots,\lambda_{\left[\frac{n+k+1}{2}\right]}\right)$ are functions of $\lambda_1,\lambda_2,\dots$, $\lambda_{\left[\frac{n+k+1}{2}\right]}$ such that $\xi_{0,\,0}=0$, $\xi_{1,\,0}=1$, $\xi_{0,\,1}=\lambda_1$ and
	{\small\begin{equation*}
		\xi_{n,\,k+1}\big(\lambda_1,\lambda_2,\dots,\lambda_{\left[\frac{n+k}{2}+1\right]}\big)=\begin{cases}
			0,\ n> k+2,\vspace{0.2cm}\\
			\lambda_1\xi_{1,\,k}\big(\lambda_1,\lambda_2,\dots,\lambda_{\left[\frac{k}{2}+1\right]}\big),\ n=0,\,k\ge1,\vspace{0.2cm}\\
			2\lambda_1\xi_{1,\,k}\big(\lambda_1,\lambda_2,\dots,\lambda_{\left[\frac{k}{2}+1\right]}\big)+\lambda_2\xi_{2,\,k}\big(\lambda_1,\lambda_2,\dots,\lambda_{\left[\frac{k+3}{2}\right]}\big),\ n=1,\,k\ge0,\vspace{0.2cm}\\
			2\lambda_n\xi_{n,\,k}\big(\lambda_1,\lambda_2,\dots,\lambda_{\left[\frac{n+k+1}{2}\right]}\big)+\lambda_{n-1}\xi_{n-1,\,k}\big(\lambda_1,\lambda_2,\dots,\lambda_{\left[\frac{n+k}{2}\right]}\big)\\
			\hspace{2.5cm}+\lambda_{n+1}\xi_{n+1,\,k}\big(\lambda_1,\lambda_2,\dots,\lambda_{\left[\frac{n+k}{2}+1\right]}\big),\ n\ge2,\,k> n-2,\vspace{0.2cm}\\
			\lambda_1\lambda_{2}\cdots\lambda_{n-1},\ n\ge2,\,n-k=2.
		\end{cases}
	\end{equation*}}
\end{corollary}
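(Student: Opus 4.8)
The plan is to obtain this corollary as a direct specialization of Theorem~\ref{thmpmf} to the symmetric case $\lambda_n=\mu_n$. First I would substitute $\mu_j=\lambda_j$ for every $j\ge1$ into the defining recurrence (\ref{psidef}) and define $\xi_{n,\,k}$ to be the resulting image of $\psi_{n,\,k}$, that is,
\begin{equation*}
	\xi_{n,\,k}\big(\lambda_1,\dots,\lambda_{[(n+k+1)/2]}\big):=\psi_{n,\,k}\big((\lambda_j)_{1}^{a_{n,\,k}},(\mu_j)_{1}^{b_{n,\,k}}\big)\Big|_{\mu_j=\lambda_j}.
\end{equation*}
Within the range $k\ge n$ occurring in the sums of (\ref{pn}) and (\ref{p0}) one has $n-k<1$, where $b_{n,\,k}=[(n+k+1)/2]\ge a_{n,\,k}$, so identifying the two families of variables leaves a polynomial in $\lambda_1,\dots,\lambda_{[(n+k+1)/2]}$ only, which is precisely the claimed list of arguments. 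The separated product term $\prod_{j=1}^{n-1}\lambda_j$ (the $n-k=1$ case of (\ref{secomps})) already contains no $\mu$'s and is unchanged. Substituting these $\xi_{n,\,k}$ into (\ref{pn}) and (\ref{p0}) then yields the two displayed formulas for $p^\alpha(n,t)$ and $p^\alpha(0,t)$.

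It then remains to verify that the $\xi_{n,\,k}$ so defined satisfy the recurrence stated in the corollary. Under $\lambda_n=\mu_n$ one has $\Lambda_n=\lambda_n+\mu_n=2\lambda_n$ and $\mu_{n+1}=\lambda_{n+1}$, and I would simply feed these two identities into each branch of (\ref{psidef}). The initial data $\psi_{0,0}=0$, $\psi_{1,0}=1$, $\psi_{0,1}=\mu_1$ become $\xi_{0,0}=0$, $\xi_{1,0}=1$, $\xi_{0,1}=\lambda_1$; the $n=0$ branch turns into $\xi_{0,\,k+1}=\lambda_1\xi_{1,\,k}$; the $n=1$ branch into $\xi_{1,\,k+1}=2\lambda_1\xi_{1,\,k}+\lambda_2\xi_{2,\,k}$; and the generic $n\ge2$ branch into $\xi_{n,\,k+1}=2\lambda_n\xi_{n,\,k}+\lambda_{n-1}\xi_{n-1,\,k}+\lambda_{n+1}\xi_{n+1,\,k}$. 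The two remaining branches $n>k+2$ (giving $0$) and $n-k=2$ (giving $\lambda_1\lambda_2\cdots\lambda_{n-1}$) are already free of $\mu$'s, so they carry over verbatim. This reproduces the recurrence in the statement exactly.

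The only point requiring a little care---and the place I would concentrate the verification---is the bookkeeping of the greatest-integer index bounds: one must confirm that after the identification $\mu_j=\lambda_j$ the highest-indexed variable appearing in each branch does not exceed $[(n+k+1)/2]$, i.e., that the arguments advertised on the left of each line of the $\xi$-recurrence (such as $\lambda_{[(n+k)/2+1]}$ for $\xi_{n,\,k+1}$) are consistent with those inherited from the three terms on the right through $a_{n\pm1,\,k}$ and $b_{n\pm1,\,k}$. Since these indices are governed by (\ref{ank}) and (\ref{bnk}) and satisfy $a_{n,\,k}\le b_{n,\,k}$ off the line $n=k+1$, this reduces to elementary comparisons of $[\,\cdot\,]$ expressions. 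With the recurrence and initial data matched, the corollary follows from Theorem~\ref{thmpmf}.
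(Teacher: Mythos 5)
Your proposal is correct and matches the paper's approach: the paper likewise treats this corollary as a direct consequence of Theorem \ref{thmpmf}, obtained by setting $\lambda_n=\mu_n$ (so $\Lambda_n=2\lambda_n$, $\mu_{n+1}=\lambda_{n+1}$) in the recurrence (\ref{psidef}) and in the series (\ref{pn})--(\ref{p0}), which is exactly your specialization $\xi_{n,\,k}=\psi_{n,\,k}\big|_{\mu_j=\lambda_j}$. Your additional check that the index bounds $a_{n,\,k}\le b_{n,\,k}=\left[\frac{n+k+1}{2}\right]$ make the variable list collapse correctly is the right bookkeeping and is consistent with what the paper leaves implicit.
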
 
\begin{remark}
	Let $\mathscr{T}^\alpha=\inf\{t>0:N^\alpha(t)=0\}$ be the first passage time of FBDP to $0$th state, that is, it is the extinction time of FBDP. Then, its distribution function is given by 
	\begin{equation*}
		\mathrm{Pr}\{\mathscr{T}^\alpha\leq t\}=\mathrm{Pr}\{N^\alpha(t)=0\}=\sum_{k=1}^{\infty}(-1)^{k+1}
		\psi_{0,\,k}\big((\lambda_j)_{1}^{a_{0,k}},(\mu_j)_{1}^{b_{0,k}}\big)\frac{t^{k\alpha}}{\Gamma(k\alpha+1)},\ t>0.
	\end{equation*}
\end{remark}

\begin{proposition}
	The pgf  $G^\alpha(u,t)=\mathbb{E}u^{N^\alpha(t)}$, $|u|\leq1$ of FBDP is
	\begin{equation}\label{fbdppgf}
		G^\alpha(u,t)=u+\sum_{n=1}^{\infty}\sum_{k=n-1}^{\infty}(-1)^{k-n+1}u^{n-1}(u-1)(\lambda_nu-\mu_n)
		\psi_{n,\,k}\big((\lambda_j)_{1}^{a_{n,\,k}},(\mu_j)_{1}^{b_{n,\,k}}\big)\frac{t^{(k+1)\alpha}}{\Gamma((k+1)\alpha+1)},
	\end{equation}
 where $\psi_{n,\,k}$'s are polynomials as defined in (\ref{psidef}).
\end{proposition}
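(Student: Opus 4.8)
The plan is to obtain $G^\alpha(u,t)$ by generating-function summation of the ADM series components, since (as already noted in the text) no governing equation for the pgf is available. Introduce the partial generating functions $G_k^\alpha(u,t)=\sum_{n=0}^\infty u^n p_k^\alpha(n,t)$, so that $G^\alpha(u,t)=\sum_{k=0}^\infty G_k^\alpha(u,t)$, and read off from (\ref{adm1}) that $G_0^\alpha(u,t)=u$ because $p_0^\alpha(n,t)=\delta_{n,1}$. The task then reduces to evaluating $\sum_{k\ge0}G_{k+1}^\alpha(u,t)$ and matching it with the double series in (\ref{fbdppgf}).

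First I would multiply the recursion (\ref{adm2}), with $k$ replaced by $k+1$, by $u^n$ and sum over $n\ge0$, pulling the linear operator $I_t^\alpha$ outside the sum. A single index shift in the two transport terms — legitimate because $\lambda_0=\mu_0=0$ and $p_k^\alpha(-1,t)=0$ annihilate the boundary contributions — collapses the bracket to a closed expression:
\begin{equation*}
	G_{k+1}^\alpha(u,t)=I_t^\alpha\Big((u-1)\sum_{n=0}^\infty u^{n-1}(\lambda_n u-\mu_n)\,p_k^\alpha(n,t)\Big).
\end{equation*}
This is precisely the step that manufactures the characteristic factor $(u-1)(\lambda_n u-\mu_n)u^{n-1}$ appearing in (\ref{fbdppgf}).

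Next I would substitute the closed form of the series components from (\ref{secomps}). The key simplification is that, for $n\ge1$, all nonzero cases collapse to the single formula $p_k^\alpha(n,t)=(-1)^{k-n+1}\psi_{n,k}\,t^{k\alpha}/\Gamma(k\alpha+1)$: the boundary case $n-k=1$ fits because $\psi_{n,n-1}=\prod_{j=1}^{n-1}\lambda_j$ (from the last branch of (\ref{psidef})) carries sign $(-1)^{0}=1$, while the $n=0$ term is irrelevant since its coefficient $\lambda_0u-\mu_0$ vanishes. Applying (\ref{frintresult}) with $\beta=k\alpha+1$ raises $t^{k\alpha}/\Gamma(k\alpha+1)$ to $t^{(k+1)\alpha}/\Gamma((k+1)\alpha+1)$, giving
\begin{equation*}
	G_{k+1}^\alpha(u,t)=(u-1)\sum_{n=1}^\infty(-1)^{k-n+1}u^{n-1}(\lambda_n u-\mu_n)\psi_{n,k}\frac{t^{(k+1)\alpha}}{\Gamma((k+1)\alpha+1)}.
\end{equation*}
Finally I would add $G_0^\alpha(u,t)=u$, interchange the summations over $n$ and $k$, and use $\psi_{n,k}=0$ for $k\le n-2$ (Proposition \ref{prop1}) to move the inner lower limit to $k=n-1$, which reproduces (\ref{fbdppgf}) exactly. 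The main obstacle I anticipate is analytic rather than algebraic: justifying the termwise operations — the interchange of $\sum_n$ with $I_t^\alpha$, the two index shifts, and the concluding interchange of $\sum_n$ and $\sum_k$ — all of which rest on absolute convergence of the double series for $|u|\le1$. I would secure this by bounding the homogeneous polynomials $\psi_{n,k}$ of degree $k$ in the rates and comparing against a convergent Mittag--Leffler-type majorant in $t$, so that Fubini--Tonelli applies on each compact $t$-interval.
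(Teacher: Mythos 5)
Your proof is correct, and it takes a genuinely different route from the paper's. The paper first derives the relation $\mathcal{D}_t^\alpha G^\alpha(u,t)=\sum_{n\ge1}u^{n-1}(u-1)(\lambda_nu-\mu_n)p^\alpha(n,t)$ by multiplying (\ref{diffequ}) by $u^n$ and summing, then takes Laplace transforms in $t$, substitutes the full state probabilities already obtained in Theorem \ref{thmpmf}, and inverts the transform term by term. You instead stay entirely inside the ADM framework: you introduce the partial generating functions $G_k^\alpha(u,t)=\sum_n u^n p_k^\alpha(n,t)$ (an object the paper never uses), turn the component recursion (\ref{adm2}) into $G_{k+1}^\alpha=I_t^\alpha\big((u-1)\sum_n u^{n-1}(\lambda_nu-\mu_n)p_k^\alpha(n,t)\big)$, feed in the component formulas of Theorem \ref{cmpts}, and apply (\ref{frintresult}) directly — no Laplace inversion at all. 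Both arguments pivot on the same algebraic identity $\lambda_nu^2-(\lambda_n+\mu_n)u+\mu_n=(u-1)(\lambda_nu-\mu_n)$, and your observation that the boundary case $k=n-1$ folds into the unified sign formula (since $\psi_{n,n-1}=\lambda_1\cdots\lambda_{n-1}$ with sign $(-1)^0$) is exactly what lets the inner sum start at $k=n-1$, matching (\ref{fbdppgf}). Two small refinements: the vanishing $\psi_{n,k}=0$ for $k\le n-2$ is the first branch of (\ref{psidef}) rather than Proposition \ref{prop1} (the latter concerns the components $p_k^\alpha(n,t)$, though the content is equivalent); and your main analytic worry — interchanging $\sum_n$ with $I_t^\alpha$ — is actually vacuous, because Proposition \ref{prop1} makes the sum over $n$ \emph{finite} for each fixed $k$ (only $n\le k+1$ contribute), so linearity of $I_t^\alpha$ suffices. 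The only genuine interchange left is $\sum_n\leftrightarrow\sum_k$ at the end, which is the same termwise manipulation of the double series that the paper performs implicitly when inverting the Laplace transform, so you are on equal footing there; your Mittag--Leffler majorant remark would make either proof rigorous.
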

\begin{proof}
	For $t\ge0$, we have $G^\alpha(u,t)=\sum_{n=0}^{\infty}u^np^\alpha(n,t)$. So, from (\ref{initial}), it follows that $G^\alpha(u,0)=u$.
	
	On multiplying  with $u^n$ on both sides of (\ref{diffequ}) and summing over $n=0,1,2\dots$, we get the following fractional differential equation:
	\begin{equation}\label{pgfeq}
		\mathcal{D}_t^\alpha G^\alpha(u,t)=\sum_{n=1}^{\infty}u^{n-1}(u-1)(\lambda_nu-\mu_n)p^\alpha(n,t),\ |u|\leq1,
	\end{equation}
	with initial condition $G^\eta(u,0)=u$.
	
	On taking the Laplace transform on both sides of (\ref{pgfeq}) and using (\ref{frderlap}), we get
	\begin{align*}
		w^\alpha\int_{0}^{\infty}e^{-wt}&G^\alpha(u,t)\,\mathrm{d}t-w^{\alpha-1}u\\
		&=\sum_{n=1}^{\infty}u^{n-1}(u-1)(\lambda_nu-\mu_n)\frac{\prod_{j=1}^{n-1}\lambda_j}{w^{(n-1)\alpha+1}}\\
		&\ \ +\sum_{n=1}^{\infty}u^{n-1}(u-1)(\lambda_nu-\mu_n)\sum_{k=n}^{\infty}
		\psi_{n,\,k}\big((\lambda_j)_{1}^{a_{n,\,k}},(\mu_j)_{1}^{b_{n,\,k}}\big)\frac{(-1)^{k-n+1}}{w^{k\alpha+1}},\ w>0.
	\end{align*}
	So,
	\begin{align*}
		\int_{0}^{\infty}e^{-wt}G^\alpha(u,t)\,\mathrm{d}t&=\frac{u}{w}+\sum_{n=1}^{\infty}u^{n-1}(u-1)(\lambda_nu-\mu_n)\frac{\prod_{j=1}^{n-1}\lambda_j}{w^{n\alpha+1}}\\
		&\ \ +\sum_{n=1}^{\infty}u^{n-1}(u-1)(\lambda_nu-\mu_n)\sum_{k=n}^{\infty}
		\psi_{n,\,k}\big((\lambda_j)_{1}^{a_{n,\,k}},(\mu_j)_{1}^{b_{n,\,k}}\big)\frac{(-1)^{k-n+1}}{w^{(k+1)\alpha+1}},
	\end{align*}
	whose inversion yields the required result.
\end{proof}
\begin{remark}
	On taking the derivative of (\ref{fbdppgf}) with respect to $u$ and substituting $u=1$, we get the mean $\mathbb{E}N^\alpha(t)=({\partial}/{\partial u})G^\alpha(u,t)|_{u=1}$ of FBDP as follows:
	\begin{equation*}
		\mathbb{E}N^\alpha(t)=1+\sum_{n=1}^{\infty}\sum_{k=n-1}^{\infty}(-1)^{k-n+1}(\lambda_n-\mu_n)
		\psi_{n,\,k}\big((\lambda_j)_{1}^{a_{n,\,k}},(\mu_j)_{1}^{b_{n,\,k}}\big)\frac{t^{(k+1)\alpha}}{\Gamma((k+1)\alpha+1)},\ t\ge0.
	\end{equation*}
	In particular, when the birth and death rates are equal then the mean values of FBDP is one.
	
	Further, on taking $r$th order derivative of (\ref{fbdppgf}) with respect to $u$ and substituting $u=1$ gives the $r$th factorial moments of FBDP
	\begin{equation*}
		\mathbb{E}\prod_{j=0}^{r-1}(N^\alpha(t)-j)=\sum_{n=r}^{\infty}\sum_{k=n-1}^{\infty}(-1)^{k-n+1}(\lambda_n-\mu_n)
		\psi_{n,\,k}\big((\lambda_j)_{1}^{a_{n,\,k}},(\mu_j)_{1}^{b_{n,\,k}}\big)\frac{t^{(k+1)\alpha}}{\Gamma((k+1)\alpha+1)},\ r\ge2.
	\end{equation*} 
\end{remark}
\section{FBDP with linear rates}\label{sec4}
 In this section, we consider the case of linear birth and death rates for which the function $\psi_{n,\,k}\big((\lambda_j)_{1}^{a_{n,\,k}},(\mu_j)_{1}^{b_{n,\,k}}\big)$ can be explicitly obtained. 
 Let $\lambda_n=n\lambda$ and $\mu_n=n\mu$ for all $n\ge1$, where $\lambda$ and $\mu$ are some fixed positive constants. Then, we call it the linear FBDP and denote it by $\{\tilde{{N}}^\alpha(t)\}_{t\ge0}$, $\alpha\in(0,1]$. Orsingher and Polito (2011) used the state probability of linear BDP and a similar subordinate relationship as given in (\ref{subr}) to derive the state probabilities of linear FBDP. Here, we obtained a series representation $\tilde{p}^\alpha(n,t)=\sum_{k=0}^{\infty}\tilde{p}_k^\alpha(n,t)$ of its state probabilities $\tilde{p}^\alpha(n,t)=\mathrm{Pr}\{\tilde{{N}}^\alpha(t)=n\}$, $n\ge0$ in three different cases \textit{viz} $\lambda>\mu$, $\lambda<\mu$ and $\lambda=\mu$.

On substituting $\lambda_n=n\lambda$ and $\mu_n=n\mu$ for all $n\ge1$ in (\ref{ncons1}) and solving it recursively, we get the series components $\tilde{p}^\alpha_k(n,t)$, $k\ge0$ as follows:
	\begin{align*}
		\tilde{p}^\alpha_1(0,t)&=\frac{\mu t^\alpha}{\Gamma(\alpha+1)},\ \ 
		\tilde{p}_1^\alpha(1,t)=\frac{-(\lambda+\mu)t^\alpha}{\Gamma(\alpha+1)},\ \ 
		\tilde{p}^\alpha_2(0,t)=\frac{-\mu(\lambda+\mu)t^{2\alpha}}{\Gamma(2\alpha+1)},\\
		\tilde{p}^\alpha_1(2,t)&=\frac{\lambda t^\alpha}{\Gamma(\alpha+1)},\ \ 
		\tilde{p}_2^\alpha(1,t)=\frac{((\lambda+\mu)^2+2\lambda\mu)^{2\alpha}}{\Gamma(2\alpha+1)},\ \ 
		p_3^\alpha(0,t)=\frac{\mu((\lambda+\mu)^2+2\lambda\mu)t^{3\alpha}}{\Gamma(3\alpha+1)},\\
		p_2(2,t)&=\frac{-3\lambda(\lambda+\mu)t^{2\alpha}}{\Gamma(2\alpha+1)},\ \ 
		p_3(1,t)=(-(\lambda+\mu)^3-8\lambda\mu(\lambda+\mu))\frac{t^{3\alpha}}{\Gamma(3\alpha+1)},\\
		p_4(0,t)&=(-\mu(\lambda+\mu)^3-8\lambda\mu^2(\lambda+\mu))\frac{t^{4\alpha}}{\Gamma(4\alpha+1)},\ \ 
		p_2(3,t)= \frac{2\lambda^2t^{2\alpha}}{\Gamma(2\alpha+1)},\dots.
	\end{align*}
	Note that in the case of linear FBDP the sequence of functions $\{\psi_{n,\,k}\}_{n\ge0,\,k\ge0}$ defined in (\ref{psidef}) reduce to the homogeneous polynomials of degree $k$ in $\lambda$ and $\mu$.
	
  The following results are consequences of Theorem \ref{cmpts} that provide the closed form expressions for series components in the case of linear birth and death rates.
\begin{corollary}\label{thm2}
	If the rate of birth is higher than the rate of death in linear FBDP, that is, $\lambda>\mu$ then the series components of its state probabilities are given by
	\begin{equation}\label{thml1}
		\tilde{p}^\alpha_k(n,t)=\begin{cases}
			0,\ n=k=0\ \text{or} \ n-k\ge2,\vspace*{0.2cm}\\
			(-1)^k(\lambda-\mu)^k\sum_{l=0}^{\infty}\big(\frac{\mu}{\lambda}\big)^{l+1}(l^k-(l+1)^k)\frac{t^{k\alpha}}{\Gamma(\alpha k+1)},\ n=0,\ k\ge1,\vspace*{0.2cm}\\
			(-1)^k\frac{(\lambda-\mu)^{k+2}}{\lambda^2}\sum_{l=0}^{\infty}\big(\frac{\mu}{\lambda}\big)^l\binom{n+l}{l}\sum_{r=0}^{n-1}(-1)^r\binom{n-1}{r}\\
			\hspace{5cm}\cdot(r+l+1)^k\frac{t^{k\alpha}}{\Gamma(\alpha k+1)},\ n\ge1,\,n-k\leq1.
		\end{cases}
	\end{equation}
\end{corollary}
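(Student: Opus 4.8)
The plan is to exploit the structure guaranteed by Theorem \ref{cmpts}: for linear rates each series component has the form $\tilde{p}_k^\alpha(n,t)=c_{n,\,k}\,t^{k\alpha}/\Gamma(k\alpha+1)$, where the constant $c_{n,\,k}$ is a polynomial in $\lambda,\mu$ that does \emph{not} depend on $\alpha$ (up to sign it is $\psi_{n,\,k}$ evaluated at $\lambda_j=j\lambda,\ \mu_j=j\mu$, or the product $\prod_{j=1}^{n-1}\lambda_j$). Hence it suffices to identify $c_{n,\,k}$, and the cleanest route is to specialise to $\alpha=1$. Then $\sum_{k\ge0}c_{n,\,k}\,t^{k}/k!=\tilde{p}(n,t)$ is exactly the state probability of the classical linear BDP, so $c_{n,\,k}$ is $k!$ times its $k$th Taylor coefficient at $t=0$. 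Once $c_{n,\,k}$ is found, substituting it back into $c_{n,\,k}\,t^{k\alpha}/\Gamma(k\alpha+1)$ yields the stated expressions for every $\alpha\in(0,1]$, while the vanishing cases ($n=k=0$ and $n-k\ge2$) come directly from (\ref{adm1}) and Proposition \ref{prop1}.

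First I would recall the closed form of the classical linear BDP for $\lambda>\mu$ (Bailey (1964)). Writing $\rho=\mu/\lambda\in(0,1)$ and $\delta=\lambda-\mu>0$, these read
\begin{equation*}
\tilde{p}(0,t)=\rho\,\frac{e^{\delta t}-1}{e^{\delta t}-\rho},\qquad
\tilde{p}(n,t)=(1-\rho)^2\,\frac{e^{\delta t}(e^{\delta t}-1)^{n-1}}{(e^{\delta t}-\rho)^{n+1}},\ n\ge1.
\end{equation*}
Since $e^{\delta t}-\rho\neq0$ near $t=0$, both are analytic there, so their Taylor coefficients are well defined. The key manipulation is the binomial expansion $(e^{\delta t}-\rho)^{-(n+1)}=\sum_{l\ge0}\binom{n+l}{l}\rho^l e^{-(n+1+l)\delta t}$, valid because $\rho e^{-\delta t}<1$ in a neighbourhood of $0$, together with $e^{\delta t}(e^{\delta t}-1)^{n-1}=\sum_{r=0}^{n-1}\binom{n-1}{r}(-1)^{n-1-r}e^{(r+1)\delta t}$.

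Next I would substitute these expansions and extract the coefficient of $t^k/k!$, using that this coefficient in $e^{m\delta t}$ equals $(m\delta)^k$. For $n=0$ this gives at once $c_{0,\,k}=(-1)^k(\lambda-\mu)^k\sum_{l\ge0}(\mu/\lambda)^{l+1}(l^k-(l+1)^k)$, the first stated formula. For $n\ge1$ the same procedure yields $c_{n,\,k}=(1-\rho)^2\delta^k\sum_{l\ge0}\binom{n+l}{l}\rho^l\sum_{r=0}^{n-1}\binom{n-1}{r}(-1)^{n-1-r}(r-n-l)^k$; here $(1-\rho)^2\delta^k=(\lambda-\mu)^{k+2}/\lambda^2$, and the reindexing $r\mapsto n-1-r$ converts $(r-n-l)^k$ into $(-1)^k(r+l+1)^k$ (the two factors $(-1)^{n-1}$, one from the original sign and one created by the substitution, cancel), giving the second stated formula.

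I expect the only genuinely delicate point to be the justification of the term-by-term coefficient extraction, namely interchanging the infinite $l$-sum coming from the binomial expansion with the Taylor expansion of the exponentials. This is legitimate because the binomial series converges absolutely and uniformly on a neighbourhood of $t=0$ (as $\rho<1$ forces $\rho e^{-\delta t}<1$ there), so the double series in $(l,r)$ may be rearranged; moreover the final $l$-sum converges since it is of the form $\sum_{l}\rho^l\cdot(\text{a polynomial of degree }k\text{ in }l)$ with $0<\rho<1$. A routine sanity check (for instance $n=1,\ k=0$ gives $c_{1,0}=1$, and $n=2,\ k=1$ gives $c_{2,1}=\lambda$) confirms both that the boundary line $n-k=1$ is reproduced correctly, where the unified formula must collapse to $\prod_{j=1}^{n-1}\lambda_j=(n-1)!\lambda^{n-1}$, and that all signs are right.
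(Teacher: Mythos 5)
Your proof is correct, but it takes a genuinely different route from the paper's. The paper's (largely omitted) argument stays inside the recursive ADM framework: it exhibits the closed form (\ref{lnemu}) taken by the functions $\psi_{n,\,k}$ of (\ref{psidef}) when $\lambda_n=n\lambda$ and $\mu_n=n\mu$, and verifies it by the same induction on $k$ as in Theorem \ref{cmpts}, so the answer has to be known (or guessed) in advance and then checked against the three-term recursion. You instead use Theorem \ref{cmpts} only for its structural content --- each component equals $c_{n,\,k}\,t^{k\alpha}/\Gamma(k\alpha+1)$ with $c_{n,\,k}$ free of $\alpha$ --- and then \emph{derive} $c_{n,\,k}$ by specialising to $\alpha=1$ and Taylor-expanding Bailey's classical solution via the negative binomial series for $(e^{\delta t}-\rho)^{-(n+1)}$ (in your notation $\rho=\mu/\lambda$, $\delta=\lambda-\mu$). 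Your algebra is right: the reindexing $r\mapsto n-1-r$ and the sign bookkeeping produce exactly $(-1)^{k-n+1}\psi_{n,\,k}$ with $\psi_{n,\,k}$ as in (\ref{lnemu}); your unified formula also returns $0$ automatically when $n-k\ge2$ (since $\tilde{p}(n,t)=O(t^{n-1})$ at $t=0$, consistently with Proposition \ref{prop1}) and $(n-1)!\lambda^{n-1}$ on the line $n-k=1$; and the rearrangement of the double series is legitimate by absolute convergence whenever $\rho e^{\delta|t|}<1$, as you indicate. What your route buys is a constructive, elementary derivation (nothing guessed in advance) together with an independent consistency check of the paper's formulas against Bailey (1964); what it costs is the external input that the $\alpha=1$ ADM series \emph{is} Bailey's solution. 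That step deserves one more sentence: either invoke the paper's standing regularity/uniqueness assumption, so that the solution of (\ref{bdpequ}) given by Theorem \ref{thmpmf} at $\alpha=1$ coincides with Bailey's formula, or --- cleaner, since it avoids any discussion of the radius of convergence of the ADM series --- observe that the coefficients generated by (\ref{adm2}) and $k!$ times the Taylor coefficients of Bailey's (analytic) solution satisfy the identical $\alpha$-free three-term recursion with identical initial data $c_{1,0}=1$ and $c_{n,0}=0$ for $n\neq1$, hence coincide by induction on $k$. Either patch is routine, so the proposal is complete.
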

\begin{proof}
	If $\lambda>\mu$ then functions $\psi_{n,\,k}$'s as defined in (\ref{psidef}) take the following form:
	\begin{equation}\label{lnemu}
		\psi_{n,\,k}(\lambda,\mu)=\begin{cases}
			0,\ n=k=0\ \text{or} \ n-k\ge2,\vspace{0.2cm}\\
			(\lambda-\mu)^k\sum_{l=0}^{\infty}\big(\frac{\mu}{\lambda}\big)^{l+1}((l+1)^k-l^k),\ n=0,\ k\ge1,\vspace{0,2cm}\\
			(-1)^{n-1}\frac{(\lambda-\mu)^{k+2}}{\lambda^2}\sum_{l=0}^{\infty}\big(\frac{\mu}{\lambda}\big)^l\binom{n+l}{l}\sum_{r=0}^{n-1}(-1)^{r}\binom{n-1}{r}(r+l+1)^k,\ n\ge1,\, n-k\leq1.
		\end{cases}
	\end{equation}
	Now, the proof follows along the similar lines to that of  Theorem \ref{cmpts}. Hence, it is omitted.
\end{proof}
\begin{remark}\label{rem3.1}
	For $\lambda>\mu$, summing (\ref{thml1}) over the range $k$, we get the extinction probability of linear FBDP as follows:
	\begin{align*}
		\tilde{p}^{\alpha}(0,t)&=\sum_{k=1}^{\infty}(-1)^{k}(\lambda-\mu)^{k}\sum_{l=0}^{\infty}\bigg(\frac{\mu}{\lambda}\bigg)^{l+1} (l^k-(l+1)^k)\frac{t^{\alpha k}}{\Gamma(k\alpha+1)}\\
		&=\sum_{l=0}^{\infty}\bigg(\frac{\mu}{\lambda}\bigg)^{l+1}\sum_{k=0}^{\infty}\Bigg(\frac{(-t^\alpha l (\lambda-\mu))^k}{\Gamma(k\alpha+1)}-\frac{(-t^\alpha (l+1)(\lambda-\mu))^k}{\Gamma(k\alpha+1)}\Bigg)\\
		&=\sum_{l=0}^{\infty}\bigg(\frac{\mu}{\lambda}\bigg)^{l+1}(E_{\alpha,1}(-t^\alpha l (\lambda-\mu))-E_{\alpha,1}(-t^\alpha (l+1)(\lambda-\mu)))\\
		&=\sum_{l=0}^{\infty}\bigg(\frac{\mu}{\lambda}\bigg)^{l+1}E_{\alpha,1}(-t^\alpha l (\lambda-\mu))-\sum_{l=1}^{\infty}\bigg(\frac{\mu}{\lambda}\bigg)^{l}E_{\alpha,1}(-t^\alpha l (\lambda-\mu))\\
		&=\frac{\mu}{\lambda}-\frac{(\lambda-\mu)}{\lambda}\sum_{l=1}^{\infty}\bigg(\frac{\mu}{\lambda}\bigg)^{l}E_{\alpha,1}(-t^\alpha l (\lambda-\mu)),
	\end{align*}
	and for $n\ge1$, we get
	\begin{align*}
		\tilde{p}^{\alpha}(n,t)&=\sum_{k=n-1}^{\infty}(-1)^{k}\frac{(\lambda-\mu)^{k+2}}{\lambda^2}\sum_{l=0}^{\infty}\bigg(\frac{\mu}{\lambda}\bigg)^l\binom{n+l}{l}\sum_{r=0}^{n-1}(-1)^r\binom{n-1}{r}(r+l+1)^k\frac{t^{\alpha k}}{\Gamma(k\alpha+1)}\\
		&=\frac{(\lambda-\mu)^{2}}{\lambda^2}\sum_{l=0}^{\infty}\bigg(\frac{\mu}{\lambda}\bigg)^l\binom{n+l}{l}\sum_{r=0}^{n-1}(-1)^r\binom{n-1}{r}\sum_{k=0}^{\infty}\frac{(-t^\alpha(r+l+1)(\lambda-\mu))^k}{\Gamma(k\alpha+1)}\\
		&=\frac{(\lambda-\mu)^{2}}{\lambda^2}\sum_{l=0}^{\infty}\bigg(\frac{\mu}{\lambda}\bigg)^l\binom{n+l}{l}\sum_{r=0}^{n-1}(-1)^r\binom{n-1}{r}E_{\alpha,1}(-t^\alpha (r+l+1)(\lambda-\mu)),
	\end{align*}
	where $E_{\alpha,\,1}(\cdot)$ is the one parameter Mittag-Leffler function defined as follows (see Kilbas \textit{et al.} (2006)):
	\begin{equation}\label{mittag}
		E_{\alpha,\,1}(x)=\sum_{k=0}^{\infty}\frac{x^k}{\Gamma{(k\alpha+1)}},\ x\in\mathbb{R},\,\alpha>0.
	\end{equation}
	 These state probabilities coincide with Eq. (2.13)  and Eq. (3.1) of Orsingher and Polito (2011), respectively
\end{remark}
\begin{corollary}
	If the rate of birth is lower than the rate of death in linear FBDP, that is, $\lambda<\mu$ then the series components of its state probabilities are given by
	\begin{equation*}
		\tilde{p}^\alpha_k(n,t)=\begin{cases}
			0,\,n=k=0\ \mathrm{or} \ n-k\ge2,\vspace{0.2cm}\\
			(-1)^k(\mu-\lambda)^k\sum_{l=0}^{\infty}\big(\frac{\lambda}{\mu}\big)^{l}(l^k-(l+1)^k)\frac{t^{k\alpha}}{\Gamma(k\alpha+1)},\ n=0,\ k\ge1,\vspace{0.2cm}\\
			(-1)^k\big(\frac{\lambda}{\mu}\big)^{n-1}\frac{(\mu-\lambda)^{k+2}}{\mu^2}\sum_{l=0}^{\infty}\big(\frac{\lambda}{\mu}\big)^l\binom{n+l}{l}\sum_{r=0}^{n-1}(-1)^r\binom{n-1}{r}\\
			\hspace{5cm}\cdot(r+l+1)^k\frac{t^{k\alpha}}{\Gamma(k\alpha+1)},\ n\ge1,\,n-k\leq1.
		\end{cases}
	\end{equation*}
\end{corollary}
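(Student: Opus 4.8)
The plan is to reduce everything to the polynomials $\psi_{n,\,k}$ and to exploit a duality between the subcases $\lambda>\mu$ and $\lambda<\mu$. By (\ref{secomps}) it suffices to determine the closed form of $\psi_{n,\,k}(\lambda,\mu)$ when $\lambda<\mu$, since the series component $\tilde{p}^\alpha_k(n,t)$ is obtained from $\psi_{n,\,k}$ by attaching the sign $(-1)^{k-n+1}$ (respectively $(-1)^{k+1}$ for $n=0$) together with the factor $t^{k\alpha}/\Gamma(k\alpha+1)$. The expression (\ref{lnemu}) furnished in Corollary \ref{thm2} represents this same polynomial $\psi_{n,\,k}$, but its geometric series in $\mu/\lambda$ converges only for $\lambda>\mu$; for $\lambda<\mu$ one needs a companion expansion in powers of $\lambda/\mu$. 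I would obtain it from the identity
\[
\psi_{n,\,k}(\lambda,\mu)=\left(\frac{\lambda}{\mu}\right)^{n-1}\psi_{n,\,k}(\mu,\lambda),\qquad n\ge0,\ k\ge0,
\]
which interchanges the roles of the birth and death rates at the cost of the weight $(\lambda/\mu)^{n-1}$.

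First I would prove this identity by induction on $k$, specialising the recursion (\ref{psidef}) to $\lambda_n=n\lambda$, $\mu_n=n\mu$. The base cases $\psi_{0,0}=0$, $\psi_{1,0}=1$, $\psi_{0,1}=\mu$ are immediate. Writing $\psi^{*}_{n,\,k}:=\psi_{n,\,k}(\mu,\lambda)$, which obeys (\ref{psidef}) with $\lambda$ and $\mu$ transposed, I would substitute the inductive hypothesis $\psi_{n,\,k}=(\lambda/\mu)^{n-1}\psi^{*}_{n,\,k}$ into each branch of the recursion for $\psi_{n,\,k+1}$. In the interior branch ($n\ge2$) the three terms carry the coefficients $n(\lambda+\mu)$, $(n-1)\lambda$ and $(n+1)\mu$; after inserting $\psi^{*}_{n-1,\,k}=(\mu/\lambda)^{n-2}\psi_{n-1,\,k}$ and $\psi^{*}_{n+1,\,k}=(\mu/\lambda)^{n}\psi_{n+1,\,k}$ the powers of $\lambda/\mu$ telescope so that $(n-1)\mu\mapsto(n-1)\lambda$ and $(n+1)\lambda\mapsto(n+1)\mu$, exactly reproducing the transposed recursion; the boundary branches $n=0,1$ and the terminal value $\psi_{n,\,k}=\lambda_1\cdots\lambda_{n-1}=(n-1)!\,\lambda^{n-1}$ at $n-k=2$ are verified directly and are consistent with the weight. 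This weight-bookkeeping across the asymmetric boundary at $n=0,1$ and the terminal layer is the main point to get right; everything else is formal.

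Finally I would apply the identity to (\ref{lnemu}) with $(\lambda,\mu)$ transposed: for $\lambda<\mu$ the swapped series runs in powers of $\lambda/\mu<1$ and converges, and multiplying by $(\lambda/\mu)^{n-1}$ gives a closed form for $\psi_{n,\,k}(\lambda,\mu)$ valid in this regime. Substituting into (\ref{secomps}) and collecting the signs --- using $l^k-(l+1)^k=-\big((l+1)^k-l^k\big)$ in the case $n=0$ and $(-1)^{k-n+1}(-1)^{n-1}=(-1)^k$ for $n\ge1$ --- yields precisely the stated expressions, while the terminal layer $n-k=1$ reproduces $\prod_{j=1}^{n-1}\lambda_j\,t^{k\alpha}/\Gamma(k\alpha+1)$ as in (\ref{secomps}). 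As an alternative I could bypass the duality and verify the claimed formula for $\tilde{p}^\alpha_k(n,t)$ directly by induction on $k$ along the lines of Theorem \ref{cmpts}, but that route forces one to establish the binomial double-sum identity by hand, which is exactly the computation the duality lets me avoid.
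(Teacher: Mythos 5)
Your proposal is correct, but it takes a genuinely different route from the paper's. The paper disposes of the $\lambda<\mu$ case exactly as it did the $\lambda>\mu$ case (Corollary \ref{thm2}): it writes down the closed form of $\psi_{n,\,k}(\lambda,\mu)$ for $\lambda<\mu$ and asserts that the verification proceeds ``along similar lines'' to Theorem \ref{cmpts}, i.e.\ by a direct induction showing that the binomial double-sum expression satisfies the recursion (\ref{psidef}) --- a combinatorial check the paper in fact omits. You instead isolate the swap identity $\psi_{n,\,k}(\lambda,\mu)=(\lambda/\mu)^{n-1}\psi_{n,\,k}(\mu,\lambda)$, prove it by an induction that is pure weight bookkeeping (for linear rates the interior terms transform as $(n-1)\lambda\cdot(\mu/\lambda)=(n-1)\mu$ and $(n+1)\mu\cdot(\lambda/\mu)=(n+1)\lambda$, with the boundary branches $n=0,1$ and the terminal layer $\lambda_1\cdots\lambda_{n-1}=(n-1)!\,\lambda^{n-1}$ consistent with the weight, the $n=0$ weight being $\mu/\lambda$), and then import the already-proven formula (\ref{lnemu}) with $\lambda$ and $\mu$ interchanged, where the series in $\lambda/\mu$ converges. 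Your sign bookkeeping into (\ref{secomps}) is also right, and the outcome matches the paper: indeed the $\psi_{n,\,k}$ formulas displayed in the paper's own proof are visibly $(\lambda/\mu)^{n-1}$ times the $\lambda>\mu$ formulas with arguments transposed (with weight $\mu/\lambda$ at $n=0$), so your duality is precisely the relation the paper uses implicitly but never states. What your route buys: no binomial identity has to be re-verified against the three-term recursion, the $\lambda<\mu$ case becomes a one-line consequence of Corollary \ref{thm2}, and the lemma exposes a structural reversal symmetry of the linear (F)BDP, $\tilde{p}^\alpha(n,t;\lambda,\mu)=(\lambda/\mu)^{n-1}\tilde{p}^\alpha(n,t;\mu,\lambda)$ for $n\ge1$, that the paper leaves unremarked. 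What the paper's route buys: each parameter regime is handled self-containedly, without presupposing the other.
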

\begin{proof}
	For $\lambda<\mu$, the functions $\psi_{n,\,k}$'s can be rewritten as follows:
	\begin{equation*}
		\psi_{n,\,k}(\lambda,\mu)=\begin{cases}
			0,\ n=k=0\ \text{or}\ n-k\ge2,\vspace{0.2cm}\\
			(\mu-\lambda)^k\sum_{l=0}^{\infty}\big(\frac{\lambda}{\mu}\big)^{l}((l+1)^k-l^k),\ n=0,\ k\ge1,\vspace{0.2cm}\\
			(-1)^{n-1}\big(\frac{\lambda}{\mu}\big)^{n-1}\frac{(\mu-\lambda)^{k+2}}{\mu^2}\sum_{l=0}^{\infty}\big(\frac{\lambda}{\mu}\big)^l\binom{n+l}{l}\sum_{r=0}^{n-1}(-1)^r\binom{n-1}{r}\\
			\hspace{6cm}\cdot(r+l+1)^k,\ n\ge1,\ n-k\leq1.
		\end{cases}
	\end{equation*}
	Hence, the proof follows along the similar lines to the case of $\lambda>\mu$.
\end{proof}
\begin{remark}
	For $\lambda<\mu$, the extinction probability of linear FBDP is given by 
	\begin{align}
		\tilde{p}^\alpha(0,t)&=\sum_{k=0}^{\infty}(-1)^{k}(\mu-\lambda)^{k}\sum_{l=0}^{\infty}\bigg(\frac{\lambda}{\mu}\bigg)^l (l^k-(l+1)^k)\frac{t^{\alpha k}}{\Gamma(k\alpha+1)}\nonumber\\
		&=\sum_{l=0}^{\infty}\bigg(\frac{\lambda}{\mu}\bigg)^l\sum_{k=0}^{\infty}\Bigg(\frac{(-t^\alpha l (\mu-\lambda))^k}{\Gamma(k\alpha+1)}-\frac{(-t^\alpha (l+1)(\mu-\lambda))^k}{\Gamma(k\alpha+1)}\Bigg)\nonumber\\
		&=\sum_{l=0}^{\infty}\bigg(\frac{\lambda}{\mu}\bigg)^l(E_{\gamma,1}(-t^\alpha l (\mu-\lambda))-E_{\alpha,1}(-t^\alpha (l+1) (\mu-\lambda)))\\
		&=\sum_{l=0}^{\infty}\bigg(\frac{\lambda}{\mu}\bigg)^lE_{\alpha,1}(-t^\alpha l (\mu-\lambda))-\frac{\mu}{\lambda}\sum_{l=1}^{\infty}\bigg(\frac{\lambda}{\mu}\bigg)^lE_{\alpha,1}(-t^\alpha l (\mu-\lambda))\nonumber\\
		&=1-\frac{(\mu-\lambda)}{\lambda}\sum_{l=1}^{\infty}\bigg(\frac{\lambda}{\mu}\bigg)^lE_{\alpha,1}(-t^\alpha l (\mu-\lambda)).\label{lsp0}
	\end{align}
	Moreover, for $n\ge1$, we have
	\begin{align*}
		\tilde{p}^{\alpha}(n,t)&=\sum_{k=0}^{\infty}(-1)^k\bigg(\frac{\lambda}{\mu}\bigg)^{n-1}\frac{(\mu-\lambda)^{k+2}}{\mu^2}\sum_{l=0}^{\infty}\bigg(\frac{\lambda}{\mu}\bigg)^l\binom{n+l}{l}\sum_{r=0}^{n-1}(-1)^r\binom{n-1}{r}(r+l+1)^k\frac{t^{\alpha k}}{\Gamma(k\alpha+1)}\\
		&=\bigg(\frac{\lambda}{\mu}\bigg)^{n-1}\frac{(\mu-\lambda)^{2}}{\mu^2}\sum_{l=0}^{\infty}\bigg(\frac{\lambda}{\mu}\bigg)^l\binom{n+l}{l}\sum_{r=0}^{n-1}(-1)^r\binom{n-1}{r}\sum_{k=0}^{\infty}\frac{(-t^{\alpha}(r+l+1)(\mu-\lambda))^k}{\Gamma(k\alpha+1)}\\
		&=\bigg(\frac{\lambda}{\mu}\bigg)^{n-1}\frac{(\mu-\lambda)^{2}}{\mu^2}\sum_{l=0}^{\infty}\bigg(\frac{\lambda}{\mu}\bigg)^l\binom{n+l}{l}\sum_{r=0}^{n-1}(-1)^r\binom{n-1}{r}E_{\alpha,1}(-t^{\alpha}(r+l+1)(\mu-\lambda)),
	\end{align*}
	which again coincide with Eq. (3.16) of Orsingher and Polito (2011).
\end{remark}
\begin{corollary}
	If the rate of birth and death are same, that is, $\lambda=\mu$ then the series components of the state probabilities of linear FBDP are given by
	\begin{equation*}
		\tilde{p}_k^{\alpha}(n,t)=\begin{cases}
			0,\,n=k=0\ \mathrm{or} \ n-k\ge2,\vspace{0.1cm}\\
			-\frac{k!}{\Gamma(k\alpha+1)}(-\lambda t^{\alpha})^k,\ n=0,\ k\ge1,\vspace{0.1cm}\\
			(-1)^{n+1}\binom{k+1}{n}\frac{ k!}{\Gamma(k\alpha+1)}(-\lambda t^{\alpha})^k,\ n\ge1,\ n-k<2.
		\end{cases}
	\end{equation*}
\end{corollary}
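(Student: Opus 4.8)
The plan is to reuse the two-stage strategy of the preceding two corollaries: first determine the polynomials $\psi_{n,\,k}$ of (\ref{psidef}) explicitly when $\lambda=\mu$, and then read off the series components from the general formula (\ref{secomps}) of Theorem \ref{cmpts}. Setting $\lambda_n=n\lambda$ and $\mu_n=n\mu=n\lambda$ gives $\Lambda_n=2n\lambda$, $\lambda_{n-1}=(n-1)\lambda$ and $\mu_{n+1}=(n+1)\lambda$, so every branch of the recursion (\ref{psidef}) becomes a relation in the single parameter $\lambda$. I would then conjecture the closed form
\begin{equation*}
	\psi_{n,\,k}(\lambda,\lambda)=\binom{k+1}{n}k!\,\lambda^k,\ 0\le n\le k+1,\ k\ge1,
\end{equation*}
where $\binom{k+1}{n}=0$ for $n\ge k+2$ automatically reproduces $\psi_{n,\,k}=0$ in that regime, and prove it by induction on $k$. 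The base layers $\psi_{1,0}=1$, $\psi_{0,1}=\mu_1=\lambda$ and the boundary value $\psi_{n,\,n-1}=\lambda_1\cdots\lambda_{n-1}=(n-1)!\,\lambda^{n-1}=\binom{n}{n}(n-1)!\,\lambda^{n-1}$ are immediate, while the seed $\psi_{0,0}=0$ is never invoked by the recursion.

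For the inductive step I would substitute the conjectured form into each branch of (\ref{psidef}). The branches $n=0$ and $n=1$ collapse to one-line computations, while the decisive branch is $n\ge2$, $k>n-2$, namely
\begin{equation*}
	\psi_{n,\,k+1}=2n\lambda\,\psi_{n,\,k}+(n-1)\lambda\,\psi_{n-1,\,k}+(n+1)\lambda\,\psi_{n+1,\,k}.
\end{equation*}
Matching this against the target $\binom{k+2}{n}(k+1)!\,\lambda^{k+1}$ reduces the whole step to the binomial identity
\begin{equation*}
	2n\binom{k+1}{n}+(n-1)\binom{k+1}{n-1}+(n+1)\binom{k+1}{n+1}=(k+1)\binom{k+2}{n},
\end{equation*}
which I would establish by first using the absorption rule $(n+1)\binom{k+1}{n+1}=(k+1-n)\binom{k+1}{n}$ together with Pascal's rule $\binom{k+2}{n}=\binom{k+1}{n}+\binom{k+1}{n-1}$, whereupon it collapses to the elementary relation $n\binom{k+1}{n}=(k+2-n)\binom{k+1}{n-1}$. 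Verifying this identity and bookkeeping the several boundary regimes of (\ref{psidef}) is the main obstacle; once the identity is in hand, the induction closes cleanly.

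Finally I would insert $\psi_{n,\,k}(\lambda,\lambda)=\binom{k+1}{n}k!\,\lambda^k$ into (\ref{secomps}). For $n=0$, $k\ge1$ the sign $(-1)^{k+1}$ absorbs into $\lambda^k t^{k\alpha}=(-1)^k(-\lambda t^\alpha)^k$ to give $-k!(-\lambda t^\alpha)^k/\Gamma(k\alpha+1)$. For $n\ge1$ with $n-k<1$ the factor $(-1)^{k-n+1}$ equals $(-1)^{n+1}(-1)^k$, yielding $(-1)^{n+1}\binom{k+1}{n}k!(-\lambda t^\alpha)^k/\Gamma(k\alpha+1)$; and on the boundary $n-k=1$ (so $k=n-1$) the product-of-rates branch of (\ref{secomps}) gives $(n-1)!\,\lambda^{n-1}t^{(n-1)\alpha}/\Gamma((n-1)\alpha+1)$, which equals the very same expression because $\binom{n}{n}=1$ and $(-1)^{n+1}(-1)^{n-1}=1$. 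This coincidence is exactly why the two regimes $n-k<1$ and $n-k=1$ fuse into the single case $n-k<2$ of the statement, while $n=k=0$ and $n-k>1$ return $0$ directly from (\ref{secomps}), completing the proof.
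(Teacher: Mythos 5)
Your proposal is correct and follows essentially the same route as the paper: specialize the recursion (\ref{psidef}) at $\lambda=\mu$ to the closed form $\psi_{n,\,k}=\binom{k+1}{n}k!\,\lambda^k$ and then read off the components from (\ref{secomps}), with the sign bookkeeping and the fusion of the $n-k=1$ boundary into the $n-k<2$ case exactly as the paper intends. The only difference is one of completeness: the paper merely asserts the reduced form of $\psi_{n,\,k}$ and says the proof follows, whereas you supply the inductive verification and the underlying binomial identity, both of which check out.
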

\begin{proof}
	If $\lambda=\mu>0$ then the polynomials defined in (\ref{psidef}) reduce to
	\begin{equation*}
		\psi_{n,\,k}(\lambda)=\begin{cases}
			0,\ n=k=0\ \text{or} \ n-k\ge2,\vspace{0.2cm}\\
			{k!\lambda^k},\ n=0,\ k\ge1,\vspace{0.2cm}\\
			\binom{k+1}{n}{ k!\lambda^k},\ n\ge1,\ n-k<2.
		\end{cases}
	\end{equation*}
	Hence, the proof follows.
\end{proof}
\begin{remark}\label{rem3.3}
	For $\lambda=\mu$, the extinction probability of linear FBDP is given by
	\begin{align*}
		\tilde{p}^{\alpha}(0,t)&=\sum_{k=1}^{\infty}\frac{-k!}{\Gamma(k\alpha+1)}(-\lambda t^{\alpha})^k\\
		&=1-\sum_{k=0}^{\infty}\frac{(-\lambda t^\alpha)^k }{\Gamma(k\alpha+1)}\int_{0}^{\infty}e^{-x}x^k\,\mathrm{d}x\\
		&=1-\int_{0}^{\infty}e^{-x}\sum_{k=0}^{\infty}\frac{(-\lambda t^\alpha)^k x^k}{\Gamma(k\alpha+1)}\,\mathrm{d}x\\
		&=1-\int_{0}^{\infty}e^{-x}E_{\alpha,1}(-\lambda t^{\alpha} x)\,\mathrm{d}x,
	\end{align*}
	where $E_{\alpha,1}(\cdot)$ is the one parameter Mittag-Leffler function defined in (\ref{mittag}).	
	 Also, for $n\ge1$, we have
	\begin{align*}
		\tilde{p}^{\alpha}(n,t)&=\sum_{k=0}^{\infty}(-1)^{n+1}\binom{k+1}{n}\frac{ k!}{\Gamma(k\alpha+1)}(-\lambda t^{\alpha})^k\\
		&=\frac{(-\lambda)^{n-1}}{n!}\sum_{k=0}^{\infty}\frac{k! (-t^\alpha)^k}{\Gamma(k\alpha+1)}\frac{(k+1)!}{(k+1-n)!}{\lambda}^{k-n+1}\\
		&=\frac{(-\lambda)^{n-1}}{n!}\sum_{k=0}^{\infty}\frac{k! (-t^\alpha)^k}{\Gamma(k\alpha+1)}\frac{\mathrm{d}^n}{\mathrm{d}\lambda^n}\lambda^{k+1}\\
		&=\frac{(-\lambda)^{n-1}}{n!}\frac{\mathrm{d}^n}{\mathrm{d}\lambda^n}\bigg(\lambda\sum_{k=0}^{\infty}\frac{k! }{\Gamma(k\alpha+1)}(-\lambda t^\alpha )^k \bigg)\\
		&=\frac{(-\lambda)^{n-1}}{n!}\frac{\mathrm{d}^n}{\mathrm{d}\lambda^n}(\lambda(1-\tilde{p}^{\alpha}(0,t))),
	\end{align*}
	which agree with Eq. (2.26)  and Eq. (3.20) of Orsingher and Polito (2011), respectively.
\end{remark}

\begin{remark}
	Let $\tilde{\mathscr{T}}^\alpha$ denote the time of extinction for linear FBDP, that is, $\tilde{\mathscr{T}}^\alpha=\inf\{t>0:\tilde{{N}}^\alpha(t)=0\}$, the first passage time  of linear FBDP to state $n=0$. In linear FBDP, there is no possibility of immigration. So, its distribution function is given by $\mathrm{Pr}\{\tilde{\mathscr{T}}^\alpha\leq t\}=\mathrm{Pr}\{\tilde{{N}}^\alpha(t)=0\}$, $t\ge0$. In view of Remark \ref{rem3.1}-Remark \ref{rem3.3}, we get
	\begin{equation*}
		\mathrm{Pr}\{\tilde{\mathscr{T}}^\alpha<\infty\}=\lim_{t\to\infty}\mathrm{Pr}\{\tilde{\mathscr{T}}^\alpha\leq t\}=\begin{cases}
			\frac{\mu}{\lambda},\ \lambda>\mu,\\
			1,\ \lambda\leq\mu.
		\end{cases}
	\end{equation*}
	Thus, in linear FBDP, population gets extinct in finite time with probability one whenever $\lambda\leq\mu$. For $\lambda>\mu$, there is no certainty for the extinction of population in finite time.
\end{remark}
\begin{remark}
	On substituting $\lambda_n=n\lambda$ and $\mu_n=n\mu$ in (\ref{pgfeq}), we get the governing equation for the pgf $\tilde{G}^\alpha(u,t)=\mathbb{E}u^{\tilde{{N}}^\alpha(t)}$, $|u|\leq1$ of linear FBDP as follows:
	\begin{equation*}
		\frac{\partial^\alpha}{\partial t^\alpha}\tilde{G}^\alpha(u,t)=(u-1)(\lambda u-\mu)\frac{\partial}{\partial u}\tilde{G}^\alpha(u,t),\ \ \tilde{G}^\alpha(u,0)=u.
	\end{equation*}
	Its mean and variance are given by 
	\begin{equation}\label{meanfbdp}
		\mathbb{E}\tilde{{N}}^\alpha(t)=E_{\alpha,1}((\lambda-\mu)t^\alpha),\ t\ge0
	\end{equation} and
	\begin{equation*}
		\mathbb{V}\mathrm{ar}\tilde{N}^\alpha(t)=\frac{2\lambda}{\lambda-\mu}E_{\alpha,1}(2(\lambda-\mu)t^\alpha)-\frac{\lambda+\mu}{\lambda-\mu}E_{\alpha,1}((\lambda-\mu)t^\alpha)-(E_{\alpha,1}((\lambda-\mu)t^\alpha))^2,\ t\ge0,
	\end{equation*} 
	which agree with Eq. (4.3) and Eq. (4.11) of Orsingher and Polito (2011), respectively.
\end{remark}
\subsection{Cumulative births in linear FBDP}
Here, we discuss some results related to the cumulative births in linear FBDP.
 Let $\tilde{B}(t)$ denote the number of births in linear BDP $\{\tilde{N}(t)\}_{t\ge0}$ up to time $t\ge0$. Then, the joint distribution $\tilde{p}(n,b,t)=\mathrm{Pr}\{\tilde{{N}}(t)=n,\tilde{B}(t)=b\}$, $n\ge0$, $b\ge1$ of bi-variate process $\{(\tilde{{N}}(t),\tilde{B}(t))\}_{t\ge0}$ solves the following system of differential equations (see Vishwakarma and Kataria (2024), Eq. (21) for the case $k_1=k_2=1$):
\begin{equation*}
	\mathcal{D}_t\tilde{p}(b,n,t)=-n(\lambda+\mu)\tilde{p}(b,n,t)+(n-1)\lambda \tilde{p}(b-1,n-1,t)+(n+1)\mu \tilde{p}(b,n+1,t),\ b\ge1,\,n\ge0,
\end{equation*}
with initial condition $p(1,1,0)=1$. 

Let us consider a bi-variate process $\{(\tilde{{N}}^\alpha(t),\tilde{B}^\alpha(t))\}_{t\ge0}$, $0<\alpha\leq1$ whose joint distribution $\tilde{p}^\alpha(n,b,t)=\mathrm{Pr}\{\tilde{{N}}^\alpha(t)=n,\tilde{B}^\alpha(t)=b\}$, $n\ge0$, $b\ge1$ satisfies the following system of fractional differential equations:
 \begin{equation*}
	\mathcal{D}_t^\alpha\tilde{p}^\alpha(b,n,t)=-n(\lambda+\mu)\tilde{p}^\alpha(b,n,t)+(n-1)\lambda \tilde{p}^\alpha(b-1,n-1,t)+(n+1)\mu \tilde{p}^\alpha(b,n+1,t),\ b\ge1,\,n\ge0,
\end{equation*}
with $\tilde{p}^\alpha(1,1,0)=1$, where $\mathcal{D}_t^\alpha$ is the Caputo fractional derivative as defined in (\ref{caputoder}).  
 
For $\alpha=1$, $\tilde{B}^\alpha(t)$ reduces to $\tilde{B}(t)$. The process $\{\tilde{B}^\alpha(t)\}_{t\ge0}$, $0<\alpha\leq1$ plays an important in several applications of the linear FBDP. For example, in an epidemic model under accelerating conditions, if $\tilde{N}^\alpha(t)$ denotes the total number of cases affected by a rapidly spreading disease at time $t\ge0$ then $\tilde{B}^\alpha(t)$ denotes the total number of cases recorded by time $t$. If  the process $\tilde{N}^\alpha(t)$ is such that the epidemic ends in some finite time then $\tilde{B}^\alpha(t)$ is a measure of epidemic severity.

The joint pgf $\tilde{G}^\alpha(u,v,t)=\mathbb{E}u^{\tilde{{N}}^\alpha(t)}v^{\tilde{B}^\alpha(t)}$, $|u|\leq1$, $|v|\leq1$ solves 
\begin{equation}\label{cumbpgf}
\mathcal{D}_t^\alpha\tilde{G}^\alpha(u,v,t)=(\lambda u(uv-1)+\mu(1-u))\frac{\partial}{\partial u}\tilde{G}^\alpha(u,v,t),
\end{equation}
with $\tilde{G}^\alpha(u,v,0)=uv$.

On taking $u=e^{\theta_u}$ and $v=e^{\theta_v}$ in (\ref{cumbpgf}) for appropriate choices of $\theta_u$ and $\theta_v$, we get the following fractional differential equation that governs the joint cumulant generating function $\tilde{K}^\alpha(\theta_u,\theta_v,t)=\ln\tilde{G}^\alpha\left(e^{\theta_u},e^{\theta_v},t\right)$ of $(\tilde{N}^\alpha(t),\tilde{B}^\alpha(t))$:
\begin{equation}\label{cgf}
\mathcal{D}_t^\alpha\tilde{K}^\alpha(\theta_u,\theta_v,t)=\left(\lambda\big(e^{\theta_u+\theta_v}-1\big)+\mu\big(e^{-\theta_u}-1\big)\right)\mathcal{D}_{\theta_u}\tilde{K}^\alpha(\theta_u,\theta_v,t),
\end{equation}
with initial condition $\tilde{K}^\alpha(\theta_u,\theta_v,0)=\theta_u+\theta_v$.

Now, on substituting the following expansion of cumulant generating function (see Kendall (1948), Eq. (35)):
\begin{equation*}
	\tilde{K}^\alpha(\theta_u,\theta_v,t)=\theta_u\mathbb{E}\tilde{{N}}^\alpha(t)+\theta_v\mathbb{E}\tilde{B}^\alpha(t)+\frac{\theta_u^2}{2}\mathbb{V}\mathrm{ar}\tilde{{N}}^\alpha(t)+\frac{\theta_v^2}{2}\mathbb{V}\mathrm{ar}\tilde{B}^\alpha(t)+\theta_u\theta_v\mathbb{C}\mathrm{ov}(\tilde{N}^\alpha(t),\tilde{B}^\alpha(t))+\dots
\end{equation*}
in (\ref{cgf}), we have
\begin{align}\label{cgfexp}
\mathcal{D}_t^\alpha\bigg(\theta_u\mathbb{E}&\tilde{{N}}^\alpha(t)+\theta_v\mathbb{E}\tilde{B}^\alpha(t)+\frac{\theta_u^2}{2}\mathbb{V}\mathrm{ar}\tilde{{N}}^\alpha(t)+\frac{\theta_v^2}{2}\mathbb{V}\mathrm{ar}\tilde{B}^\alpha(t)+\theta_u\theta_v\mathbb{C}\mathrm{ov}(\tilde{N}^\alpha(t),\tilde{B}^\alpha(t))+\dots \bigg)\nonumber\\
&=\sum_{k=0}^{\infty}\left(\frac{\lambda(\theta_u+\theta_v)^k+\mu(-\theta_u)^k}{k!}\right)\big(\mathbb{E}\tilde{{N}}^\alpha(t)+{\theta_u}\mathbb{V}\mathrm{ar}\tilde{{N}}^\alpha(t)+\theta_v\mathbb{C}\mathrm{ov}(\tilde{N}^\alpha(t),\tilde{B}^\alpha(t))+\dots\big).
\end{align}

On comparing the coefficients on both sides of (\ref{cgfexp}), we get the following differential equations:
{\small\begin{align}
	\mathcal{D}_t^\alpha\mathbb{V}\mathrm{ar}\tilde{{N}}^\alpha(t)&=2(\lambda-\mu)\mathbb{V}\mathrm{ar}\tilde{{N}}^\alpha(t)+(\lambda+\mu)\mathbb{E}\tilde{{N}}^\alpha(t),\ \ \mathbb{V}\mathrm{ar}\tilde{{N}}^\alpha(0)=0,\label{0}\\
	\mathcal{D}_t^\alpha\mathbb{E}\tilde{B}^\alpha(t)&=\lambda\mathbb{E}\tilde{N}^\alpha(t),\ \ \mathbb{E}\tilde{B}^\alpha(0)=1,\label{1}\\
	\mathcal{D}_t^\alpha\mathbb{C}\mathrm{ov}(\tilde{N}^\alpha(t),\tilde{B}^\alpha(t))&=\lambda(\mathbb{E}\tilde{N}^\alpha(t)+\mathbb{V}\mathrm{ar}\tilde{N}^\alpha(t))+(\lambda-\mu)\mathbb{C}\mathrm{ov}(\tilde{N}^\alpha(t),\tilde{B}^\alpha(t)),\ \ \mathbb{C}\mathrm{ov}(\tilde{N}^\alpha(t),\tilde{B}^\alpha(0))=0\label{2}
\end{align}}
and
\begin{equation}\label{3}
	\mathcal{D}_t^\alpha\mathbb{V}\mathrm{ar}\tilde{B}^\alpha(t)=\lambda\mathbb{E}\tilde{N}^\alpha(t)+2\lambda\mathbb{C}\mathrm{ov}(\tilde{N}^\alpha(t),\tilde{B}^\alpha(t)),\ \ \mathbb{V}\mathrm{ar}\tilde{B}^\alpha(0)=0.
\end{equation}
 By substituting (\ref{meanfbdp}) in (\ref{0}), we get
	\begin{equation}\label{pf0}
		\mathcal{D}_t^\alpha\mathbb{V}\mathrm{ar}\tilde{{N}}^\alpha(t)=2(\lambda-\mu)\mathbb{V}\mathrm{ar}\tilde{{N}}^\alpha(t)+(\lambda+\mu)E_{\alpha,1}((\lambda-\mu)t^\alpha),\ \ \mathbb{V}\mathrm{ar}\tilde{{N}}^\alpha(0)=0.
	\end{equation}
	On taking the Laplace transform on both sides of (\ref{pf0}) and using (\ref{frderlap}), we get
	\begin{equation*}
		w^\alpha\int_{0}^{\infty}e^{-wt}\mathbb{V}\mathrm{ar}\tilde{{N}}^\alpha(t)\,\mathrm{d}t=2(\lambda-\mu)\int_{0}^{\infty}e^{-wt}\mathbb{V}\mathrm{ar}\tilde{{N}}^\alpha(t)\,\mathrm{d}t+\frac{(\lambda+\mu)w^{\alpha-1}}{w^\alpha-(\lambda-\mu)},
	\end{equation*}
	where we have used the Laplace transform of Mittag-Leffler function given as follows (see Kilbas \textit{et al.} (2006)):
	\begin{equation}\label{mittaglap}
		\int_{0}^{\infty}e^{-wt}E_{\alpha,1}(ct^\alpha)\,\mathrm{d}t=\frac{w^{\alpha-1}}{w^\alpha-c},\ c\in\mathbb{R},\ w>0.
	\end{equation}
	So,
	\begin{equation}\label{pvarlap}
		\int_{0}^{\infty}e^{-wt}\mathbb{V}\mathrm{ar}\tilde{{N}}^\alpha(t)\,\mathrm{d}t=\frac{\lambda+\mu}{\lambda-\mu}\left(\frac{w^{\alpha-1}}{w^\alpha-2(\lambda-\mu)}-\frac{w^{\alpha-1}}{w^\alpha-(\lambda-\mu)}\right),\ w>0,
	\end{equation}
	which on taking the inverse Laplace transform yields 
	\begin{equation}\label{varn}
		\mathbb{V}\mathrm{ar}\tilde{{N}}^\alpha(t)=\frac{\lambda+\mu}{\lambda-\mu}(E_{\alpha,1}(2(\lambda-\mu)t^\alpha)-E_{\alpha,1}((\lambda-\mu)t^\alpha)),\ t\ge0.
	\end{equation}
	It is an alternate representation for the variance of linear FBDP.
	
	By substituting (\ref{meanfbdp}) in (\ref{1}), we get
	\begin{equation}\label{p1}
		\mathcal{D}_t^\alpha\mathbb{E}\tilde{B}^\alpha(t)=\lambda E_{\alpha,1}((\lambda-\mu)t^\alpha),\ \ \mathbb{E}\tilde{B}^\alpha(0)=1.
	\end{equation}
	On taking the Laplace transform on both sides of (\ref{p1}) and using (\ref{frderlap}), we get
	\begin{equation*}
		w^\alpha\int_{0}^{\infty}e^{-wt}\mathbb{E}\tilde{B}^\alpha(t)\,\mathrm{d}t-w^{\alpha-1}=\frac{\lambda w^{\alpha-1}}{w^\alpha-(\lambda-\mu)},\ w>0.
	\end{equation*}	
	So,
	\begin{equation*}
		\int_{0}^{\infty}e^{-wt}\mathbb{E}\tilde{B}^\alpha(t)\,\mathrm{d}t=\frac{1}{w}+\frac{\lambda}{\lambda-\mu}\left(\frac{w^{\alpha-1}}{w^\alpha-(\lambda-\mu)}-\frac{1}{w}\right),
	\end{equation*}
	whose inversion yields
	\begin{equation}\label{meanb}
		\mathbb{E}\tilde{B}^\alpha(t)=1+\frac{\lambda}{\lambda-\mu}(E_{\alpha,1}((\lambda-\mu)t^\alpha)-1),\ t\ge0.
	\end{equation}
	
	 On taking the Laplace transform on both sides of (\ref{2}), and using (\ref{meanfbdp}) and (\ref{pvarlap}), we get
	\begin{align}
		\int_{0}^{\infty}e^{-wt}&\mathbb{C}\mathrm{ov}(\tilde{N}^\alpha(t),\tilde{B}^\alpha(t))\,\mathrm{d}t\nonumber\\
		&=\frac{\lambda}{w^\alpha-(\lambda-\mu)}\left(\frac{w^{\alpha-1}}{w^\alpha-(\lambda-\mu)}+\frac{\lambda+\mu}{\lambda-\mu}\left(\frac{w^{\alpha-1}}{w^\alpha-2(\lambda-\mu)}-\frac{w^{\alpha-1}}{w^\alpha-(\lambda-\mu)}\right)\right)\nonumber\\
		&=\frac{\lambda(\lambda+\mu)}{(\lambda-\mu)^2}\left(\frac{w^{\alpha-1}}{w^\alpha-2(\lambda-\mu)}-\frac{w^{\alpha-1}}{w^\alpha-(\lambda-\mu)}\right)-\frac{2\lambda\mu}{\lambda-\mu}\frac{w^{\alpha-1}}{(w^\alpha-(\lambda-\mu))^2},\ w>0.\label{pcovlap}
	\end{align}
	On taking the inverse Laplace transform on both sides of (\ref{pcovlap}), we get
	\begin{align}\label{pcovnb}
		\mathbb{C}\mathrm{ov}(\tilde{N}^\alpha(t),\tilde{B}^\alpha(t))\,\mathrm{d}t&=\frac{\lambda(\lambda+\mu)}{(\lambda-\mu)^2}(E_{\alpha,1}(2(\lambda-\mu)t^\alpha)-E_{\alpha,1}((\lambda-\mu)t^\alpha))\nonumber\\
		&\ \ -\frac{2\lambda\mu}{\lambda-\mu}\int_{0}^{t}x^{\alpha-1}E_{\alpha,\alpha}((\lambda-\mu)x^\alpha)E_{\alpha,1}((t-x)^\alpha(\lambda-\mu))\,\mathrm{d}x,
	\end{align}
	which involves the convolution of one parameter Mittag-Leffler function (for definition see (\ref{mittag})) with $x^{\alpha-1}E_{\alpha,\alpha}(cx)$, $c\in\mathbb{R}$. Here, $E_{\alpha,\alpha}(\cdot)$ is the two parameter Mittag-Leffler function defined by (see Kilbas \textit{et al.} (2006), p. 42)
	\begin{equation*}
		E_{\alpha,\beta}(x)=\sum_{k=0}^{\infty}\frac{x^k}{\Gamma(k\alpha+\beta)},\ x\in\mathbb{R},\ \alpha>0,\,\beta>0.
	\end{equation*}
	For $\beta=1$, it reduces to the one parameter Mittag-Leffler function.
	Also, we have used the following result to obtain (\ref{pcovnb}) (see Kilbas \textit{et al.} (2006), p. 47):
	\begin{equation*}
		\int_{0}^{\infty}e^{-wt}t^{\beta-1}E_{\alpha,\beta}(ct^\alpha)\,\mathrm{d}t=\frac{w^{\alpha-\beta}}{w^\alpha-c},\ |cw^{-\alpha}|<1,\ w>0.
	\end{equation*}
	The integral in (\ref{pcovnb}) can be evaluated as follows:
	\begin{align}
		\int_{0}^{t}x^{\alpha-1}E_{\alpha,\alpha}((\lambda-\mu)x^\alpha)&E_{\alpha,1}((t-x)^\alpha(\lambda-\mu))\,\mathrm{d}x\nonumber\\
		&=\sum_{l=0}^{\infty}\sum_{m=0}^{\infty}\frac{(\lambda-\mu)^{l+m}}{\Gamma(l\alpha+\alpha)\Gamma(m\alpha+1)}\int_{0}^{t}x^{\alpha(l+1)-1}(t-x)^{\alpha m}\,\mathrm{d}x\nonumber\\
		&=\sum_{l=0}^{\infty}\sum_{m=0}^{\infty}\frac{(\lambda-\mu)^{l+m}}{\Gamma(l\alpha+\alpha)\Gamma(m\alpha+1)}t^{\alpha(l+m+1)}\int_{0}^{1}y^{\alpha(l+1)-1}(1-y)^{\alpha m}\,\mathrm{d}y\nonumber\\
		&=\sum_{l=0}^{\infty}\sum_{m=0}^{\infty}\frac{(\lambda-\mu)^{l+m}}{\Gamma(l\alpha+\alpha)\Gamma(m\alpha+1)}t^{\alpha(l+m+1)}\frac{\Gamma(\alpha(l+1))\Gamma(\alpha m+1)}{\Gamma(\alpha(l+m+1)+1)}\nonumber\\
		&=\sum_{l=0}^{\infty}\sum_{k=l}^{\infty}\frac{(\lambda-\mu)^k}{\Gamma(\alpha k+\alpha+1)}t^{\alpha k+\alpha}\nonumber\\
		&=\sum_{k=0}^{\infty}\frac{(\lambda-\mu)^k(k+1)}{\Gamma(\alpha k+\alpha+1)}t^{\alpha k+\alpha}=\frac{t^\alpha}{\alpha}E_{\alpha,\alpha}(t^\alpha(\lambda-\mu)).\label{conint}
	\end{align}
	Thus, 
	\begin{equation}\label{covbn}
		\mathbb{C}\mathrm{ov}(\tilde{N}^\alpha(t),\tilde{B}^\alpha(t))=\frac{\lambda(\lambda+\mu)}{(\lambda-\mu)^2}(E_{\alpha,1}(2(\lambda-\mu)t^\alpha)-E_{\alpha,1}((\lambda-\mu)t^\alpha))-\frac{2\lambda\mu t^\alpha}{(\lambda-\mu)\alpha}E_{\alpha,\alpha}(t^\alpha(\lambda-\mu)),\, t\ge0.
	\end{equation}
	
	 On taking the Laplace transform on both sides (\ref{3}) and using (\ref{pcovlap}), we get
	\begin{align*}
		\int_{0}^{\infty}e^{-wt}\mathbb{V}\mathrm{ar}\tilde{B}^\alpha(t)\,\mathrm{d}t&=\frac{\lambda}{w(w^\alpha-(\lambda-\mu))}+\frac{2\lambda^2(\lambda+\mu)}{(\lambda-\mu)^2}\left(\frac{1}{w(w^\alpha-2(\lambda-\mu))}-\frac{1}{w(w^\alpha-(\lambda-\mu))}\right)\\
		&\ \ -\frac{4\lambda^2\mu}{\lambda-\mu}\frac{1}{w(w^\alpha-(\lambda-\mu))^2}\\
		&=\frac{\lambda}{\lambda-\mu}\left(\frac{w^{\alpha-1}}{w^\alpha-(\lambda-\mu)}-\frac{1}{w}\right)+\frac{\lambda^2(\lambda+\mu)}{(\lambda-\mu)^3}\left(\frac{w^{\alpha-1}}{w^\alpha-2(\lambda-\mu)}-\frac{1}{w}\right)\\
		&\ \ -\frac{2\lambda^2(\lambda+\mu)}{(\lambda-\mu)^3}\left(\frac{w^{\alpha-1}}{w^\alpha-(\lambda-\mu)}-\frac{1}{w}\right)\\
		&\ \ -\frac{4\lambda^2\mu}{(\lambda-\mu)^2}\left(\frac{w^{\alpha-1}}{(w^\alpha-(\lambda-\mu))^2}-\frac{1}{\lambda-\mu}\left(\frac{w^{\alpha-1}}{w^\alpha-(\lambda-\mu)}-\frac{1}{w}\right)\right)\\
		&=\frac{\lambda^2(\lambda+\mu)}{(\lambda-\mu)^3}\left(\frac{w^{\alpha-1}}{w^\alpha-2(\lambda-\mu)}-\frac{1}{w}\right)-\frac{\lambda(\lambda+\mu)}{(\lambda-\mu)^2}\left(\frac{w^{\alpha-1}}{w^\alpha-(\lambda-\mu)}-\frac{1}{w}\right)\\
		&\ \ -\frac{4\lambda^2\mu}{(\lambda-\mu)^2}\frac{w^{\alpha-1}}{(w^\alpha-(\lambda-\mu))^2}.
	\end{align*}
	Its inverse Laplace transform is given by
	\begin{align}
		\mathbb{V}\mathrm{ar}\tilde{B}^\alpha(t)&=\frac{\lambda^2(\lambda+\mu)}{(\lambda-\mu)^3}(E_{\alpha,1}(2(\lambda-\mu)t^\alpha)-1)-\frac{\lambda(\lambda+\mu)}{(\lambda-\mu)^2}(E_{\alpha,1}((\lambda-\mu)t^\alpha)-1)\nonumber\\
		&\ \ -\frac{4\lambda^2\mu}{(\lambda-\mu)^2}\int_{0}^{t}x^{\alpha-1}E_{\alpha,\alpha}((\lambda-\mu)x^\alpha)E_{\alpha,1}((t-x)^\alpha(\lambda-\mu))\,\mathrm{d}x\nonumber\\
		&=\frac{\lambda^2(\lambda+\mu)}{(\lambda-\mu)^3}(E_{\alpha,1}(2(\lambda-\mu)t^\alpha)-1)-\frac{\lambda(\lambda+\mu)}{(\lambda-\mu)^2}(E_{\alpha,1}((\lambda-\mu)t^\alpha)-1)\nonumber\\
		&\ \ -\frac{4\lambda^2\mu t^\alpha}{(\lambda-\mu)^2\alpha}E_{\alpha,\alpha}((\lambda-\mu)t^\alpha),\ t\ge0,\label{varb}
	\end{align}
	where we have used (\ref{conint}) in the last step. This completes the proof.
	
	In the case $\lambda\ne\mu$, for different values of $\alpha$, the variation of correlation coefficient 
	\begin{equation*}
		\mathrm{Corr}(\tilde{N}^\alpha(t),\tilde{B}^\alpha(t))=\frac{\mathbb{C}\mathrm{ov}(\tilde{N}^\alpha(t),\tilde{B}^\alpha(t))}{\sqrt{\mathbb{V}\mathrm{ar}\tilde{N}^\alpha(t)\mathbb{V}\mathrm{ar}\tilde{B}^\alpha(t)}},\ t\ge0
	\end{equation*}
	of $\tilde{N}^\alpha(t)$ and $\tilde{B}^\alpha(t)$ with respect to time is shown in Figure \ref{fig1}. It is observed that they are positively correlated. Further, if $\lambda>\mu$ then the correlation is getting stronger with time and eventually converges to one. Moreover, the convergence to one is faster for the smaller values of $\alpha$. For $\lambda<\mu$, the correlation increases initially for some time then rapidly decreases. Also, the increase and decrease in correlation coefficient is slow for smaller values of $\alpha$.
	\begin{figure}[ht!]
		\includegraphics[width=16cm]{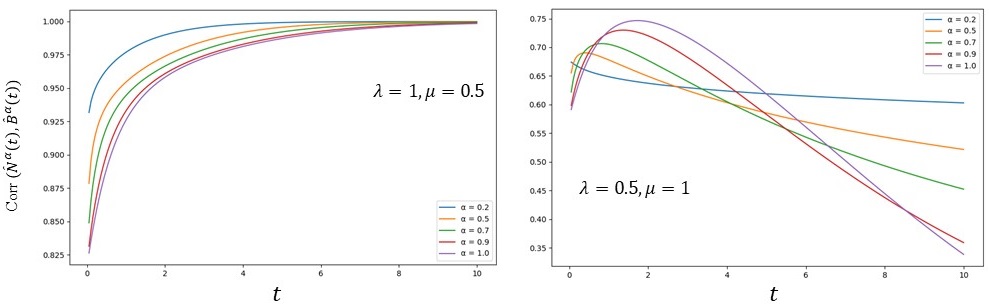}
		\caption{Correlation coefficient of $\tilde{N}^\alpha(t)$ and $\tilde{B}^\alpha(t)$ for different values of $\alpha$}\label{fig1}
	\end{figure}
\begin{remark}\label{nfcumb}
	For $\alpha=1$, we have $E_{1,1}(x)=e^{x}$, $x\in\mathbb{R}$. Thus, for $t\ge0$, the expressions obtained in (\ref{varn}),  (\ref{meanb}), (\ref{covbn}) and (\ref{varb}) reduce to
	\begin{align*}
		\mathbb{V}\mathrm{ar}\tilde{N}(t)&=\frac{\lambda+\mu}{\lambda-\mu}e^{(\lambda-\mu)t}(e^{(\lambda-\mu)t}-1),\\
		\mathbb{E}\tilde{B}(t)&=1+\frac{\lambda}{\lambda-\mu}(e^{(\lambda-\mu)t}-1),\\
		\mathbb{C}\mathrm{ov}(\tilde{N}(t),\tilde{B}(t))&=\frac{\lambda(\lambda+\mu)}{(\lambda-\mu)^2}e^{(\lambda-\mu)t}(e^{(\lambda-\mu)t}-1)-\frac{2\lambda\mu t}{\lambda-\mu}e^{(\lambda-\mu)t}
	\end{align*}
	and
	\begin{equation*}
		\mathbb{V}\mathrm{ar}\tilde{B}(t)=\frac{\lambda^2(\lambda+\mu)}{(\lambda-\mu)^3}(e^{2(\lambda-\mu)t}-1)-\frac{\lambda(\lambda+\mu)}{(\lambda-\mu)^2}(e^{(\lambda-\mu)}-1)-\frac{4\lambda^2\mu t}{(\lambda-\mu)^2}e^{(\lambda-\mu)t},
	\end{equation*}
	respectively, which agree with the results given in Proposition 5 of Vishwakarma and Kataria (2024a) for the case $k_1=k_2=1$.
\end{remark}
\begin{remark}
As done to established the time-changed relationship (\ref{subr}), it can be shown that 
\begin{equation}\label{cumbtcr}
	(\tilde{N}^\alpha(t),\tilde{B}^\alpha(t))\overset{d}{=}(\tilde{N}(T^{2\alpha}(t)),\tilde{B}(T^{2\alpha}(t))),\ t\ge0,
\end{equation}
where the process $\{T^{2\alpha}(t)\}_{t\ge0}$ is as defined in Theorem \ref{thmsub}, and it is independent of $\{(\tilde{N}(t),\tilde{B}(t))\}_{t\ge0}$. Therefore, we have $\tilde{B}^\alpha(t)\overset{d}{=}\tilde{B}(T^{2\alpha}(t))$, $t\ge0$, which can also be used to compute the mean and variance of $\tilde{B}^\alpha(t)$ using Remark \ref{nfcumb}. For example, we have $\mathbb{E}\tilde{B}^\alpha(t)=\int_{0}^{\infty}\mathbb{E}\tilde{B}(x)\mathrm{Pr}\{T^{2\alpha}(t)\in\mathrm{d}x\}$, whose Laplace transform is
\begin{align*}
	\int_{0}^{\infty}e^{-wt}\mathbb{E}\tilde{B}^\alpha(t)\,\mathrm{d}t&=w^{\alpha-1}\int_{0}^{\infty}\left(1+\frac{\lambda}{\lambda-\mu}(e^{(\lambda-\mu)x}-1)\right)e^{-w^\alpha x}\,\mathrm{d}x\\
	&=\frac{1}{w}+\frac{\lambda}{\lambda-\mu}\left(\frac{w^{\alpha-1}}{w^\alpha-(\lambda-\mu)}-\frac{1}{w}\right),\ w>0.
\end{align*}
By using (\ref{mittaglap}), its inverse Laplace transform coincides with (\ref{meanb}).
\end{remark}
\begin{remark}
	Let $\tilde{D}^\alpha(t)$ denote the number of deaths in linear FBDP up to time $t\ge0$. Then, $\tilde{D}^\alpha(t)=\tilde{N}^\alpha(t)-\tilde{B}^\alpha(t)$, and 
	$\mathbb{E}\tilde{D}^\alpha(t)={\mu}{(\lambda-\mu)^{-1}}(E_{\alpha,1}((\lambda-\mu)t^\alpha)-1)$, $t\ge0$. Similarly, other expressions can also be derived. Moreover, it can be observed that $\mathbb{C}\mathrm{ov}(\tilde{B}^\alpha(t),\tilde{D}^\alpha(t))=\mathbb{C}\mathrm{ov}(\tilde{N}^\alpha(t),\tilde{B}^\alpha(t))-\mathbb{V}\mathrm{ar}\tilde{B}^\alpha(t)\ne0$. Thus, the number of births and deaths in linear FBDP are linearly correlated.
\end{remark}
\subsubsection{Asymptotic distribution of cumulative birth process} On using the time-changed relationship (\ref{cumbtcr}), the joint pgf of $(\tilde{N}^\alpha(t),\tilde{B}^\alpha(t))$ is 
$\tilde{G}^\alpha(u,v,t)=\int_{0}^{\infty}\tilde{G}(u,v,x)\mathrm{Pr}\{T^{2\alpha}(t)\in\mathrm{d}x\}$. So, by using (\ref{diffslap}), its Laplace transform is given by
\begin{equation}\label{ascumblap}
	\int_{0}^{\infty}e^{-wt}\tilde{G}^\alpha(u,v,t)\,\mathrm{d}t=w^{\alpha-1}\int_{0}^{\infty}\tilde{G}(u,v,x)e^{-w^\alpha x}\mathrm{d}x,\ w>0,
\end{equation}
where $\tilde{G}(u,v,t)$ is the joint pgf of  $(\tilde{N}(t),\tilde{B}(t))$. It is given by (see Kendall (1948), Eq. (50)) 
\begin{equation}\label{pgfcumbn}
	\tilde{G}(u,v,t)=v\frac{r_1(v)(r_2(v)-u)+r_2(v)(u-r_1(v))e^{-\lambda v(r_2(v)-r_1(v))t}}{(r_2(v)-u)+(u-r_1(v))e^{-\lambda v(r_2(v)-r_1(v))t}},\ 0\leq u\leq1,\,0\leq v\leq1,
\end{equation}
where $r_1(v)$ and $r_2(v)$ are the roots of $\lambda vu^2-(\lambda+\mu)u+\mu=0$ such that $0<r_1(v)<1<r_2(v)$.

On substituting (\ref{pgfcumbn}) in (\ref{ascumblap}), we get
{\scriptsize\begin{align*}
	\int_{0}^{\infty}&e^{-wt}\tilde{G}^\alpha(u,v,t)\,\mathrm{d}t\\
	&=w^{\alpha-1}v\int_{0}^{\infty}\bigg({r_1(v)-r_2(v)\frac{u-r_1(v)}{u-r_2(v)}e^{-\lambda v(r_2(v)-r_1(v))x}}\bigg)\bigg({1-\frac{u-r_1(v)}{u-r_2(v)}e^{-\lambda v(r_2(v)-r_1(v))x}}\bigg)^{-1}e^{-w^\alpha x}\,\mathrm{d}x\\
	&=w^{\alpha-1}v\sum_{k=0}^{\infty}\int_{0}^{\infty}\bigg(r_1(v)\bigg(\frac{u-r_1(v)}{u-r_2(v)}\bigg)^ke^{-\lambda v(r_2(v)-r_1(v))xk}-r_2(v)\bigg(\frac{u-r_1(v)}{u-r_2(v)}\bigg)^{k+1}e^{-\lambda v(r_2(v)-r_1(v))x(k+1)}\bigg){e^{-w^\alpha x}}\,\mathrm{d}x\\
	&=v\sum_{k=0}^{\infty}\bigg(r_1(v)\bigg(\frac{u-r_1(v)}{u-r_2(v)}\bigg)^k\frac{w^{\alpha-1}}{w^\alpha+\lambda v(r_2(v)-r_1(v))k}-r_2(v)\bigg(\frac{u-r_1(v)}{u-r_2(v)}\bigg)^{k+1}\frac{w^{\alpha-1}}{w^\alpha+\lambda v(r_2(v)-r_1(v))(k+1)}\bigg)\\
	&=v\bigg(\frac{r_1(v)}{w}-(r_2(v)-r_1(v))\sum_{k=1}^{\infty}\bigg(\frac{u-r_1(v)}{u-r_2(v)}\bigg)^k\frac{w^{\alpha-1}}{w^\alpha+\lambda v(r_2(v)-r_1(v))k}\bigg),\ w>0.
\end{align*}}
Its inverse Laplace transform is given by
\begin{equation}\label{jointpgfcumb}
	\tilde{G}^\alpha(u,v,t)=v\bigg(r_1(v)-(r_2(v)-r_1(v))\sum_{k=1}^{\infty}\bigg(\frac{u-r_1(v)}{u-r_2(v)}\bigg)^kE_{\alpha,1}(-\lambda v(r_2(v)-r_1(v))kt^\alpha)\bigg).
\end{equation}

Next, we derive the asymptotic distribution of $\tilde{B}^\alpha(t)$ when the birth rate is less than or equal to the death rate. For $\lambda\leq\mu$, on letting $t\to\infty$ in (\ref{jointpgfcumb}), we get
$\tilde{G}^\alpha(1,v)=\lim_{t\to\infty}\tilde{G}^\alpha(1,v,t)$ $=vr_1(v)$, $0\leq v\leq1$. Thus, the process $\{\tilde{B}^\alpha(t)\}_{t\ge0}$ has stable limiting distribution when $\lambda\leq\mu$ and $\tilde{B}^\alpha(t)$ converges in distribution to a random variable $\tilde{B}^\alpha$ as $t\to\infty$. As $r_1(v)$ is the smaller root of $\lambda vu^2-(\lambda+\mu)u+\mu=0$, the pgf of $\tilde{B}^\alpha$ is given by
\begin{equation*}
	\tilde{G}^\alpha(1,v)=(2\lambda)^{-1}((\lambda+\mu)-\sqrt{(\lambda+\mu)^2-4\lambda\mu v}),\ 0\leq v\leq1.
\end{equation*}
So, by using the generalized binomial theorem, we get
\begin{equation*}
	\tilde{G}^\alpha(1,v)=\frac{\lambda+\mu}{2\lambda}\sum_{k=1}^{\infty}\frac{1\cdot3\cdot5\cdots(2k-3)}{2^kk!}\bigg(\frac{4\lambda\mu v}{(\lambda+\mu)^2}\bigg)^k=\frac{\lambda+\mu}{2\lambda}\sum_{k=1}^{\infty}\frac{(2k)!}{2^{2k}(k!)^2(2k-1)}\bigg(\frac{4\lambda\mu v}{(\lambda+\mu)^2}\bigg)^k.
\end{equation*}
Thus, for $\lambda\leq\mu$, the probability mass function of $\tilde{B}^\alpha$ is given by
\begin{equation*}
	\mathrm{Pr}\{\tilde{B}^\alpha=b\}=\frac{(2b)!}{(b!)^2(2b-1)}\frac{(\lambda\mu)^b}{2\lambda(\lambda+\mu)^{2b-1}},\ b\ge1.
\end{equation*}
Its mean and variance are given as follows:
\begin{equation*}
	\mathbb{E}\tilde{B}^\alpha=\frac{\mu}{\mu-\lambda}\ \ \text{and}\ \ \mathbb{V}\mathrm{ar}\tilde{B}^\alpha=\frac{\lambda\mu(\lambda+\mu)}{(\mu-\lambda)^3}.
\end{equation*}

\section{Extinction time of a time-changed linear BDP}
In this section, we consider a time-changed BDP where the time changes according to an inverse subordinator. For linear birth and death rates such that the birth rate is lower than the death rate, we study the properties of extinction time for the time-changed linear BDP.

 Let $\{S^\phi(t)\}_{t\ge0}$ be a subordinator with Laplace exponent $\phi(\cdot)$ and $\{E^\phi(t)\}_{t\ge0}$ be its first hitting time (for definition see Section \ref{pre}). We consider a time-changed process $\{N^\phi(t)\}_{t\ge0}$ defined as follows:
\begin{equation}\label{tcbdp}
	N^\phi(t)\coloneq N(E^\phi(t)),\ t\ge0,
\end{equation}
where the inverse subordinator $\{E^\phi(t)\}_{t\ge0}$ is independent of the BDP $\{N(t)\}_{t\ge0}$.

As mentioned in Remark \ref{re2.2}, the process defined in (\ref{tcbdp}) reduces to the FBDP when $E^\phi(t)$ is an inverse stable subordinator. The time-changed Markov processes also known as the semi-Markov processes are topic of study for last few years (see Kurtz (1971), Kaspi and Maisonneuve (1988), Orsingher \textit{et al.} (2018), and references therein).  
 \begin{proposition}
 	The state probabilities $p^{\phi}(n,t)=\mathrm{Pr}\{N^\phi(t)=n\}$, $n\ge0$ solve the following system of fractional differential equations:
 	\begin{equation}
 	\mathscr{D}_tp^\phi(n,t)-p(n,0)\bar{\nu}(t)=-(\lambda_n+\mu_n)p^\phi(n,t)+\lambda_{n-1}p^\phi(n-1,t)+\mu_{n+1}p^\phi(n+1,t),\ n\ge0,
 	\end{equation}
 	with initial condition $p^\phi(1,0)=1$. Here, $\bar{\nu}(t)=\nu(t,\infty)$ and the operator $\mathscr{D}_t$ is the generalized fractional derivative as defined in (\ref{gfrder}).
 \end{proposition}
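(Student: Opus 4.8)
The plan is to work directly from the subordination representation (\ref{tcbdp}) together with the governing equation (\ref{invsubeq}) for the density of the inverse subordinator, rather than through Laplace transforms as in Theorem \ref{thmsub}. Writing $h(x,t)=\mathrm{Pr}\{E^\phi(t)\in\mathrm{d}x\}/\mathrm{d}x$ and using the independence of $\{E^\phi(t)\}_{t\ge0}$ and $\{N(t)\}_{t\ge0}$, I would start from
\begin{equation*}
	p^\phi(n,t)=\int_{0}^{\infty}p(n,x)h(x,t)\,\mathrm{d}x,\ n\ge0.
\end{equation*}
Applying the operator $\mathscr{D}_t$ of (\ref{gfrder}) and passing it under the integral sign (the first point requiring justification, via Fubini's theorem for the integral representation of $\mathscr{D}_t$ against $\nu$), I would then invoke the Cauchy problem (\ref{invsubeq}) to replace $\mathscr{D}_t h(x,t)$ by $-\mathcal{D}_x h(x,t)=-\partial_x h(x,t)$, obtaining
\begin{equation*}
	\mathscr{D}_tp^\phi(n,t)=-\int_{0}^{\infty}p(n,x)\,\partial_x h(x,t)\,\mathrm{d}x.
\end{equation*}

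The next step is an integration by parts in $x$. The boundary contribution at $x=\infty$ vanishes because $h(x,t)\to0$ while $p(n,x)\le1$, whereas at $x=0$ one uses the boundary condition $h(0,t)=\bar{\nu}(t)$ from (\ref{invsubeq}); this produces the forcing term $p(n,0)\bar{\nu}(t)$. Hence
\begin{equation*}
	\mathscr{D}_tp^\phi(n,t)=p(n,0)\bar{\nu}(t)+\int_{0}^{\infty}\big(\mathcal{D}_x p(n,x)\big)h(x,t)\,\mathrm{d}x.
\end{equation*}
Since $p(n,x)$ solves the classical system (\ref{bdpequ}), I would substitute $\mathcal{D}_x p(n,x)=-(\lambda_n+\mu_n)p(n,x)+\lambda_{n-1}p(n-1,x)+\mu_{n+1}p(n+1,x)$ into the integrand and recognise the three resulting integrals as $p^\phi(n,t)$, $p^\phi(n-1,t)$ and $p^\phi(n+1,t)$, respectively. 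Transposing the forcing term to the left-hand side then yields exactly the asserted system.

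Finally, the initial condition follows from $E^\phi(0)=0$ and $N(0)=1$, so that $p^\phi(1,0)=\mathrm{Pr}\{N(0)=1\}=1$ and $p^\phi(n,0)=0$ for $n\ne1$; note $p(n,0)\bar{\nu}(t)$ is then nonzero only for $n=1$, consistent with the single progenitor. I expect the main obstacle to be the analytic justification of the two interchange steps: moving the integral operator $\mathscr{D}_t$ inside the $x$-integral, and the differentiation together with the integration by parts in $x$. Both rest on adequate decay and smoothness of $h(x,t)$ in $x$ together with the uniform bound $\sum_{n}p(n,x)\le1$ that makes the dominated-convergence and Fubini arguments go through. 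The algebraic recombination of the three integrals is then routine.
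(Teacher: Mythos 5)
Your proposal is correct and follows essentially the same route as the paper's own proof: subordination representation $p^\phi(n,t)=\int_0^\infty p(n,x)h(x,t)\,\mathrm{d}x$, application of $\mathscr{D}_t$ under the integral with the Cauchy problem (\ref{invsubeq}), integration by parts in $x$ picking up the boundary term $p(n,0)h(0,t)=p(n,0)\bar{\nu}(t)$, and substitution of the classical system (\ref{bdpequ}). The only difference is that you flag the interchange-of-limits justifications (Fubini, dominated convergence, vanishing of the boundary term at infinity) explicitly, which the paper passes over in silence.
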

 \begin{proof}
 	From (\ref{tcbdp}), we have
 	\begin{equation}\label{lp1}
 		p^\phi(n,t)=\int_{0}^{\infty}p(n,x)h(x,t)\,\mathrm{d}x,\ n\ge0,
 	\end{equation} 
 	where $p(n,t)$ is the distribution of BDP and $h(x,t)$ is the density of $E^\phi(t)$.
 	
 	On applying the operator $\mathscr{D}_t$ on both sides of (\ref{lp1}) and using (\ref{invsubeq}), we get
 	\begin{equation*}
 		\mathscr{D}_tp^\phi(n,t)=-\int_{0}^{\infty}p(n,x)\mathcal{D}_xh(x,t)\,\mathrm{d}x=p(n,0)h(0,t)+\int_{0}^{\infty}h(x,t)\mathcal{D}_xp(n,x)\,\mathrm{d}x,\ n\ge0.
 	\end{equation*}
 	Finally, the result follows from on using (\ref{bdpequ}) and (\ref{lp1}).
 \end{proof}
\subsection{Extinction times of time-changed linear BDP}
 Let us consider a time-changed linear BDP $\{\tilde{N}_\phi(t)\}_{t\ge0}$ defined by 
\begin{equation}\label{tclbdp}
	\tilde{N}_\phi(t)\coloneqq\tilde{N}(E^\phi(t)),
\end{equation} 
where the inverse subordinator $\{E^\phi(t)\}_{t\ge0}$ is independent of the linear BDP $\{\tilde{N}(t)\}_{t\ge0}$. 

 Let $\tilde{\mathscr{T}}$ and $\tilde{\mathscr{T}}^\phi$ be the extinction times of linear BDP and its time-changed variants, respectively. That is,
$\tilde{\mathscr{T}}=\inf\{t>0:\tilde{N}(t)=0\}$ and $\tilde{\mathscr{T}}^\phi=\inf\{t>0:\tilde{N}_\phi(t)=0\}$. 
Further, we denote their distribution functions by $F(t)=\mathrm{Pr}\{\tilde{\mathscr{T}}\leq t\}$ and $F^\phi(t)=\mathrm{Pr}\{\tilde{\mathscr{T}}^\phi\leq t\}$, respectively. As immigration does not occur in the linear BDP, we have 
$F(t)=\mathrm{Pr}\{\tilde{N}(t)=0\}$ and $F^\phi(t)=\mathrm{Pr}\{\tilde{N}_\phi(t)=0\}$
for $t\ge0$ with $F(0)=F^\phi(0)=0$.

Next, we provide an asymptotic result for the tail of distribution of the extinction time of linear FBDP when the rate of birth is smaller than the rate of death. 
\begin{theorem}
If $\phi(\cdot)$ is slowly varying at zero with index $0\leq\delta<1$ then for $\lambda<\mu$, the function $t\mapsto\mathrm{Pr}\{\tilde{\mathscr{T}}^\phi>t\}$ is regularly varying at infinity and
\begin{equation}
	\mathrm{Pr}\{\tilde{\mathscr{T}}^\phi>t\}\sim-\ln\bigg(1-\frac{\lambda}{\mu}\bigg)\frac{\phi(1/t)}{\lambda\Gamma(1-\delta)}\ \ \text{as}\ t\to\infty.
\end{equation}	
\end{theorem}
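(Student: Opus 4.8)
The plan is to reduce the statement to a Tauberian analysis of the survival function of the extinction time. Write $U^\phi(t)=\mathrm{Pr}\{\tilde{\mathscr{T}}^\phi>t\}=1-F^\phi(t)$ and $U(x)=\mathrm{Pr}\{\tilde{\mathscr{T}}>x\}=1-F(x)$ for the underlying linear BDP. Since $\tilde N_\phi(t)=\tilde N(E^\phi(t))$ by (\ref{tclbdp}) with $E^\phi$ independent of $\tilde N$, conditioning on $E^\phi(t)$ gives
\[
U^\phi(t)=\int_0^\infty U(x)\,h(x,t)\,\mathrm{d}x,
\]
where $h(x,t)$ is the density of $E^\phi(t)$. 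For $\lambda<\mu$ I would read the extinction probability of the linear BDP from (\ref{lsp0}) at $\alpha=1$ (using $E_{1,1}(y)=e^y$), namely $U(x)=\frac{\mu-\lambda}{\lambda}\sum_{l=1}^\infty(\lambda/\mu)^l e^{-l(\mu-\lambda)x}$, which is integrable. The constant in the statement then emerges as the mean extinction time of the ordinary linear BDP:
\[
\int_0^\infty U(x)\,\mathrm{d}x=\frac{1}{\lambda}\sum_{l=1}^\infty\frac{(\lambda/\mu)^l}{l}=-\frac{1}{\lambda}\ln\!\left(1-\frac{\lambda}{\mu}\right)=:c.
\]

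Next I would pass to Laplace transforms in $t$. Using the standard relation $\int_0^\infty e^{-wt}h(x,t)\,\mathrm{d}t=\frac{\phi(w)}{w}e^{-x\phi(w)}$ (which reduces to (\ref{ainvsublap}) when $\phi(w)=w^\alpha$ and follows in general from the Cauchy problem (\ref{invsubeq}) upon Laplace inversion), Fubini gives
\[
\int_0^\infty e^{-wt}U^\phi(t)\,\mathrm{d}t=\frac{\phi(w)}{w}\int_0^\infty U(x)e^{-x\phi(w)}\,\mathrm{d}x,\ w>0.
\]
Since $\phi$ is a Laplace exponent, $\phi(w)\downarrow 0$ as $w\downarrow 0$, so dominated convergence (with dominating function $U\in L^1$) yields $\int_0^\infty U(x)e^{-x\phi(w)}\,\mathrm{d}x\to c$. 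Hence
\[
\int_0^\infty e^{-wt}U^\phi(t)\,\mathrm{d}t\sim c\,\frac{\phi(w)}{w}\quad(w\to 0^+).
\]

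Finally I would invoke the Karamata Tauberian theorem. Writing $\phi(w)=w^\delta\ell(w)$ with $\ell$ slowly varying at $0$, the Laplace transform of $U^\phi$ is asymptotic to $c\,w^{-(1-\delta)}\ell(w)$, i.e. regularly varying at $0$ with index $-(1-\delta)$ where $0<1-\delta\le 1$. Because $U^\phi$ is monotone (the Tauberian side condition), the theorem transfers this to
\[
U^\phi(t)\sim\frac{c}{\Gamma(1-\delta)}\,t^{-\delta}\ell(1/t)=\frac{c}{\Gamma(1-\delta)}\,\phi(1/t)\quad(t\to\infty),
\]
which is exactly $\mathrm{Pr}\{\tilde{\mathscr{T}}^\phi>t\}\sim-\ln(1-\lambda/\mu)\,\phi(1/t)/(\lambda\Gamma(1-\delta))$ and is regularly varying at infinity with index $-\delta$. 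The main obstacle is the Tauberian step: one must verify the monotonicity (Tauberian) condition on $U^\phi$ and correctly translate regular variation of $\phi$ at $0$ into the exponent $1-\delta$ and the factor $\Gamma(1-\delta)$, while confirming that replacing $\int_0^\infty U(x)e^{-x\phi(w)}\,\mathrm{d}x$ by its limit $c$ is genuinely of lower order and does not perturb the leading asymptotics. The hypothesis $\delta<1$ is essential, as it guarantees $1-\delta>0$ so that the Tauberian theorem applies in this form.
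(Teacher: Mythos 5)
Your proof is correct, but it follows a genuinely different route from the paper's. The paper's proof is short: it views $\tilde{\mathscr{T}}$ and $\tilde{\mathscr{T}}^\phi$ as first exit times of $\{\tilde{N}(t)\}_{t\ge0}$ and $\{\tilde{N}_\phi(t)\}_{t\ge0}$ from the open set $(0,\infty)$, computes from (\ref{extprob}) that $\mathbb{E}\tilde{\mathscr{T}}=\int_0^\infty\mathrm{Pr}\{\tilde{\mathscr{T}}>t\}\,\mathrm{d}t=-\frac{1}{\lambda}\ln\left(1-\frac{\lambda}{\mu}\right)<\infty$, and then invokes Corollary 2.3 of Ascione, Pirozzi and Toaldo (2020) as a black box to conclude. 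You instead reprove the relevant special case of that corollary from scratch: conditioning on $E^\phi(t)$ to get $U^\phi(t)=\int_0^\infty U(x)h(x,t)\,\mathrm{d}x$ (valid because state $0$ is absorbing, as the paper itself notes when writing $F^\phi(t)=\mathrm{Pr}\{\tilde{N}_\phi(t)=0\}$), using the transform $\int_0^\infty e^{-wt}h(x,t)\,\mathrm{d}t=\frac{\phi(w)}{w}e^{-x\phi(w)}$ (consistent with (\ref{invsubeq}) and reducing to (\ref{ainvsublap}) in the stable case), dominated convergence to identify the constant $c=\mathbb{E}\tilde{\mathscr{T}}$, and Karamata's Tauberian theorem for monotone functions with index $\rho=1-\delta>0$. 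The obstacles you flag are in fact harmless: $U^\phi$ is non-increasing as a survival function, so the Tauberian side condition holds; replacing $\int_0^\infty U(x)e^{-x\phi(w)}\,\mathrm{d}x$ by its limit $c$ is a genuine asymptotic equivalence because the prefactor $\phi(w)/w$ is exact, not approximate; and the hypothesis $\delta<1$ (read, as you correctly do, as regular variation at zero of index $\delta$) is precisely what keeps $\rho>0$ so that Feller's form of the theorem applies. Your constant $-\frac{1}{\lambda}\ln\left(1-\frac{\lambda}{\mu}\right)$ agrees with the paper's $\mathbb{E}\tilde{\mathscr{T}}$, which you obtain by summing the exponential series coming from (\ref{lsp0}) at $\alpha=1$ rather than by integrating the tail of (\ref{extprob}). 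What your argument buys is a self-contained, elementary proof (Laplace transforms plus Karamata) that makes transparent where both the constant --- the mean extinction time of the base process --- and the regular-variation index come from; what the paper's citation buys is brevity and generality, since the result of Ascione \textit{et al.} covers first exit times from general open sets for general time-changed processes, rather than only the absorption time of the linear BDP.
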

\begin{proof}
	Let us consider the open set $(0,\infty)$ of the real line. Then, the extinction time $\tilde{\mathscr{T}}$ and $\tilde{\mathscr{T}}^\phi$ are the first exit times of $\{\tilde{N}(t)\}_{t\ge0}$ and $\{\tilde{N}_\phi(t)\}_{t\ge0}$ from $(0,\infty)$ that are defined as follows:
	\begin{equation*}
		\tilde{\mathscr{T}}=\inf\{t>0:N(t)\notin(0,\infty)\}\ \ \text{and}\ \ \tilde{\mathscr{T}}^\phi=\inf\{t>0:N^\phi(t)\notin(0,\infty)\}.
	\end{equation*}
	For $\lambda<\mu$, the extinction probability of the linear BDP is given by (see Bailey (1964), pp. 94)
	\begin{equation}\label{extprob}
		\mathrm{Pr}\{N(t)=0\}=
			\frac{\mu-\mu e^{-t(\mu-\lambda)}}{\mu-\lambda e^{-t(\mu-\lambda)}},\ \lambda<\mu.
	\end{equation}
	Thus, for $\lambda<\mu$, we have
	\begin{align*}
		\mathbb{E}\tilde{\mathscr{T}}=\int_{0}^{\infty}\mathrm{Pr}\{\tilde{\mathscr{T}}>t\}\,\mathrm{d}t&=\int_{0}^{\infty}\bigg(1-\frac{\mu-\mu e^{-t(\mu-\lambda)}}{\mu-\lambda e^{-t(\mu-\lambda)}}\bigg)\,\mathrm{d}t\\
		&=\int_{0}^{\infty}\frac{(\mu-\lambda)e^{-t(\mu-\lambda)}}{\mu-\lambda e^{-t(\mu-\lambda)}}\,\mathrm{d}t\\
		&=\int_{0}^{1}\frac{\mathrm{d}u}{\mu-\lambda u}=-\frac{1}{\lambda}\ln\bigg(1-\frac{\lambda}{\mu}\bigg)<\infty.
	\end{align*}
	The proof follows using the Corollary 2.3 of Ascione \textit{et al.} (2020).
\end{proof}
\begin{remark}
	If $\phi(\eta)=\eta^\alpha$, $\alpha\in(0,1)$ which slowly varying at zero with index $\alpha$, then for $\lambda<\mu$, in view of Remark \ref{re2.2}, the following result holds for the tail distribution of extinction time $\tilde{\mathscr{T}}^\alpha$ of the linear FBDP:
\begin{equation}\label{flbdpeta}		\mathrm{Pr}\{\tilde{\mathscr{T}}^\alpha>t\}\sim-\ln\bigg(1-\frac{\lambda}{\mu}\bigg)\frac{t^{-\alpha}}{\lambda\Gamma(1-\alpha)}\ \ \text{as}\ t\to\infty.
\end{equation}

Alternatively, from (\ref{lsp0}), we have
\begin{equation*}
	\mathrm{Pr}\{\tilde{\mathscr{T}}^\alpha>t\}=1-\mathrm{Pr}\{\tilde{N}^\alpha(t)=0\}=\frac{(\mu-\lambda)}{\lambda}\sum_{l=1}^{\infty}\bigg(\frac{\lambda}{\mu}\bigg)^lE_{\alpha,1}(-t^\alpha l (\mu-\lambda)),\ \lambda<\mu.
\end{equation*}
By using the following approximation (see Scalas (2006), Ascione \textit{et al.} (2020), Eq. (2.42)), we get
\begin{equation*}
	E_{\alpha,1}(-ct^\alpha)\sim\frac{t^{-\alpha}}{c\Gamma(1-\alpha)},\ c>0,\,\alpha\in(0,1)\ \ \text{as}\ \ t\to\infty,
\end{equation*}
we get
\begin{equation*}
	\mathrm{Pr}\{\tilde{\mathscr{T}}^\alpha>t\}\sim\frac{1}{\lambda\Gamma(1-\alpha)}\sum_{l=1}^{\infty}\bigg(\frac{\lambda}{\mu}\bigg)^l\frac{1}{l}=\frac{-1}{\lambda\Gamma(1-\alpha)}\ln\bigg(1-\frac{\lambda}{\mu}\bigg)\ \ \text{as}\ \ t\to\infty,
\end{equation*}
which agrees with (\ref{flbdpeta}).
\end{remark}

Next, we study the asymptotic behaviour of the distribution function of extinction time at zero for the process defined in (\ref{tclbdp}). First, we proof the following lemma.
\begin{lemma}\label{llema}
	For $\lambda<\mu$, the distribution function $F(t)$ of the extinction time of linear BDP is regularly varying at zero with index one. 
\end{lemma}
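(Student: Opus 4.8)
The plan is to read off the asymptotics directly from the explicit extinction probability of the linear BDP. For $\lambda<\mu$, formula (\ref{extprob}) gives
\[
F(t)=\mathrm{Pr}\{\tilde{N}(t)=0\}=\frac{\mu-\mu e^{-t(\mu-\lambda)}}{\mu-\lambda e^{-t(\mu-\lambda)}},\ t\ge0.
\]
Writing $\beta=\mu-\lambda>0$, I would expand the exponential $e^{-\beta t}$ about $t=0$. The numerator then equals $\mu\bigl(1-e^{-\beta t}\bigr)=\mu\beta t+O(t^2)$, while the denominator $\mu-\lambda e^{-\beta t}$ converges to $\mu-\lambda=\beta>0$ and admits the expansion $\beta\bigl(1+\lambda t+O(t^2)\bigr)$. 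Dividing these two expansions yields $F(t)=\mu t\,(1+O(t))$, so that $F(t)\sim\mu t$ as $t\to0^+$.

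To conclude regular variation at zero with index one, I would write $F(t)=t\,\ell(t)$ with $\ell(t)=F(t)/t$. Since $\ell(t)\to\mu>0$ as $t\to0^+$, the factor $\ell$ satisfies $\ell(xt)/\ell(t)\to1$ for every $x>0$ and is therefore slowly varying at zero; hence $F$ is regularly varying at zero with index one. Equivalently, one checks the defining limit directly: for each $x>0$,
\[
\lim_{t\to0^+}\frac{F(xt)}{F(t)}=\lim_{t\to0^+}\frac{\mu xt\,(1+O(t))}{\mu t\,(1+O(t))}=x,
\]
which is exactly the index-one regular variation property at the origin.

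The computation is elementary and I do not anticipate a substantive obstacle. The only points requiring care are that the denominator in (\ref{extprob}) remains bounded away from zero near the origin (it tends to $\mu-\lambda>0$, so the quotient expansion is legitimate), and that the limit in the definition of regular variation must be taken at $t\to0^+$ rather than at infinity. Correctly identifying the leading coefficient $\mu$ is the substantive content, after which the conclusion is immediate; this lemma then feeds, via a standard Tauberian argument, into the subsequent analysis of the time-changed extinction time $\tilde{\mathscr{T}}^\phi$.
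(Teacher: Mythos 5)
Your proposal is correct and follows essentially the same route as the paper: both arguments start from the explicit extinction probability (\ref{extprob}) and reduce the claim to the elementary limit $\lim_{t\to0^+}F(xt)/F(t)=x$, with the denominator $\mu-\lambda e^{-t(\mu-\lambda)}\to\mu-\lambda>0$ handled identically. Your intermediate refinement $F(t)\sim\mu t$ is a slightly sharper statement than the paper records, but it is the same Taylor-expansion computation.
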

\begin{proof}
	For $t>0$, we have $F(t)=\mathrm{Pr}\{N(0)=0\}$. By using (\ref{extprob}), we get
	\begin{align*}
		\lim_{t\to0}\frac{F(xt)}{F(t)}&=\lim_{t\to0}\frac{1- e^{-tx(\mu-\lambda)}}{\mu-\lambda e^{-tx(\mu-\lambda)}}\frac{\mu-\lambda e^{-t(\mu-\lambda)}}{1-e^{-t(\mu-\lambda)}}\\
		&=\lim_{t\to0}\frac{1- e^{-tx(\mu-\lambda)}}{1- e^{-t(\mu-\lambda)}}=x,\ x>0.
	\end{align*}
	This completes the proof.
\end{proof}
\begin{theorem}\label{thm5.2}
	Let $\phi(\cdot)$ be regularly varying at infinity with index $\delta>0$. Then, for $\lambda<\mu$, the distribution function $F^\phi(t)$ is regularly varying at zero with index $\delta$ and
	\begin{equation}
		F^\phi(t)\sim\frac{1}{\Gamma(1+\delta)}\frac{\mu-\mu \exp{\left(\frac{\lambda-\mu}{\phi(1/t)}\right)}}{\mu-\lambda \exp{\left(\frac{\lambda-\mu}{\phi(1/t)}\right)}}\ \ \text{as}\ \ t\to0.
	\end{equation}
\end{theorem}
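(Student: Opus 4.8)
The plan is to pass to Laplace transforms in $t$ and apply Karamata's Tauberian machinery twice, feeding in the small-time regular variation of $F$ from Lemma \ref{llema}. Since $\{E^\phi(t)\}_{t\ge0}$ is independent of the linear BDP, conditioning on $E^\phi(t)$ (the $n=0$ instance of (\ref{lp1})) gives
\[
	F^\phi(t)=\mathrm{Pr}\{\tilde N(E^\phi(t))=0\}=\int_0^\infty F(x)\,h(x,t)\,\mathrm{d}x,
\]
where $h(x,t)$ is the density of $E^\phi(t)$ as in (\ref{invsubeq}). Taking the Laplace transform in $t$ and using the inverse-subordinator identity $\int_0^\infty e^{-wt}h(x,t)\,\mathrm{d}t=\frac{\phi(w)}{w}e^{-x\phi(w)}$ (which specializes to (\ref{ainvsublap}) when $\phi(w)=w^\alpha$), I would obtain
\[
	\int_0^\infty e^{-wt}F^\phi(t)\,\mathrm{d}t=\frac{\phi(w)}{w}\,\widehat F(\phi(w)),\qquad \widehat F(s)\coloneqq\int_0^\infty e^{-sx}F(x)\,\mathrm{d}x.
\]

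Next I would read off the behaviour of the right-hand side as $w\to\infty$. From (\ref{extprob}) one checks directly that $F(x)\sim\mu x$ as $x\to0$, so by the Abelian half of Karamata's theorem $\widehat F(s)\sim\mu\,\Gamma(2)\,s^{-2}=\mu s^{-2}$ as $s\to\infty$. Because $\phi$ is regularly varying at infinity with index $\delta>0$ we have $\phi(w)\to\infty$, so composition of regularly varying functions yields $\widehat F(\phi(w))\sim\mu\,\phi(w)^{-2}$, whence
\[
	\int_0^\infty e^{-wt}F^\phi(t)\,\mathrm{d}t\sim\frac{\mu}{w\,\phi(w)}\qquad\text{as }w\to\infty.
\]
Writing $\phi(w)=w^\delta L_0(w)$ with $L_0$ slowly varying, this transform is regularly varying at infinity with index $-(1+\delta)$.

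I would then invoke the Tauberian half of Karamata's theorem. Since $F^\phi$ is a distribution function, hence nondecreasing, the monotonicity hypothesis is automatic, and the transform asymptotics translate into
\[
	F^\phi(t)\sim\frac{\mu}{\Gamma(1+\delta)}\,\frac{t^\delta}{L_0(1/t)}=\frac{\mu}{\Gamma(1+\delta)\,\phi(1/t)}\qquad\text{as }t\to0,
\]
which in particular exhibits $F^\phi$ as regularly varying at zero with index $\delta$. Finally, since $1/\phi(1/t)\to0$ and $F(x)\sim\mu x$, one has $\frac{\mu}{\phi(1/t)}\sim F\!\left(1/\phi(1/t)\right)$, and substituting $x=1/\phi(1/t)$ into (\ref{extprob}) rewrites this as the claimed $\frac{1}{\Gamma(1+\delta)}\,\frac{\mu-\mu e^{(\lambda-\mu)/\phi(1/t)}}{\mu-\lambda e^{(\lambda-\mu)/\phi(1/t)}}$.

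The two asymptotic evaluations are routine; the step needing the most care is the composition $\widehat F(\phi(w))\sim\mu\phi(w)^{-2}$, where I must justify substituting the equivalent $\mu s^{-2}$ at $s=\phi(w)$ as $w\to\infty$ — precisely the point at which $\delta>0$ (ensuring $\phi(w)\to\infty$) is essential — together with checking that the hypotheses of the Tauberian theorem (monotonicity of $F^\phi$ and a genuinely regularly varying transform) are in force so that the inversion is legitimate.
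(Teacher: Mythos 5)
Your proof is correct, and it shares the paper's analytic engine---regular variation of $F$ at zero, composition with $\phi$, and Karamata Tauberian inversion---but it reaches the key transform identity by a genuinely different route. The paper works with Laplace--Stieltjes transforms and derives $\mathcal{L}(F^\phi(t);w)=\mathcal{L}(F(t);\phi(w))$ from the first-passage representation $\tilde{\mathscr{T}}^\phi\overset{d}{=}S^\phi(\tilde{\mathscr{T}})$, which it justifies by asserting $S^\phi(E^\phi(t))=t$ a.s.; that assertion is actually delicate (for a driftless subordinator with infinite L\'evy measure the overshoot $S^\phi(E^\phi(t))-t$ is a.s.\ strictly positive for fixed $t$), and the representation really rests on $\tilde{\mathscr{T}}$ having a density and being independent of $S^\phi$. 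You sidestep this entirely: conditioning on $E^\phi(t)$ (the $n=0$ case of (\ref{lp1})) together with the identity $\int_0^\infty e^{-wt}h(x,t)\,\mathrm{d}t=\frac{\phi(w)}{w}e^{-x\phi(w)}$ gives $\int_0^\infty e^{-wt}F^\phi(t)\,\mathrm{d}t=\frac{\phi(w)}{w}\widehat F(\phi(w))$, which upon multiplying by $w$ is precisely the paper's identity; the density formula you quote is the standard general-$\phi$ version of (\ref{ainvsublap}) and is consistent with the paper's standing assumption that $E^\phi(t)$ has a density $h(x,t)$. Your middle steps are also more concrete: where the paper combines Lemma \ref{llema} with Feller's Tauberian theorem and the composition result for regularly varying functions (Bingham et al., Proposition 1.5.7), you use the explicit asymptotics $F(x)\sim\mu x$ from (\ref{extprob}), so the composition step $\widehat F(\phi(w))\sim\mu\,\phi(w)^{-2}$ is a direct substitution (needing only $\phi(w)\to\infty$, i.e.\ $\delta>0$) and the constant is tracked explicitly; your closing observation $\mu/\phi(1/t)\sim F\left(1/\phi(1/t)\right)$ then recovers the exact form stated in the theorem. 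What the paper's route buys is the semi-Markov identity $\tilde{\mathscr{T}}^\phi\overset{d}{=}S^\phi(\tilde{\mathscr{T}})$ itself, which is of independent interest and aligns with the exit-time framework of Ascione et al.\ used earlier in the section; what yours buys is a shorter, more self-contained and arguably more robust argument whose only external inputs are the inverse-subordinator density formula and the classical Abelian/Tauberian theorems, with the monotonicity hypothesis for the Tauberian inversion explicitly checked.
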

\begin{proof}
	Let us consider the Laplace transform of $F(t)$ and $F^\phi(t)$ as follows:
	\begin{equation}\label{lapdist}
		\mathcal{L}(F(t);w)=\int_{0}^{\infty}e^{-wt}\,\mathrm{d}F(t),\ w>0
	\end{equation}
	and 
	\begin{equation}\label{laptcdist}
		\mathcal{L}(F^\phi(t);w)=\int_{0}^{\infty}e^{-wt}\,\mathrm{d}F^\phi(t),\ w>0.
	\end{equation}
	In view of Lemma \ref{llema}, and using Tauberian theorem (see Feller (1971), p. 445, Theorem 2 and Theorem 3 of Section XIII.5), we have that the function (\ref{lapdist}) is regularly varying at infinity with index one and
	\begin{equation}\label{tblim}
		\mathcal{L}(F(t);w)\sim F(1/w)\ \ \text{as}\ \ w\to\infty.
	\end{equation}
	 From the definition of $E^\phi(t)$, it follows that $S^\phi(E^\phi(t))=t$ almost surely. So,
	$\mathrm{Pr}\{\tilde{\mathscr{T}}^\phi>t\}=\mathrm{Pr}\{\tilde{\mathscr{T}}>E^\phi(t)\}$ which implies $\mathrm{Pr}\{\tilde{\mathscr{T}}^\phi>t\}=\mathrm{Pr}\{S^\phi(\tilde{\mathscr{T}})>t\}$. From (\ref{laptcdist}), we have
	\begin{align*}
		\mathcal{L}(F^\phi(t);w)&=\int_{0}^{\infty}\int_{0}^{\infty}e^{-wt}\mathrm{Pr}\{S^\phi(x)\in\mathrm{d}t\}\mathrm{Pr}\{\tilde{\mathscr{T}}\in\mathrm{d}x\}\\
		&=\int_{0}^{\infty}e^{-x\phi(w)}\mathrm{Pr}\{\tilde{\mathscr{T}}\in\mathrm{d}x\}=\mathcal{L}(F(t);\phi(w)).
	\end{align*}
	Note $\phi(\cdot)$ and $\mathcal{L}(F(t);w)$ are regularly varying at infinity with index $\delta>0$ and  one, respectively. By using Proposition 1.5.7 of Bingham \textit{et al.} (1989), the function $\mathcal{L}(F^\phi(t);w)=\mathcal{L}(F(t);\phi(w))$ is regularly varying at infinity with index $\delta>0$. Thus, from (\ref{tblim}), we have
	\begin{equation*}
		\mathcal{L}(F^\phi(t);w)\sim F(1/\phi(w))\ \ \text{as}\ \ w\to\infty.
	\end{equation*}
	On using Tauberian theorem, the function $F^\phi(t)$ is regularly varying at zero with index $\delta$ and
	\begin{equation*}
		F^\phi(t)\sim\frac{1}{\Gamma(1+\delta)}F\bigg(\frac{1}{\phi(1/t)}\bigg)\ \ \text{as}\ \ t\to0.
	\end{equation*} 
	Hence, the proof follows on using (\ref{extprob}).
\end{proof}
\begin{remark}
	In the case of $\alpha$-stable subordinator, the Laplace exponent $\phi(\eta)=\eta^\alpha$, $\eta>0$, $\alpha\in(0,1)$ is regularly varying at infinity with index $\alpha$. Then, from Theorem \ref{thm5.2}, as $t\to0$, the distribution function of the extinction time for linear FBDP is regularly varying at zero with index $\alpha$. It can be approximated as follows: 
	\begin{equation}
		\mathrm{Pr}\{\tilde{\mathscr{T}}^\alpha\leq t\}\sim\frac{1}{\Gamma(1+\alpha)}\frac{\mu-\mu e^{-t^{-\alpha}(\mu-\lambda)}}{\mu-\lambda e^{-t^{-\alpha}(\mu-\lambda)}}\ \ \text{as}\ \ t\to0.
	\end{equation}
\end{remark}

\end{document}